\def\eu{\mathfrak}
\def\ma{\mathbb}
\def\mc{\mathcal}
\def\p{{\eu p}_{\infty}}
\def\f{{\ma F}_q^{\ast}}
\def\F{{\ma F}_q}
\def\P{\mathcal P}
\def\pK{\eu p}
\def\pL{\eu P}
\def\fin{\hfill\qed\bigskip}
\def\ge#1{#1_{\eu{gex}}}
\def\g#1{#1_{\eu{ge}}}
\def\*#1{#1^*}
\def\cicl#1#2{k(\Lambda_{{#1}^{#2}})}
\def\lra{\longrightarrow}
\def\poly{P_1^{\alpha_1}\cdots P_r^{\alpha_r}}
\def\Ku#1#2{k\big(\sqrt[#1]{#2}\big)}
\def\kum#1#2{\F\big(\sqrt[l^#1]{#2}\big)}
\def\rad#1#2{\sqrt[l^#1]{#2}}
\def\lra{\longrightarrow}
\def\H{{\mc H}}
\newcommand{\Irr}{\operatorname{Irr}}
\newcommand{\Gal}{\operatorname{Gal}}
\newcommand{\lcm}{\operatornamewithlimits{lcm}}
\newcommand{\mcd}{\operatorname{gcd}}
\newcommand{\Id}{\operatorname{Id}}
\newcommand{\im}{\operatorname{im}}
\newcommand{\N}{\operatorname{N}}
\newcommand{\Hom}{\operatorname{Hom}}
\newcommand{\xbinom}{\genfrac(){.5pt}0}
\newcommand{\ext}{\operatorname{ext}}
\newcounter{bean}
\newcounter{2bean}
\def\l{
\begin{list}
{\rm{(\alph{bean}).-}}{\usecounter{bean}
\setlength{\labelwidth}{0.8in}
\setlength{\labelsep}{0.3cm}
\setlength{\leftmargin}{1cm}}}
\def\las{\begin{list}
	{{\rm {(\arabic{2bean})}}}{\usecounter{2bean}
\setlength{\labelwidth}{0.8in}
\setlength{\labelsep}{0.3cm}
\setlength{\leftmargin}{1cm}}}
\numberwithin{equation}{section}
\newtheorem{theorem}{Theorem}[section]
\newtheorem{proposition}[theorem]{Proposition}
\newtheorem{lemma}[theorem]{Lemma}
\newtheorem{remark}[theorem]{Remark}
\newtheorem{definition}[theorem]{Definition}
\newtheorem{corollary}[theorem]{Corollary}
\title[Extended genus fields of abelian extensions of rational function fields]
{Extended genus fields of abelian extensions of rational function fields}
\author[J.C. Hernandez]{Juan Carlos Hernandez-Bocanegra}
\address{Departamento de Control Autom\'atico\\
Centro de Investigaci\'on y de Estudios Avanzados del I.P.N.}
\email{juan.cuencame@gmail.com, juanc.hernandez@cinvestav.mx}
\author[G. Villa]{Gabriel Villa--Salvador}
\address{Departamento de Control Autom\'atico\\
Centro de Investigaci\'on y de Estudios Avanzados del I.P.N.}
\email{gvillasalvador@gmail.com, gvilla@ctrl.cinvestav.mx}
\subjclass[2010]{Primary 11R58; Secondary 11R60, 11R29}
\keywords{Global fields, extended genus fields, abelian extensions}
\date{April 26th., 2024}
\begin{document}

\begin{abstract}

In this paper we obtain the extended genus field of a finite abelian
extension of a global rational function field. 
We first study the case of a cyclic extension of prime power
degree. Next, we use that
the extended genus fields of a composite of two cyclotomic extensions
of a global rational function field is equal to the composite
of their respective extended genus fields,
to obtain our main result. This result is that
the extended genus field of a general finite abelian extension of a
global rational function field,
is given explicitly in terms of the field and of the extended genus
field of its ``cyclotomic projection''.

\end{abstract}

\maketitle

\section{Introduction}\label{S1}

The concepts of genus field and extended (or narrow) genus field
depend on the respective concepts of Hilbert Class Field (HCF) and extended
(or narrow) Hilbert Class Field. The theory of the genus goes back
to Gauss. The HCF concept is much more recent.
The first to translate the theory of Gauss on genus to ``modern terms'',
was Hilbert. Nowadays
it may be used to study the ``easy'' part of the HCF
of a finite extension of the field of rational numbers.

The first to give a definition of a genus field for number
fields, was H. Hasse, who defined the genus field of a quadratic extension
of ${\ma Q}$. Since the genus field is related with the HCF,
one natural way to study genus fields is by means of class
field theory. However, we may study genus fields of abelian extensions
of the rational field by means of Dirichlet characters.

For number fields, the definition of Hilbert and extended Hilbert Class
Field are canonically given as the maximal abelian unramified and the maximal
abelian unramified at the finite primes of the field respectively. The definition of the
genus field is not absolute, as is the HCF, but depends on an extension. A.
Fr\"ohlich gave a general definition of genus fields for any number field.
Fr\"ohlich definition is also canonical.

We are interested in global function fields. In this context, there are several
different definitions of HCF of a global field $K$ depending on which aspect
we are interested in. In this paper we study the extended genus field of a finite
abelian extension $K/k$ where $k={\ma F}_q(T)$ is a global
rational function field. Let $\p$ be infinite prime of $k$. Then we define
HCF of $K$ as the maximal unramified abelian extension $K_H$ of $K$ such
that the infinite primes of $K$ (those above $\p$) decompose fully
in $K_H$. B. Angl\`es and J.-F. Jaulent (\cite{AnJa2000}) give the
same concept by means of the id\`ele norm subgroup
corresponding to $K_H$.
They also define the extended HCF of any global field $K$ by means
of the norm subgroup of $K_{H^+}$ in the id\`ele group $J_K$ of $K$.
We use the definitions of Angl\`es and Jaulent of $K_H$ and $K_{H^+}$
to define the genus $\g K$ and the extended genus $\ge K$ fields
of $K$ with respect to the extension $K/k$ and compare our findings
with the concepts of extended genus fields given in \cite{RaRzVi2019}
and in \cite{Cle92}. 

We first study the case of a cyclic extension of $k$ of prime power
degree $l^n$. We consider four possible type of primes $l$: (1) $l=
p$, the characteristic of $k$ (Artin-Schreier-Witt case);
(2) $l\nmid q-1$, $l\neq p$; (3) $l^n|q-1$ (Kummer case),
and (4) $l^{\rho}|q-1$, $1\leq \rho<n$ and $l^n\nmid q-1$ (``semi-Kummer''
case).
Our main results are Theorems \ref{TExtendido5.19} and \ref{TEx2.2}.

The main tools used in this paper are the Carlitz theory of cyclotomic
function fields and class field theory, particularly the concepts of HCF
and genus fields developed by Angl\`es and Jaulent.

\section{Notations and general results}\label{S2}

For the general Carlitz--Hayes 
theory of cyclotomic function fields, we
refer to \cite[Ch. 12]{Vil2006} and \cite[Cap. 9]{RzeVil2017}.
For the results on genus fields of function fields we
refer to \cite{BaMoReRzVi2018,MaRzVi2013,MaRzVi2017}
 and \cite[Cap. 14]{RzeVil2017}.

We will be using the following notation.
Let $k=\F(T)$ be a global rational function field,
where $\F$ is the finite field of $q$ elements. Let
$R_T=\F[T]$ and let $R_T^+$ denote the set of the monic
irreducible elements of $R_T$. For $N\in R_T$, $\cicl N{}$ denotes
the $N$--th cyclotomic function field where $\Lambda_N$ is
the $N$--th torsion of the Carlitz module. For a $D\in R_T$ we
define $\*D:=(-1)^{\deg D} D$.

We will call a field $F$ a {\em cyclotomic function field} 
if there exists $N\in R_T$ such that $F\subseteq \cicl N{}$.

Let $N\in R_T$. The Dirichlet characters $\chi\bmod N$ are
the group homomorphisms $\chi\colon (R_T/\langle N\rangle)^*
\lra \*{\ma C}$. Given a group $X$ of Dirichlet characters
modulo $N$, the {\em field associated to $X$} is the fixed field $F=
\cicl N{}^H$ where $H=\bigcap_{\chi\in X}\ker \chi$. We say that $F$
corresponds to the group $X$ and that $X$ corresponds to $F$.
We have that $X\cong \Hom(\Gal(F/k),\*{\ma C})$. When $X$ is a
cyclic group generated by $\chi$, we have that the field associated
to $X$ is equal to $F=\cicl N{}^{\ker \chi}$ and we say that $F$
corresponds to $\chi$.

Given a cyclotomic function field $F$ with Dirichlet group characters $X$,
we have that the ramification index of $P\in R_T^+$ in $F/k$ equals
$|X_P|$ where $X_P=\{\chi_P\mid \chi\in X\}$ and $\chi_P$ is the
$P$--th component of $\chi$, see \cite{MaRzVi2013}. 
The maximum cyclotomic extension of $F$ unramified at the finite prime
divisors is the field that corresponds to $Y:=\prod_{P\in R_T^+}
X_P$ This field
is denoted as $\ge F$.

We denote the infinite prime of $k$ by $\p$. That is, $\p$ is the pole
divisor of $T$ and $1/T$ is an uniformizer for $\p$.

Given a finite extension $K/k$ and a definition of HCF $K_H$ and
of extended HCF $K_{H^+}$ of $K$, the respective genus and
extended genus field of $K$ with respect to $k$ are the
extensions $KL$ such that $L$ is the maximal abelian extension
of $k$ contained in $K_H$ and in $K_{H^+}$, respectively.

We will use both notations: $e_{P}(F|k)$ or $e_{F/k}(P)$ to denote
the ramification index of the prime $P$ of $k$ in $F$. For the
place $\p$ we use the notation $e_{\infty}(F|k)$.

When $K/k$ is a finite abelian extension, it follows from
the Kronecker--Weber Theorem that 
there exist $N\in R_T$, $n\in{\ma N}\cup\{0\}$ and $m\in{\ma N}$
such that $K\subseteq {_n\cicl N{}_m}$, where, for any $F$, $F_m:=F
{\ma F}_{q^m}(T)$, for any 
$N\in R_T$, ${_n\cicl N{}}:= L_n\cicl N{}$, 
and $L_n$ is the maximum subfield of $\cicl {1/T^n}{}$
where $\p$ is totally and wildly ramified.
Then we define 
\begin{gather}\label{EcExtended1}
E:={\mc M} K\cap \cicl N{}
\end{gather}
where ${\mc M}=L_nk_m$. Then
$\g K=\g E^{\H} K$ where $\H$ is the decomposition group of
the infinite primes in $K\g E/K$ (see \cite[Theorem 2.2]{BaMoReRzVi2018}).
The group $\H$ is also the decomposition group of the infinite
primes of $K$ in $KE/K$.

For any global function field $L$, ${\ma P}_L$ denotes the set of
all places of $L$.

 For $x\in{\ma Z}$,
$v_l(x)$ denotes the valuation of $x$ at $l$. That is, $v_l(x)=\gamma$
if $l^{\gamma}|x$ and $l^{\gamma+1}\nmid x$. We write $v_l(0)=\infty$.

\section{Basic results}\label{ExtendedS2+1}

One result on ramification of tamely ramified extensions,
used frequently, is the following theorem.

\begin{theorem}[Abhyankar's Lemma]\label{T2.1}
Let $L/K$ be a separable of global function fields. Assume that
$L=K_1K_2$ with $K\subseteq K_i\subseteq L$, $1\leq i\leq 2$. Let
$\pK$ be a prime divisor of $K$ and $\pL$ a prime divisor in $L$
above $\pK$. Let $\pL_i:=\pL\cap K_i$, $i=1,2$. If at least one of the
extensions $K_i/K$ is tamely ramified at $\pK$, then
\[
e_{L/K}(\pL|\pK)=\lcm[e_{K_1/K}(\pL_1|\pK),e_{K_2/K}(\pL_2|\pK)],
\]
where $e_{L/K}(\pL|\pK)$ denotes the ramification index. $\fin$
\end{theorem}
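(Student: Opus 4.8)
The plan is to pass to the completions at the primes involved and reduce the statement to a purely local assertion about ramification indices. First I would localize: replace $K$ by its completion $K_{\pK}$, and $K_i$ by the completion $(K_i)_{\pL_i}$ (a well-defined local field once we fix the prime $\pL$ of $L$ above $\pK$ and set $\pL_i = \pL \cap K_i$). Since completion commutes with compositum in this setting, $L_{\pL} = (K_1)_{\pL_1}(K_2)_{\pL_2}$, and the ramification indices are unchanged. Thus it suffices to prove: if $K$ is a complete discretely valued field with perfect (here finite) residue field, $L = K_1 K_2$ a finite separable extension, and at least one $K_i/K$ is tamely ramified, then $e(L/K) = \lcm[e(K_1/K), e(K_2/K)]$.

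The key steps in the local setting are as follows. Write $e_i = e(K_i/K)$ and say $K_1/K$ is tame, so $p \nmid e_1$ where $p = \operatorname{char}$. Adjoin to $K$ a root $\pi^{1/e_1}$ of a uniformizer $\pi$; because $e_1$ is prime to $p$ and the residue field is finite (hence contains enough roots of unity after a tamely ramified base change, which does not affect the computation), the extension $K(\pi^{1/e_1})/K$ is totally tamely ramified of degree exactly $e_1$, and standard tame-ramification theory shows $K_1 \subseteq K(\pi^{1/e_1})$. One then checks that $K_1 \cdot K(\pi^{1/e_1}) = K(\pi^{1/e_1})$ so the compositum with $K_2$ satisfies $K_1 K_2 \subseteq K(\pi^{1/e_1}) K_2$, and conversely the relevant ramification is captured. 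The divisibility $e_i \mid e(L/K)$ is automatic from multiplicativity of $e$ in towers $K \subseteq K_i \subseteq L$, giving $\lcm[e_1,e_2] \mid e(L/K)$. For the reverse inequality one uses that $K(\pi^{1/e_1})\cdot K_2$ has ramification index dividing $\lcm[e_1, e_2]$: the extension $K(\pi^{1/e_1})/K$ being generated by a radical of a uniformizer, over $K_2$ it becomes $K_2(\pi^{1/e_1})$, whose ramification index over $K_2$ divides $e_1/\gcd(e_1, e_2)$ — because a uniformizer of $K$ already becomes an $e_2$-th power times a unit in $K_2$ — so $e(K_2(\pi^{1/e_1})/K) \mid \operatorname{lcm}[e_1,e_2]$, and $L$ sits inside this field.

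I expect the main obstacle to be the bookkeeping around roots of unity and the precise statement that $K_1$ embeds into a Kummer-type radical extension $K(\pi^{1/e_1})$: this requires the residue field to contain the $e_1$-th roots of unity, which one arranges by an auxiliary unramified base change (harmless since unramified extensions do not change $e$), together with the classification of tamely ramified extensions of a local field as subextensions of $K^{\mathrm{ur}}(\pi^{1/e})$. The argument that $L \subseteq K(\pi^{1/e_1})K_2$ at the level of ramification, and the final $\gcd$ computation for the ramification index of a radical extension over $K_2$, are then elementary valuation-theoretic manipulations. Since this is a classical result, in the paper we simply cite a standard reference (e.g. \cite[Ch. 12]{Vil2006} or \cite[Cap. 9]{RzeVil2017}) rather than reproducing the argument.
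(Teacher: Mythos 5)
The paper states Abhyankar's Lemma without proof, treating it as a classical result (the theorem environment closes with the QED box and no argument follows), which is exactly what your final remark anticipates. Your sketch --- completing at $\pK$ and $\pL$, reducing to the local statement, embedding the tame extension into a radical extension $K(\pi^{1/e_1})$ after a harmless unramified base change, and computing $e\bigl(K_2(\pi^{1/e_1})/K_2\bigr)$ via the $\gcd$ of value groups --- is the standard proof and is sound in outline, so there is nothing to reconcile with the paper.
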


Next, we present some basic facts on finite cyclic groups
and we apply them to the case of a finite field.

Let $G$ be a cyclic group of order $n$, say $G=\langle
a\rangle$. Let $\Lambda_m$ the unique subgroup of $G$
or order $m$ where $m|n$. We have $\Lambda=\langle
a^{n/m}\rangle$. Let $t\in {\mathbb N}$ and let $G^t:=
\{x\in G\mid x=y^t \text{\ for some $y\in G$}\}$. We have 
$G^t=\im \varphi_t$, where $\varphi_t\colon G\to G$ is
given by $\varphi_t(x)=x^t$.

Note that if $t\in{\mathbb N}$ and $d=\gcd(t,n)$, then $G^d=
G^t$, namely, if $\alpha,\beta\in {\mathbb Z}$
are such that $\alpha t+\beta n=d$, we have
\[
G^d=G^{\alpha t+\beta n}=(G^{\alpha})^t(G^n)^{\beta}\subseteq
G^t\cdot 1=G^t.
\]
Conversely, let $t=\kappa d$ with $\kappa\in{\mathbb N}$. Then
$G^t=(G^{\kappa})^d\subseteq G^d$.

We also have that $\Lambda_d=\Lambda_t$ since if $x\in
\Lambda_t$, then $x^t=1=(x^t)^{\alpha}\cdot(x^n)^{\beta}=
x^{\alpha t+\beta n}=x^d$ so that $\Lambda_t\subseteq
\Lambda_d$. Conversely, if $t=\kappa d$ and if 
$x\in\Lambda_d$ then $1=x^t=(x^d)^{\kappa}=1^{\kappa}=1$.

Now, if $d|n$, then $G^d=\Lambda_{n/d}$ because we have
the exact sequence
\[
1\lra \Lambda_d\lra G\stackrel{\varphi_d}{\lra}G^d\lra 1,
\]
obtaining $|G^d|=\frac{|G|}{|\Lambda_d|}=\frac nd=
|\Lambda_{n/d}|$ and, if $x\in G^d$, there exists $y\in G$ such that
$x=y^d$ that implies $x^{n/d}=(y^d)^{n/d}=y^n=1$ so that
$G^d\subseteq \Lambda_{n/d}$. Thus $G^d=\Lambda_{n/d}$.

We apply the previous basic results to the multiplicative groups
of the finite field $\f$ that is a cyclic group of $q-1$ elements.

\begin{lemma}\label{LEx1.4}
Let $l$ be a prime number and $n\in{\mathbb N}$, with $l^n|
q-1$. Let $F:=\kum{n}{\beta}$ with $\beta\in \f$. Then $F=
{\mathbb F}_{q^{l^s}}$ for some $0\leq s\leq n$.
\end{lemma}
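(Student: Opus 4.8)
The plan is to show that $F = \kum{n}{\beta}$ is obtained from $\F$ by adjoining an $l^n$-th root of an element of $\f$, and that such a radical extension of a finite field is itself a finite field, necessarily of the form ${\ma F}_{q^{l^s}}$. First I would observe that $F/k$ is a Kummer extension: since $l^n \mid q-1$, the field $\F$ contains a primitive $l^n$-th root of unity, so $k = \F(T)$ contains $\mu_{l^n}$, and $F = k(\sqrt[l^n]{\beta})$ with $\beta \in \f \subseteq k^*$. The key point is that $\beta$ is a \emph{constant}, so the only ramified prime possible in $F/k$ is the constant field behaviour — in fact $F/k$ is unramified at every prime of $k$, including $\p$, because $\beta$ is a unit at every place. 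A finite extension of $k = \F(T)$ that is unramified everywhere must be a constant field extension; hence $F = {\ma F}_{q'}(T)$ for some finite extension ${\ma F}_{q'}$ of $\F$.

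Next I would pin down $q'$. Write $[F:k] = d$; then $d \mid l^n$ because $F = k(\sqrt[l^n]{\beta})$ and the degree of a Kummer extension $k(\sqrt[l^n]{\beta})/k$ equals the order of $\beta$ in $k^*/(k^*)^{l^n}$, which divides $l^n$. Since $l$ is prime, $d = l^s$ for some $0 \le s \le n$. Now apply the basic group-theory facts established just before the lemma to $G = \f$, a cyclic group of order $q-1$: the image of $\beta$ in $\f/(\f)^{l^n}$ has order $l^s$ exactly when $v_l(\ord(\beta)) = s$ after accounting for the $l^n$-quotient, and in all cases the extension generated is the unique degree-$l^s$ subextension of the cyclotomic/constant tower over $\F$, which is ${\ma F}_{q^{l^s}}$. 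So $F = {\ma F}_{q^{l^s}}(T)$, and identifying $F$ with its constant field (as the lemma statement does) gives $F = {\ma F}_{q^{l^s}}$.

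The main obstacle I anticipate is the careful bookkeeping in the second step: one must relate the Kummer degree $[k(\sqrt[l^n]{\beta}):k]$ to the order of $\beta$ in the cyclic group $\f$ modulo $l^n$-th powers, and then verify that the resulting field is \emph{exactly} ${\ma F}_{q^{l^s}}$ rather than merely an extension of that degree — i.e., that the radical $\sqrt[l^n]{\beta}$ lies in the constant field. This follows because $\sqrt[l^n]{\beta}$ is integral over $\F$ (it satisfies $X^{l^n} - \beta = 0$ with $\beta \in \f$), hence algebraic over $\F$, hence lies in $\overline{\F} \cap F$, which is the constant field of $F$; combined with the degree count this forces the constant field to be ${\ma F}_{q^{l^s}}$ and $F$ to equal it. The remaining verifications — that $\mu_{l^n} \subseteq \F$ and that the everywhere-unramified extensions of $\F(T)$ are precisely the constant extensions — are standard and I would cite them rather than reprove them.
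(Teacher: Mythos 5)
Your conclusion is right, but you have misread the statement: in this paper's notation, $\kum{n}{\beta}$ denotes $\F\big(\sqrt[l^n]{\beta}\big)$, the \emph{finite field} $\F$ with the radical adjoined, not the function field $k\big(\sqrt[l^n]{\beta}\big)$. The lemma is purely a statement about finite fields, and the conclusion $F={\ma F}_{q^{l^s}}$ is literal rather than an ``identification of $F$ with its constant field.'' Your reading happens to be harmless, because $k\big(\sqrt[l^n]{\beta}\big)=\F\big(\sqrt[l^n]{\beta}\big)(T)$ is \emph{automatically} a constant field extension of $k$ (the generator is algebraic over $\F$), so computing its constant field is equivalent to the lemma --- but for the same reason the ramification-theoretic part of your argument (``$\beta$ is a unit everywhere, hence $F/k$ is everywhere unramified, hence a constant extension'') is machinery that does no work here.

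Once that detour is stripped away, what remains is a genuinely different and legitimate route: since $l^n\mid q-1$ forces $l\neq p$ and $\mu_{l^n}\subseteq\F$, Kummer theory gives that $[\F(\sqrt[l^n]{\beta}):\F]$ equals the order of $\beta$ in $\f/(\f)^{l^n}$, a divisor of $l^n$, hence $l^s$ with $0\le s\le n$, and ${\ma F}_{q^{l^s}}$ is the unique extension of $\F$ of that degree. The paper instead argues by hand: it sets $\mu=\sqrt[l^n]{\beta}$, takes $s$ minimal with $\mu^{l^s}=\theta\in\f$, and shows via the Galois action on the roots $\zeta_{l^s}^j\mu$ that $X^{l^s}-\theta$ is irreducible over $\F$, i.e., it reproves exactly the piece of Kummer theory it needs. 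Your approach is shorter if one is willing to quote Kummer theory; the paper's buys an explicit description of $s$ (the minimal exponent with $\mu^{l^s}\in\f$), which is what Remark \ref{REx1.5}, Corollary \ref{CEx1.6} and Theorem \ref{TEx1.6} run on. Your corresponding clause about ``$v_l(\ord(\beta))$ after accounting for the $l^n$-quotient'' is the one place you would need to make precise (via the cyclic-group computations preceding the lemma, which give $\beta\in(\f)^{l^{n-s}}\setminus(\f)^{l^{n-s+1}}$) if you want the later statements to follow from your version.
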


\begin{proof}
Let $\mu=\sqrt[l^n]{\beta}$. Then $\mu^{l^n}=\beta\in \f$. Set
$s$, $0\leq s\leq n$ to be the 
minimal non-negative integer such that $\mu^{l^s}=\theta\in\f$.
If $s=0$, then $\mu=\theta\in\f$ and $f(X)=X^{l^s}-\theta
=X-\theta$ is irreducible.

For any $s$, we will see that $f(X)=X^{l^s}-\theta$ is an
irreducible polynomial. We have $f(X)=X^{l^s}-\theta=
\prod_{j=1}^{l^s}(X-\zeta_{l^s}^j \mu)$, where $\zeta_m$ denotes
a primitive $m$-th root of unity. Let ${\mc G}:=\Gal
(\F(\mu)/\F)$. Let $\sigma\in{\mc G}$, $\sigma\neq
\Id$ and let $\sigma(\mu)=
\zeta_{l^s}^j\mu$, where $j=j_0l^b$, with $\mcd(j_0,l)=1$. We
choose an element $\sigma\in {\mc G}$ such that $b$ is minimal.

We have $\sigma(\mu)=\zeta_{l^s}^{j_0l^b}\mu=\zeta_{l^{s-b}}^{j_0}
\mu$. Let $i_0\in{\mathbb Z}$ be such that $j_0i_0\equiv 1
\bmod l^s$. Then $\sigma^{i_0}(\mu)=\zeta_{l^{s-b}}^{j_0i_0}(\mu)=
\zeta_{l^{s-b}}\mu$. This ${\mc G}$ is a cyclic group and of
order $l^{s-b}$ and
\[
\prod_{\varepsilon=1}^{l^{s-b}} (X-\zeta_{l^{s-b}}^{\varepsilon}\mu)
= X^{l^{s-b}}-\mu^{l^{s-b}}\in\F[X].
\]
Hence $\mu^{l^{s-b}}\in\f$. Therefore $b=0$, $|{\mc G}|=l^s$,
and $X^{l^s}-\theta=\Irr(\mu,X,\F)$ is irreducible.

It follows that $[\F(\mu):\F]=|{\mc G}|=l^s$ and $F=\F(\mu)=
\F(\sqrt[l^n]{\beta})=\F(\sqrt[l^s]{\theta})={\mathbb F}_{l^s}$.
\end{proof}

\begin{remark}\label{REx1.5}{\rm{
We have $\F(\sqrt[l^n]{\beta})={\mathbb F}_{q^{l^s}}$, where $s$
is the minimal non-negative integer such that $\mu^{l^s}\in \f$,
with $\mu=\sqrt[l^n]{\beta}$.
}}
\end{remark}

\begin{corollary}\label{CEx1.6}
With the above notations, if $m\in{\mathbb N}$, we have
$[\F(\sqrt[l^{n+m}]{\beta}):\F]=l^{s+m}$.
\end{corollary}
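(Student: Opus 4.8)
The plan is to run the Galois-theoretic argument in the proof of Lemma~\ref{LEx1.4} once more, now for an $l^{n+m}$-th root of $\beta$, and to organize the degree computation through the tower $\F\subseteq\F(\mu)\subseteq\F(\nu)$, where $\mu:=\sqrt[l^n]{\beta}$ and $\nu:=\sqrt[l^{n+m}]{\beta}$ is taken compatibly, i.e.\ $\nu^{l^m}=\mu$. The bottom step is already in hand: by Lemma~\ref{LEx1.4} and Remark~\ref{REx1.5}, $\F(\mu)=\ma F_{q^{l^s}}$, $[\F(\mu):\F]=l^s$, and its proof identifies $\Irr(\mu,X,\F)=X^{l^s}-\theta$ with $\theta:=\mu^{l^s}\in\f$, where $s$ is least with $\mu^{l^s}\in\f$. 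Since degrees multiply in towers, the corollary reduces to the single statement $[\F(\nu):\F(\mu)]=l^m$; once that degree is known, $\F(\nu)$, having degree $l^{s+m}$ over $\F$, must be $\ma F_{q^{l^{s+m}}}$, in accordance with Remark~\ref{REx1.5}.

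For $[\F(\nu):\F(\mu)]=l^m$ I would either apply the computation of Lemma~\ref{LEx1.4} verbatim over the base field $\ma F_{q^{l^s}}=\F(\mu)$ to the element $\nu$, which satisfies $\nu^{l^m}=\mu\in\ma F_{q^{l^s}}^{\ast}$, or, more safely, induct on $m$ and adjoin a single $l$-th root at each stage over $\ma F_{q^{l^{s+j}}}$; the latter is always legitimate because $l\mid l^n\mid q-1$ forces the $l$-th roots of unity into $\F$ and hence into every $\ma F_{q^{l^{s+j}}}$. In either formulation the Galois group of the step is cyclic, the ratio $\sigma(\nu)/\nu$ of a generator $\sigma$ is a root of unity of $l$-power order, and forming the product of the conjugates of $\nu$ over the base field gives a polynomial of the form $X^{l^{m-b}}-\nu^{l^{m-b}}$ (resp.\ $X^{l}-\nu^{l}$ at a single inductive stage) lying in the base field; hence $\nu^{l^{m-b}}$ lies in the base, and provided this exponent cannot be lowered one obtains $b=0$ and the desired degree $l^m$.

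The main obstacle is precisely that last proviso: that $\mu=\sqrt[l^n]{\beta}$ is \emph{not} an $l$-th power in the field $\F(\mu)=\ma F_{q^{l^s}}$ it generates --- equivalently, that $l^m$, and no smaller power of $l$, is the least exponent $l^v$ with $\nu^{l^v}\in\ma F_{q^{l^s}}^{\ast}$, which is exactly what the step $\F(\mu)\subseteq\F(\nu)$ needs. When $s\ge 1$ this propagates from the minimality of $s$: if $\mu=\rho^{l}$ with $\rho\in\ma F_{q^{l^s}}^{\ast}$, then $\F(\rho)=\F(\mu)=\ma F_{q^{l^s}}$, and applying Lemma~\ref{LEx1.4} to $\rho$ (an $l^{s+1}$-th root of $\theta$) together with $[\F(\rho):\F]=l^s$ forces $\rho^{l^s}\in\f$ while $\rho^{l^{s-1}}\notin\f$, so that $\mu^{l^{s-1}}=\rho^{l^s}\in\f$, contradicting the choice of $s$ (the boundary case $s=n$, where $l^{s+1}\mid q-1$ may fail, is handled by a direct argument showing $\beta$ would then itself be an $l$-th power in $\f$). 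When $s=0$ the claim is exactly the irreducibility of $X^{l^m}-\mu$ over $\F$; this is the point at which the standing hypotheses on $\beta$ enter, and once it is granted the orbit argument of Lemma~\ref{LEx1.4} applies without change to conclude. Everything else --- multiplicativity of degrees in the tower, cyclicity of the Galois groups, and the final identification $\F(\nu)=\ma F_{q^{l^{s+m}}}$ from its degree over $\F$ --- is routine.
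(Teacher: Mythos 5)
Your tower decomposition $\F\subseteq\F(\mu)\subseteq\F(\nu)$ is organized differently from the paper's proof, which simply takes $\delta=\sqrt[l^{n+m}]{\beta}$ with $\delta^{l^m}=\mu$, notes $\delta^{l^{m+s}}=\theta\in\f$, asserts that $m+s$ is the minimal such exponent, and invokes Remark~\ref{REx1.5}; and you have correctly located the crux, namely that everything reduces to $\mu$ not being an $l$-th power in $\F(\mu)={\ma F}_{q^{l^s}}$. The problem is that the two cases you defer --- $s=0$, and the boundary case $s=n$ with $l^{n+1}\nmid q-1$ --- are left unproved, and they cannot be filled in, because the corollary as literally stated fails there. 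Take $q=3$, $l=2$, $n=1$, $\beta=-1$, $m=1$: here $\mu=\sqrt{-1}$ generates ${\ma F}_9$, so $s=1=n$ and $l^{s+1}=4\nmid q-1$; the element $\mu$ \emph{is} a square in ${\ma F}_9$ (it equals $g^2$ for a generator $g$ of ${\ma F}_9^{\ast}$, i.e.\ a primitive $8$-th root of unity), every fourth root of $-1$ is a primitive $8$-th root of unity and already lies in ${\ma F}_9$, and $[\F(\sqrt[4]{-1}):\F]=2\neq l^{s+m}=4$. Since $\beta=-1$ is not a square in ${\ma F}_3^{\ast}$, the ``direct argument showing $\beta$ would then itself be an $l$-th power in $\f$'' that you appeal to does not exist. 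Similarly, for $s=0$ there is no standing hypothesis that makes $X^{l^m}-\mu$ irreducible: for $\beta=1$, or more generally $\beta\in(\f)^{l^{n+1}}$, the degree drops.

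To be fair, the paper's own one-line proof is no better at these points: ``clearly $m+s$ is minimal'' is false when $s=0$ and $\mu\in(\f)^{l}$, and even where minimality does hold, passing from the minimal exponent to the degree uses Lemma~\ref{LEx1.4} for an $l^{n+m}$-th root, which requires $l^{n+m}\mid q-1$ --- this is exactly what breaks in the $q=3$ example, where the minimal exponent of $\delta=\zeta_8$ is $2$ but its degree over ${\ma F}_3$ is $2$, not $2^2$. Under the additional hypothesis $l^{n+m}\mid q-1$ (together with $\beta\notin(\f)^{l^{n+1}}$ in the case $s=0$) the corollary is correct; your $s\geq 1$ argument then closes, since $l^{s+1}\mid q-1$ is automatic, and indeed the whole statement follows at once from Theorem~\ref{TEx1.6} applied with exponents $n$ and $n+m$. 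So the repair is an added hypothesis relating $m$ to $v_l(q-1)$, not a missing auxiliary argument.
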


\begin{proof}
Set $\delta=\sqrt[l^{n+m}]{\beta}$, then $\delta^{l^m}=\mu=
\sqrt[l^n]{\beta}$, and $\mu^{l^s}=\delta^{l^{m+s}}=\theta
\in\f$. Clearly $m+s$ is minimal.
\end{proof}

We consider $\mu=\sqrt[l^n]{\beta}$, with $\beta\in \f$ and
$[\F(\sqrt[l^n]{\beta}:\F]=l^s$ for some $0\leq s\leq n$. Set
$\mu^{l^s}=\theta\in\f$, $\beta=\mu^{l^n}=(\mu^{l^s})^{l^{n-s}}
=\theta^{l^{n-s}}$. Hence $\beta\in
G^{l^{n-s}}$, where $G=\f$.

In case that there would exist $\varepsilon\in \f$ such that
$\beta={\varepsilon}^{l^{n-s+1}}$ it would imply that $\mu^{
l^n}=\varepsilon^{l^{n-s+1}}$ and thus $\mu=(\varepsilon^{
l^{n-s+1}})^{1/l^n}=\varepsilon^{l^{n-s+1-n}}=\varepsilon^{
l^{-s+1}}=\sqrt[l^{s-1}]{\varepsilon}$. Therefore $X^{l^{s-1}}
-\varepsilon$ has $\mu$ as a root. It would follow that
$[\F(\mu):\F]\leq l^{s-1}<l^s$ contrary to our hypothesis,
Therefore $\beta\in G^{l^{n-s}}\setminus G^{l^{n-s+1}}$.

Conversely, if  $\beta\in G^{l^{n-s}}\setminus G^{l^{n-s+1}}$,
then $\beta=\kappa^{l^{n-s}}$ where $\kappa$ in not an
$l$-power. Therefore
\[
\mu^{1/l^n}=(\kappa)^{l^{n-s}})^{1/l^n}=\kappa^{l^{-s}}=
\sqrt[l^s]{\kappa},
\]
hence $\F(\sqrt[l^n]{\mu})\subseteq {\mathbb F}_{q^{l^s}}$.
In case that $\F(\sqrt[l^n]{\mu})\subseteq {\mathbb F}_{
q^{l^{s-1}}}$ it would follow that $\mu^{l^{s-1}}\in\f$
contrary to our hypothesis.

We have proved:

\begin{theorem}\label{TEx1.6}
We have that $[\F(\sqrt[l^n]{\beta}):\F]=l^s$ if and only if
$\beta\in ({\f})^{l^{n-s}}\setminus (\f)^{l^{n-s+1}}$. $\fin$
\end{theorem}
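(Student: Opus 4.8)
The plan is to read the theorem off Lemma~\ref{LEx1.4} and Remark~\ref{REx1.5}. Those results already say that, setting $\mu:=\sqrt[l^n]{\beta}$, one has $[\F(\mu):\F]=l^s$ where $s$ is the \emph{least} non-negative integer with $\mu^{l^s}\in\f$; so the task reduces to showing that this minimal exponent $s$ is characterized by $\beta\in(\f)^{l^{n-s}}\setminus(\f)^{l^{n-s+1}}$. Before anything else I would record the observation used throughout: since $l^n\mid q-1$, every $l^n$-th root of unity already lies in $\F$, hence the field $\F(\mu)$ and the integer $s$ do not depend on which $l^n$-th root of $\beta$ is chosen, and more generally any two elements of an algebraic closure with the same $l^n$-th power over $\F$ generate the same extension of $\F$.

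For the implication ``$\Rightarrow$'' I would put $\theta:=\mu^{l^s}\in\f$ and note $\beta=\mu^{l^n}=\theta^{l^{n-s}}$, so $\beta\in(\f)^{l^{n-s}}$. To see $\beta\notin(\f)^{l^{n-s+1}}$ I argue by contradiction: if $\beta=\varepsilon^{l^{n-s+1}}$ with $\varepsilon\in\f$, then $\bigl(\mu^{l^{s-1}}\varepsilon^{-1}\bigr)^{l^{n-s+1}}=\mu^{l^n}\varepsilon^{-l^{n-s+1}}=1$, so $\mu^{l^{s-1}}\varepsilon^{-1}$ is an $l^{n-s+1}$-th root of unity; as $l^{n-s+1}\mid l^n\mid q-1$ it lies in $\F$, forcing $\mu^{l^{s-1}}\in\f$ and contradicting the minimality of $s$.

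For ``$\Leftarrow$'' I suppose $\beta\in(\f)^{l^{n-s}}\setminus(\f)^{l^{n-s+1}}$ and write $\beta=\kappa^{l^{n-s}}$ with $\kappa\in\f$. First, $\kappa$ is not an $l$-th power in $\f$, since $\kappa=\lambda^l$ would give $\beta=\lambda^{l^{n-s+1}}\in(\f)^{l^{n-s+1}}$. Choosing $\nu$ with $\nu^{l^s}=\kappa$, I get $\nu^{l^n}=\kappa^{l^{n-s}}=\beta$, so by the preliminary observation $\F(\mu)=\F(\nu)=\F(\sqrt[l^s]{\kappa})$. Finally, applying Lemma~\ref{LEx1.4} and Remark~\ref{REx1.5} to $\F(\sqrt[l^s]{\kappa})$ (legitimate since $l^s\mid l^n\mid q-1$), its degree over $\F$ is $l^{s'}$ with $s'$ minimal such that $\nu^{l^{s'}}\in\f$; here $s'\le s$ because $\nu^{l^s}=\kappa\in\f$, and $s'<s$ is impossible, as it would make $\kappa=(\nu^{l^{s-1}})^{l}$ an $l$-th power. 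Hence $[\F(\mu):\F]=l^s$. (The extreme case $s=0$, in which the two arguments above have nothing to contradict, is the elementary statement that $[\F(\mu):\F]=1$ precisely when $\beta$ is an $l^n$-th power in $\f$.)

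The step I expect to be the only genuinely delicate one is the passage from ``$\mu^{l^n}=\varepsilon^{l^{n-s+1}}$'' to ``$\mu^{l^{s-1}}\in\f$'' (and its mirror image in the converse): this is not a formal cancellation of exponents — the expressions $\varepsilon^{1/l^{s}}$, $\kappa^{1/l^s}$ etc. are only meaningful up to roots of unity — and it relies essentially on the hypothesis $l^n\mid q-1$, which is exactly what guarantees that the intervening $l^{n-s+1}$-th root of unity sits inside the base field $\F$. Once this is handled carefully, everything else is a routine application of the cyclic-group exact sequences recorded before Lemma~\ref{LEx1.4} together with Lemma~\ref{LEx1.4} itself.
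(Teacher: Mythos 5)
Your proof is correct and follows essentially the same route as the paper: both directions rest on writing $\beta=\theta^{l^{n-s}}$ with $\theta=\mu^{l^s}$ and on the minimality of $s$ supplied by Lemma~\ref{LEx1.4} and Remark~\ref{REx1.5}. The only difference is that where the paper formally cancels exponents (writing $\mu=(\varepsilon^{l^{n-s+1}})^{1/l^n}=\varepsilon^{l^{-s+1}}$), you justify that step by observing that $\mu^{l^{s-1}}\varepsilon^{-1}$ is an $l^{n-s+1}$-th root of unity lying in $\F$ because $l^n\mid q-1$ --- a more careful rendering of the same argument.
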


A basic result on cyclic groups of prime power degree
that we need is the following.

\begin{proposition}\label{PExtended5.4.1}
Let $G$ be a cyclic group of order $l^{\tau}$ with $l$ a prime
number. Given $H_1,H_2 < G$, then $H_1\subseteq H_2$ or
$H_2\subseteq H_1$. In particular $H_1\cap H_2=H_j$ with
$j=1$ or $j=2$ and if $H_1\neq \{\Id\}$ and $H_2\neq \{\Id\}$,
then $H_1\cap H_2\neq \{\Id\}$.
\end{proposition}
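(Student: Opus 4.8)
The plan is to reduce everything to the well-known description of the subgroup lattice of a finite cyclic group. Recall from the computations preceding Lemma~\ref{LEx1.4} that in a cyclic group $G=\langle a\rangle$ of order $n$, for every divisor $d\mid n$ the subgroup $G^{d}=\Lambda_{n/d}=\langle a^{d}\rangle$ is the unique subgroup of $G$ of order $n/d$; equivalently, for each $m\mid n$ there is exactly one subgroup of order $m$, namely $\Lambda_{m}=\langle a^{n/m}\rangle$. Applying this with $n=l^{\tau}$: every subgroup of $G$ has order $l^{a}$ for some $0\le a\le\tau$, so we may write $|H_{1}|=l^{a_{1}}$, $|H_{2}|=l^{a_{2}}$ with $0\le a_{1},a_{2}\le\tau$, and by the uniqueness just recalled $H_{i}=\Lambda_{l^{a_{i}}}=\langle a^{l^{\tau-a_{i}}}\rangle$ for $i=1,2$.

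Next I would compare $a_{1}$ and $a_{2}$. Without loss of generality assume $a_{1}\le a_{2}$. Then $l^{\tau-a_{1}}=l^{\tau-a_{2}}\cdot l^{a_{2}-a_{1}}$ with $l^{a_{2}-a_{1}}\in{\mathbb N}$, hence
\[
a^{l^{\tau-a_{1}}}=\big(a^{l^{\tau-a_{2}}}\big)^{l^{a_{2}-a_{1}}}\in\langle a^{l^{\tau-a_{2}}}\rangle=H_{2},
\]
and therefore $H_{1}=\langle a^{l^{\tau-a_{1}}}\rangle\subseteq H_{2}$. If instead $a_{2}\le a_{1}$ the same argument gives $H_{2}\subseteq H_{1}$. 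This proves the first assertion: in all cases $H_{1}\subseteq H_{2}$ or $H_{2}\subseteq H_{1}$.

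The remaining claims follow at once. If $H_{1}\subseteq H_{2}$ then $H_{1}\cap H_{2}=H_{1}=H_{j}$ with $j=1$, and symmetrically $j=2$ in the other case, so $H_{1}\cap H_{2}$ always equals $H_{1}$ or $H_{2}$. Finally, if $H_{1}\neq\{\Id\}$ and $H_{2}\neq\{\Id\}$, then $a_{1}\ge1$ and $a_{2}\ge1$, so $H_{1}$ and $H_{2}$ both have order $>1$; since $H_{1}\cap H_{2}$ equals one of them, $H_{1}\cap H_{2}\neq\{\Id\}$.

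The argument is completely elementary, so there is no genuine obstacle; the only point deserving care is the uniqueness of a subgroup of each admissible order in a cyclic group, and this has in effect already been established in the excerpt via the exact sequence $1\lra\Lambda_{d}\lra G\stackrel{\varphi_{d}}{\lra}G^{d}\lra1$ together with the identity $G^{d}=\Lambda_{n/d}$.
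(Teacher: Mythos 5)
Your proof is correct and follows essentially the same route as the paper: both rest on the fact that a cyclic group of order $l^{\tau}$ has a unique subgroup of each order $l^{a}$, $0\le a\le\tau$, and that these form a chain, from which the containment and intersection claims are immediate. You merely make explicit the generator computation $\langle a^{l^{\tau-a_{1}}}\rangle\subseteq\langle a^{l^{\tau-a_{2}}}\rangle$ that the paper leaves implicit, so no further comment is needed.
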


\begin{proof}
By cyclicity, $G$ has a unique subgroup of each of the divisors
of $|G|=l^{\tau}$. These subgroups are $\Lambda_0\subseteq
\Lambda_1\subseteq \cdots\subseteq \Lambda_{\tau}$ with
$|\Lambda_i|=l^i$. The result follows.
\end{proof}

\section{Extended genus fields and class field theory}\label{S3-1}

First we establish the definition of {\em extended genus fields} according
to Angl\`es and Jaulent \cite{AnJa2000}.
For the terminology and notations we refer to \cite{RzeVil2023}.

We have that $k_{\infty}\cong \F\big(\big(\frac{1}{T}\big)\big)$ is the
completion of $k$ at $\p$. Let $x\in \*{k_{\infty}}$. Then $x$ is
written uniquely as
\[
x=\Big(\frac 1T\Big)^{n_x} \lambda_x\varepsilon_x
\quad \text{with} \quad n_x\in
{\ma Z},\quad \lambda_x\in \*{\F}\quad\text{and}\quad \varepsilon_x
\in U_{\infty}^{(1)},
\]
where $U_{\infty}^{(1)}=U_{\p}^{(1)}$ is the group of the one
units of $k_{\infty}$. We write $\pi_{\infty}:=1/T$, 
which is an uniformizer at $\p$.

The {\em sign function} is defined
as $\phi_{\infty}\colon \*{k_{\infty}}\lra \*{\F}$ given by $\phi_{\infty}
(x)=\lambda_x$ for $x\in \*{k_{\infty}}$. 

We have that $\phi_{\infty}$ is an epimorphism and $\ker \phi_{\infty}=
\langle\pi_{\infty}\rangle\times U_{\infty}^{(1)}$.

For a finite separable extension $L$ of $k_{\infty}$, we define
the {\em sign of $\*L$} by the morphism $\phi_L:=\phi_{\infty}\circ
\N_{L/k_{\infty}}\colon \*L\lra \*{\F}$. We have $\frac{\*L}
{\ker \phi_L}\cong A\subseteq \*\F$.

For a global function field $L$, let $\P$ be the set of places of $L$
dividing $\p$.
We define the following subgroups of the group of id\`eles $J_L$ as:
\begin{gather}\label{Ec0}
U_L:=\prod_{v|\infty}\*{L_v}\times \prod_{v\nmid\infty} 
U_{L_v}\quad\text{and}\quad
U_L^+:=\prod_{v|\infty}\ker \phi_{L_v}\times \prod_{v\nmid \infty} U_{L_v},
\end{gather}
where we denote $v\nmid\infty$ if $v\notin \P$ and $v|\infty$ if $v\in \P$.
The groups $U_L L^*$ and $U_L^+ L^*$ are open subgroups of 
$J_L$, the id\`ele group of $L$. 

\begin{definition}\label{D3.9}
Let $K/k$ be a finite abelian extension. Then the {\em Hilbert Class Field
(HCF)} $K_H$ and the {\em extended HCF} $K_{H^+}$ of $K$ are
the fields corresponding to the id\`ele
subgroups $U_K \*K$ and $U_K^+ \*K$ of $J_K$ respectively. 
By class field theory 
(\cite[Theorem 17.6.198]{RzeVil2017}), the respective 
genus $\g K$ and extended genus fields $\ge K$
with respect to the extension $K/k$, correspond
to the id\`ele subgroups
$(\N_{K/k}U_K)\*k$ and $(\N_{K/k}U_K^+)\*k$ of $J_k$ respectively.
\end{definition}

By class field theory we have
\[
\Gal(K_H/K)\cong J_K/U_K\*K.
\]
We have that $K_{H^+}/K$ is an unramified extension at the finite prime
divisors of $K$, $K_H\subseteq K_{H^+}$ and
\[
\Gal(K_{H^+}/K)\cong J_K/U_K^+\*K.
\]

Let $K/k$ be a finite abelian extension then,
with the notation given above, if $E={\mc M}K\cap \cicl N{}$,
we have $\ge K=DK$ for some subfield $\ge{(\g E^{\H})}\subseteq D
\subseteq \ge E$ for some decomposition group $\H$ (see \cite{RzeVil2023}).
In most cases we have $\H=\{\Id\}$. In this case, $\ge{(\g E^{\H})}=\ge E$
and $\ge K=\ge E K$. 

In this paper, we particularly study the case $\ge{(\g E^{\H})}\neq \ge E$.
A general result is the following.

\begin{proposition}\label{PExtended4.1}
Let $K/k$ be a finite abelian extension and let $E$ be given by 
{\rm{(\ref{EcExtended1})}}. Then, if $P_1,\ldots,P_r$ are the finite
primes of $k$ ramified in $K$ and
\[
e_{P_j}(\g E^{\H}|k)=e_{P_j}(E|k)=e_{P_j}(\g E|k),
\]
for all $1\leq j\leq r$, it follows that $(\ge{\g E^{\H})}=\ge E$.
\end{proposition}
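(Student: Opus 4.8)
The plan is to show the two displayed chains of inclusions, $\ge{(\g E^{\H})}\subseteq \ge E$ on the one hand and $\ge E\subseteq \ge{(\g E^{\H})}$ on the other, by comparing the Dirichlet character groups (equivalently, the ``$Y=\prod_P X_P$'' construction of Section \ref{S2}) attached to each field. Recall that for a cyclotomic function field $F$ with character group $X$, the field $\ge F$ corresponds to $Y_F:=\prod_{P\in R_T^+} X_P$, where $X_P$ is the $P$-component subgroup of $X$; and the ramification index $e_P(F|k)$ equals $|X_P|$. So the statement ``$\ge{(\g E^{\H})}=\ge E$'' is equivalent to ``$Y_{\g E^{\H}}=Y_{\g E}$'', and since $\g E^{\H}\subseteq E\subseteq \g E$ are all cyclotomic function fields with character groups $X_{\g E^{\H}}\subseteq X_E\subseteq X_{\g E}$, it suffices to prove $Y_{\g E^{\H}}=Y_{\g E}$ under the hypothesis $e_{P_j}(\g E^{\H}|k)=e_{P_j}(\g E|k)$ for all ramified $P_j$.

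First I would dispose of the unramified primes: if $P\in R_T^+$ is unramified in $\g E$, then a fortiori it is unramified in the subfield $\g E^{\H}$, so $(X_{\g E})_P=\{1\}=(X_{\g E^{\H}})_P$. The only finite primes that can ramify in $\g E$ (hence contribute nontrivially to $Y$) are those ramifying in $E$, and by the structure of $E={\mc M}K\cap\cicl N{}$ these are exactly the $P_1,\dots,P_r$ ramified in $K$ (the primes ramified in $k_m/k$ and $L_n/k$ are only $\p$ and nothing finite, so ramification of finite primes in $E$ matches that in $K$). Thus $Y_{\g E}=\prod_{j=1}^{r}(X_{\g E})_{P_j}$ and $Y_{\g E^{\H}}=\prod_{j=1}^{r}(X_{\g E^{\H}})_{P_j}$, and it is enough to show $(X_{\g E})_{P_j}=(X_{\g E^{\H}})_{P_j}$ for each $j$.

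Now fix a ramified prime $P=P_j$. We have $(X_{\g E^{\H}})_{P}\subseteq (X_{\g E})_{P}$ with $|(X_{\g E^{\H}})_{P}|=e_{P}(\g E^{\H}|k)$ and $|(X_{\g E})_{P}|=e_{P}(\g E|k)$, and the hypothesis says these two indices coincide. A subgroup of equal cardinality inside a finite group is the whole group, so $(X_{\g E^{\H}})_{P}=(X_{\g E})_{P}$. Here the intermediate equality $e_{P}(\g E^{\H}|k)=e_{P}(E|k)$ in the hypothesis is what guarantees that passing from $E$ to the (possibly larger) $\g E$ — which by construction of $\ge{(\cdot)}$ is unramified at finite primes over $E$ — does not change the $P$-component; i.e. $e_{P}(\g E|k)=e_{P}(E|k)$ automatically, and then the hypothesis pins $\g E^{\H}$ to the same value. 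Feeding this back gives $Y_{\g E^{\H}}=Y_{\g E}$, and therefore $\ge{(\g E^{\H})}=\ge E$, as the two fields correspond to the same character group.

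The main obstacle, and the point that needs the most care, is the bookkeeping of which primes ramify in which of the intermediate fields $\g E^{\H}$, $E$, $\g E$, and the verification that the $P$-component subgroups behave monotonically and that $\ge{(\cdot)}$ really is the finite-unramified-maximal cyclotomic extension so that no \emph{new} finite ramification is introduced when forming $\g E$ from $E$. Once that is pinned down, the rest is the elementary group-theoretic fact (a subgroup of the same order is everything), together with the translation between ramification indices and character-component orders recalled in Section \ref{S2}.
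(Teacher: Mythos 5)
Your argument is correct and is essentially the paper's own proof: both identify $\ge E$ and $\ge{(\g E^{\H})}$ with the fields attached to $\prod_{j=1}^{r}$ of the $P_j$-components of the corresponding Dirichlet character groups (unramified primes contributing trivially), and then conclude from the hypothesis that each pair of nested cyclic components has equal order, hence is equal. The only cosmetic difference is that you read off the components from $\g E$ where the paper reads them from $E$; the hypothesis $e_{P_j}(E|k)=e_{P_j}(\g E|k)$ makes these coincide.
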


\begin{proof}
The group of Dirichlet characters associated to $\ge E$ is 
\[
Y:=\prod_{
P\in R_T^+}X_P=\prod_{j=1}^r X_{P_j},
\]
where $X$ is the group of Dirichlet characters 
associated to $E$. Each $X_{P_j}$ is cyclic of order $e_{
P_j}(E|k)$. The field associated to $X_{P_j}$ is the
subfield of $\cicl {P_j}{}$ of degree $e_{P_j}(E|k)$ over $k$.
It follows that 
\[
[\ge E:k]=\prod_{j=1}^r e_{P_j}(E|k).
\]

Let $Z$ be the group of Dirichlet characters associated to
$\ge{(\g E^{\H})}$. The only finite primes of $k$ possibly ramified in
$\ge{(\g E^{\H})}$ are $P_1,\ldots, P_r$. The group of Dirichlet
characteres to $\ge{(\g E^{\H})}$ is $\prod_{j=1}^r Z_{P_j}$.
By hypothesis, $X_{P_j}=Z_{P_j}$ for all $1\leq j\leq r$. 
Therefore $\ge{(\g E^{\H})}=\ge E$.
\end{proof}

\begin{corollary}\label{CExtended4.2}
We have $\ge{(\g E^{\H})}\neq \ge E \iff$ there exists $1\leq j_0\leq r$
such that $e_{P_{j_0}}(\ge E^{\H}|k)<e_{P_{j_0}}(E|k)=e_{P_{j_0}}(\g
E|k)=e_{P_{j_0}}(\ge E|k)$.
\end{corollary}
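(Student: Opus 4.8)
The plan is to reduce the equivalence to a factor-by-factor comparison of degrees over $k$, using the description of extended genus fields by Dirichlet character groups already exploited in the proof of Proposition \ref{PExtended4.1}. First I would record two ``automatic'' facts. Since $\g E/E$ is unramified (as $\g E\subseteq E_H$) and $\ge E/E$ is unramified at the finite primes by its very definition as the maximal cyclotomic extension of $E$ unramified at the finite prime divisors, one gets $e_{P_j}(E|k)=e_{P_j}(\g E|k)=e_{P_j}(\ge E|k)$ for every $j$. In character terms, if $X$ is the Dirichlet group of $E$ and $X'$ that of $\g E$, the inclusions $E\subseteq\g E\subseteq\ge E$ give $X\subseteq X'\subseteq Y:=\prod_{j=1}^r X_{P_j}$ (the Dirichlet group of $\ge E$); taking $P_j$-components and using that the $P_j$-component of $Y$ is $X_{P_j}$ itself — because distinct primes contribute independent conductors, so $Y=\prod_j X_{P_j}$ is an internal direct product — forces $X'_{P_j}=X_{P_j}$ for all $j$, and in particular $\ge{(\g E)}=\ge E$.

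Second, I would establish the containment $\ge{(\g E^{\H})}\subseteq\ge E$. Since $\g E^{\H}\subseteq\g E$, the Dirichlet group $W$ of $\g E^{\H}$ is contained in $X'$, hence $W_{P_j}\subseteq X'_{P_j}=X_{P_j}$ for all $j$. As $P_1,\dots,P_r$ (the finite primes ramified in $K$, equivalently in $E$, equivalently in $\g E$) are the only finite primes possibly ramified in $\g E^{\H}$, the Dirichlet group of $\ge{(\g E^{\H})}$ is $\prod_{j=1}^r W_{P_j}\subseteq\prod_{j=1}^r X_{P_j}=Y$, i.e.\ $\ge{(\g E^{\H})}\subseteq\ge E$. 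The same computation as in Proposition \ref{PExtended4.1} then yields
$[\ge{(\g E^{\H})}:k]=\prod_{j=1}^r|W_{P_j}|=\prod_{j=1}^r e_{P_j}(\g E^{\H}|k)$ and $[\ge E:k]=\prod_{j=1}^r e_{P_j}(E|k)$, together with the factorwise bound $e_{P_j}(\g E^{\H}|k)=|W_{P_j}|\le|X_{P_j}|=e_{P_j}(E|k)$.

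Finally, since $\ge{(\g E^{\H})}\subseteq\ge E$, the two fields coincide if and only if their degrees over $k$ coincide, i.e.\ iff $\prod_j e_{P_j}(\g E^{\H}|k)=\prod_j e_{P_j}(E|k)$; because each left-hand factor is at most the corresponding right-hand factor, this holds iff $e_{P_j}(\g E^{\H}|k)=e_{P_j}(E|k)$ for every $j$. Negating and appending the automatic equalities $e_{P_{j_0}}(E|k)=e_{P_{j_0}}(\g E|k)=e_{P_{j_0}}(\ge E|k)$ from the first step gives exactly the asserted equivalence. One implication ($\ge{(\g E^{\H})}\ne\ge E\Rightarrow$ such a $j_0$ exists, with strictness coming from $\g E^{\H}\subseteq\g E$) is also just the contrapositive of Proposition \ref{PExtended4.1}, so only the reverse implication genuinely needs the degree argument. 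I expect the single delicate point to be the bookkeeping with $P_j$-components — specifically verifying $X'_{P_j}=X_{P_j}$ and that $Y=\prod_j X_{P_j}$ is an internal direct product with $(Y)_{P_j}=X_{P_j}$ — everything else being formal.
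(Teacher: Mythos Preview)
Your argument is correct and follows essentially the same route as the paper: the paper's proof is a one-liner invoking Proposition \ref{PExtended4.1} together with the fact that $\ge E/\g E$ is unramified at every finite prime (and $\p$ is fully ramified there), which is exactly the mechanism behind your equalities $e_{P_j}(E|k)=e_{P_j}(\g E|k)=e_{P_j}(\ge E|k)$ and the degree comparison. You have simply made explicit the Dirichlet-character bookkeeping and the product-of-ramification-indices formula that the paper leaves implicit in its reference back to the proof of Proposition \ref{PExtended4.1}.
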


\begin{proof}
It follows from Proposition \ref{PExtended4.1} and from the facts
that $\ge E/\g E$ is not ramified at any finite prime and that $\p$
is fully ramified in $\ge E/\g E$.
\end{proof}

Therefore, if $\ge{(\g E^{\H})}\neq \ge E$, or, equivalently, there exists
a finite prime $P_j$ ramified in $\ge E/\g E^{\H}$. The
prime $\p$ is fully ramified in $\ge E/\g E^{\H}$.

We have that $\H\subseteq I_{\infty}(\g E/k)$, the inertia group
of $\p$ in the extension $\g E/k$. 

It follows that $\Gal(\ge E/\g E^{\H})\subseteq I_{\infty}(\ge E/k)$
and we have that $I_{\infty}(\ge E/k)$ is a cyclic group of order
a power of $l$. Set $I:=I_{\infty}(\ge E/k)$, an $l$--cyclic group.
Furthermore $|I|=e_{\infty}(\ge E|k)|q-1$.

Using Proposition \ref{PExtended5.4.1}, we obtain:

\begin{proposition}\label{PExtended5.4.2}
$\ge E=\g E$.
\end{proposition}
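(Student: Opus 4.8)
The plan is to argue by contradiction, assuming $\ge E\neq\g E$ on top of the standing hypothesis $\ge{(\g E^{\H})}\neq\ge E$, and to exhibit two nontrivial subgroups of the cyclic $l$--group $I:=I_{\infty}(\ge E/k)$ whose intersection is trivial, which Proposition \ref{PExtended5.4.1} forbids. First I set up the two subgroups. By the standing hypothesis and Corollary \ref{CExtended4.2} (together with the observation recorded just after it), some finite prime $P_{j_0}$ of $k$ ramifies in $\ge E/\g E^{\H}$; fixing a place $\mathfrak Q$ of $\ge E$ above $P_{j_0}$ and putting $I_1:=I_{\mathfrak Q}(\ge E/\g E^{\H})$, we get $I_1\neq\{\Id\}$. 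Under the contradiction hypothesis $\ge E\neq\g E$ the group $I_2:=\Gal(\ge E/\g E)$ is also nontrivial. Since $\g E^{\H}\subseteq\g E\subseteq\ge E$, both $I_1$ and $I_2$ lie in $\Gal(\ge E/\g E^{\H})$, and we already know $\Gal(\ge E/\g E^{\H})\subseteq I_{\infty}(\ge E/k)=I$; hence $I_1$ and $I_2$ are subgroups of $I$.

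Next I would compute their intersection. The standard behaviour of inertia groups in the tower $\g E^{\H}\subseteq\g E\subseteq\ge E$ at the place $\mathfrak Q$ gives $I_1\cap\Gal(\ge E/\g E)=I_{\mathfrak Q}(\ge E/\g E)$, so $I_1\cap I_2=I_{\mathfrak Q}(\ge E/\g E)$. But $\ge E/\g E$ is unramified at every finite prime — one of the inputs already used in the proof of Corollary \ref{CExtended4.2} — so $I_{\mathfrak Q}(\ge E/\g E)=\{\Id\}$ and therefore $I_1\cap I_2=\{\Id\}$. Since $I$ is cyclic of order a power of $l$, Proposition \ref{PExtended5.4.1} says any two of its nontrivial subgroups intersect nontrivially; as $I_1\neq\{\Id\}$, this forces $I_2=\{\Id\}$, i.e.\ $\ge E=\g E$.

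The point requiring the most care is the inertia bookkeeping in the tower $k\subseteq\g E^{\H}\subseteq\g E\subseteq\ge E$ at $\mathfrak Q$ — specifically, that $I_1\cap\Gal(\ge E/\g E)$ equals the inertia group of $\mathfrak Q$ over $\mathfrak Q\cap\g E$ in $\ge E/\g E$ — together with the need to invoke, in the correct form, the two structural facts about genus fields used earlier: that $\ge E/\g E$ is unramified at all finite primes and that $\Gal(\ge E/\g E^{\H})$ sits inside $I_{\infty}(\ge E/k)$. Everything else then collapses to Proposition \ref{PExtended5.4.1}.
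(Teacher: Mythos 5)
Your proof is correct and follows essentially the same route as the paper's: both arguments place $I_{\mathfrak Q}(\ge E/\g E^{\H})$ and $\Gal(\ge E/\g E)$ inside the cyclic $l$--group $I_{\infty}(\ge E/k)$, show their intersection is trivial because $\ge E/\g E$ is unramified at the finite primes, and conclude via Proposition \ref{PExtended5.4.1}. The only cosmetic difference is that you compute the intersection directly as $I_{\mathfrak Q}(\ge E/\g E)$, whereas the paper phrases the same computation through the fixed field $F=\ge E^{H_1\cap H_2}$, in which $P_i$ would be simultaneously fully ramified and unramified.
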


\begin{proof}
Let $G:=I=I_{\infty}(\ge E/k)$, $H_1:=I_{P_i}(\ge E/\g E^{\H})$ and
$H_2:= \Gal(\ge E/\g E)$. By hypothesis, we have $H_1\neq\{\Id\}$.
Set $\Phi:=H_1\cap H_2$ and $F:=\ge E^{\Phi}$.
\[
\xymatrix{
& \ge E\ar@{-}[d]\ar@/^1pc/@{-}[d]^{H_2}\ar@{-}[dl]_{H_1}\\
\ge E^{H_1}\ar@{-}[d]&\g E\ar@{-}[dl]^{\H}\\ \g E^{\H}\ar@{-}[d]\\
\ge E^I\ar@{-}@/^5pc/@{-}[uuur]^I
}
\qquad\qquad
\xymatrix{
\ge E\ar@{-}[d]^{\Phi}\\F
}
\qquad\qquad
\xymatrix{
\ge E\ar@{-}[d]\\F\ar@{-}[d]\\ \g E
}
\]
Then $P_i$ is fully ramified in $\ge E/F$ and $\g E\subseteq F$.
Therefore $P_i$ is fully ramified and non-ramified in $\ge E/F$.
Hence $F=\ge E$ and $H_1\cap H_2=\{\Id\}$. Finally, it follows that
$H_2=\{\Id\}$ and that $\ge E=\g E$.
\end{proof}

In the rest of the section we will focus in the cases $\ge{(\g E^{\H})}
\neq \ge E$.

\begin{definition}\label{D3.10}
For a finite abelian extension $K/k$ we define $K^{\ext}:=\ge E K$,
where $E$ is given by {\rm{(\ref{EcExtended1})}}.
\end{definition}

In this paper we will proof that always
\[
K^{\ext}=\ge K=\ge E K,
\]
where $K/k$ is any finite abelian extension.

In \cite{RaRzVi2019} we define the extended genus field of a 
finite abelian extension $K/k$ as $K^{\ext}$. In general we have
$\ge K\subseteq K^{\ext}$.

We recall the following theorem from class field theory.

\begin{theorem}\label{TEx1.2} 
Let $F$ be a global function field. Let $N/F$ be a
finite abelian extension and let $B<C_F$ be the the subgroup of
the id\`ele class group of $F$ corresponding to $N$. Then, if $\F$ is
the field of constants of $F$, ${\mathbb F}_{q^{\kappa}}$ is the
field of constants of $N$, where
\[
\kappa:=\min\{\sigma\in{\mathbb
N}\mid \text{there exists $\tilde{\vec \alpha}\in B$ such that $\deg 
\tilde{\vec \alpha}=\sigma$}\}.
\]
\end{theorem}

\begin{proof}
See \cite[Teorema 17.6.192]{RzeVil2017}.
\end{proof}

\begin{remark}\label{REx1.3}{\rm{Note that
if $F$ is any global function field, then the field of the
constants of $\ge F$ and of $F_{H^+}$ is the same.
Therefore, in the especial case 
$\ge{(\g E^{\H})}\neq \ge E$, if we obtain that if the
field of constants of $\ge E K$ is contained in the one of
$K_{H^+}$ then, because $\ge E K=\g E K$ is an
extension of constants of $\g K$, say $\ge E K=\g K
{\ma F}_{q^{\mu}}$ and $\g K\subseteq K_H\subseteq
K_{H^+}$ and ${\ma F}_{q^{\mu}}\subseteq K_{H^+}$,
it follows that $\ge E K\subseteq K_{H^+}$ and
therefore $\ge E K\subseteq \ge K$. Since
$\ge K\subseteq \ge EK$ (see \cite{RzeVil2023}),
we obtain $\ge E K=\ge K$. 
}}
\end{remark}

Consider a finite abelian extension $K/k$.
The id\`ele class subgroup $B$ of the id\`ele class group
$C_K$, the id\`ele class group of $K$, associated to $K_{H^+}$ is
\begin{gather*}
B=U_K^+\*K/\*K=\Big(\prod_{\pL| \infty}\ker \phi_{\pL}\times
\prod_{\pL\nmid \infty}U_{\pL}\Big)\*K/\*K,
\intertext{where} 
U_{\pL}:=U_{K_{\pL}}, \quad \ker\phi_{\pL}:=\ker\phi_{K_{\pL}},
\quad \text{and} \quad
\phi_{\pL}=\phi_{\infty}\circ \N_{K_{\pL}|k_{\infty}}.
\end{gather*}
We denote $\N_{\pL}:=\N_{K_{\pL}|k_{\infty}}$. 

On the one hand, if $\vec\alpha\in U_K^+$, then $\vec\alpha=
\big(\alpha_{\pL}\big)_{\pL}$, $\alpha_{\pL}\in U_{\pL}$ for
$\pL\nmid \infty$, so that $\deg_{\pL} \alpha_{\pL}=0$
for $\pL\nmid \infty$. On the other hand, if $\pL_1$ and
$\pL_2$ are two infinite primes, $\N_{\pL_1}\*{K_{\pL_1}}
=\N_{\pL_2}\*{K_{\pL_2}}$. It follows that if we fix an
infinite prime $\pL$ of $K$, then

\begin{lemma}\label{LEx1.6+1}
We have
\begin{align*}
\kappa:&=\min\{\sigma\in{\mathbb N}\mid \text{there exists $\vec
\alpha\in U_K^+$ such that $\deg\vec\alpha=\sigma$}\}\nonumber\\
&=\min\{\sigma\in{\mathbb N}\mid \text{there exists $\tilde{\vec
\alpha}\in B$ such that $\deg\tilde{\vec\alpha}=\sigma$}\}\nonumber\\
&=\min\{\sigma\in{\mathbb N}\mid \text{there exists $x\in \*{K_{\pL}}$
such that $x\in \ker \phi_{\pL}$ and $\deg_{\pL} x=\sigma$}\}. 
\tag*{$\fin$}
\end{align*}
\end{lemma}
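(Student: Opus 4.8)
The statement is really an unpacking of Theorem~\ref{TEx1.2} together with the explicit description of the id\`ele subgroup $B = U_K^+\*K/\*K$ attached to $K_{H^+}$, so the strategy is to trace the degree map through these identifications. The plan is to prove the chain of three equalities in the order written, namely: (i) the first equality is the definition of $\kappa$ applied to $U_K^+$; (ii) the second equality reduces to showing that the degree of an id\`ele in $U_K^+\*K$ can already be realized by an id\`ele lying in $U_K^+$ itself, so that $U_K^+$ and $B$ have the same set of attainable degrees; (iii) the third equality localizes the computation at a single fixed infinite prime $\pL$ of $K$, using the product structure of $U_K^+$ and the fact that all the infinite-prime norm images coincide.

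For step (ii), the key point is already sketched in the paragraph preceding the lemma: if $\vec\alpha\in U_K^+$ then $\alpha_{\pL}\in U_{\pL}$ for every finite prime $\pL$, hence $\deg_{\pL}\alpha_{\pL}=0$ at all finite places, so $\deg\vec\alpha=\sum_{\pL|\infty}\deg_{\pL}\alpha_{\pL}$ depends only on the infinite components. To pass from $B$ to $U_K^+$, I would note that an element $\tilde{\vec\alpha}\in B$ is represented by $\vec\alpha\, x$ with $\vec\alpha\in U_K^+$ and $x\in\*K$; since $x\in\*K$ is a principal id\`ele it has total degree $0$, so $\deg\tilde{\vec\alpha}=\deg\vec\alpha$. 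This shows the set of degrees of elements of $B$ equals the set of degrees of elements of $U_K^+$, giving the second equality. (One must be slightly careful that ``$\deg$'' on $B=J_K\*K/\*K$ is well defined precisely because $\*K$ has degree $0$ — this is the standard fact about the degree map on id\`ele classes.)

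For step (iii), fix one infinite prime $\pL_0$ of $K$. Given $\vec\alpha\in U_K^+$ with $\deg\vec\alpha=\sigma$, its components $\alpha_{\pL}$ at the infinite primes satisfy $\alpha_{\pL}\in\ker\phi_{\pL}$. Because $\N_{\pL_1}\*{K_{\pL_1}}=\N_{\pL_2}\*{K_{\pL_2}}$ for any two infinite primes (they are all equal to the norm group of the single completion $k_\infty$, as the extension is abelian and the decomposition behaviour at $\p$ is uniform), one can ``concentrate'' the degree: replace $\vec\alpha$ by an id\`ele whose only nontrivial component is at $\pL_0$, lying in $\ker\phi_{\pL_0}$, with the same total degree $\sigma$, modifying by a principal id\`ele if necessary. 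Conversely any $x\in\ker\phi_{\pL_0}$ with $\deg_{\pL_0}x=\sigma$ extends (by $1$ at all other places) to an element of $U_K^+$ of degree $\sigma$. Hence the minima coincide.

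The main obstacle I anticipate is the ``concentration'' argument in step (iii): showing rigorously that a given infinite-prime-supported degree-$\sigma$ datum in $\prod_{\pL|\infty}\ker\phi_{\pL}$ can be transported to a single chosen prime $\pL_0$ modulo the principal id\`eles $\*K$, while staying inside $\ker\phi_{\pL_0}$. This needs the equality of the local norm groups $\N_{\pL}\*{K_{\pL}}$ together with the sign-function compatibility $\phi_{\pL}=\phi_\infty\circ\N_{\pL}$, and a careful bookkeeping of how a principal id\`ele from $\*K$ acts on the infinite components. Everything else — the vanishing of finite-component degrees, the degree-zero property of $\*K$, and the passage between $U_K^+$ and $B$ — is formal.
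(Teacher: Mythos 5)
Your proposal is correct and follows essentially the same route as the paper, which states the lemma without a separate proof precisely because the two observations you expand on (finite components of $U_K^+$ have degree zero, and the norm groups $\N_{\pL}\*{K_{\pL}}$ agree at all infinite primes, so the attainable local degrees do too) are recorded in the paragraph immediately preceding it. The only simplification worth noting is that your ``concentration'' step is not really needed: since $\ker\phi_{\pL}$ is a subgroup, the set $\deg_{\pL}(\ker\phi_{\pL})\subseteq{\ma Z}$ is a subgroup independent of the chosen infinite prime, so a sum of local degrees over the infinite places lies in that same subgroup and the minima coincide at once.
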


\section{Cyclic extensions of prime power degree}\label{S3}

We study the extended genus field $\ge K$ of a finite cyclic extension
$K/k$ of degree $l^n$ with $l$ a prime number and $n\geq 1$.
We will assume that the extension $K/k$ is geometric, that is, the field
of constants of $K$ is $\F$.
We consider four type of primes $l$:
\las
\item $l=p$, where $p$ is the characteristic of $k$, the Artin-Schreier-Witt
case,
\item $l\neq p$ and $l\nmid q-1$,
\item $l^n|q-1$, the Kummer case,
\item $l^{\rho}|q-1$ wiith $1\leq \rho<n$ and $l^n\nmid q-1$, the
``semi-Kummer'' case.
\end{list}

\subsection{The Artin-Schreir-Witt case: $l=p$}\label{SubsecEx1}

Since $\H$ is a subgroup of the inertia group of $\p$ in $E/k$
and the order of this last group is a divisor of $q-1$, the order of
$\H$ is relative prime to $p$. Hence $\H=\{\Id\}$. It follows
that $\ge{(\g E^{\H})}=\ge E$.

\subsection{Case $l\neq p$ and $l\nmid q-1$}\label{SubsecEx2}

By the same reason of Subsection \ref{SubsecEx1}, the order of
$\H$ is relative prime to $l$. Thus $\H=\{\Id\}$ and $\ge{(\g E^{\H})}
=\ge E$.

\subsection{The Kummer case: $l^n|q-1$}\label{Sc4.2*}

\subsubsection{The genus field in the cyclotomic case}\label{Ss4.2}

Let $K=F=\Ku{l^n}{\*D}$ with $D=\poly$
be a Kummer cyclic extension of $k$. Let $X=
\langle\chi\rangle$ be the group of Dirichlet characters associated to
$F$. Note that for any $\nu\in {\ma N}$ 
relatively prime to $l$, the
field associated to $\chi^{\nu}$ is $F$ since $X=\langle\chi^{\nu}\rangle$.
The above corresponds to the fact that $F=\Ku{l^{n}}{\*{(D^{\nu})}}$. 

When $D=P\in R_T^+$
we have that the character associated to $F$ is $\xbinom{}P_{l^{n}}$,
the Legendre symbol which is defined as follows: if $P$ is of degree
$d$, then for any $N\in R_T$ with $P\nmid N$, $N\bmod P\in
(R_T/\langle P\rangle)^*\cong {\ma F}_{q^d}^*$. Then 
$\xbinom NP_{l^{n}}$
is defined as the unique element of ${\ma F}_{q^d}^*$ such that
$N^{\frac{q^d-1}{l^{n}}}\equiv \xbinom NP_{l^{n}}\bmod P$. We have that 
$\xbinom {}P_{l^{n}}$ is the character associated to $\Ku {l^{n}}{\*P}$
(see \cite[Proposici\'on 9.6.1]{RzeVil2017}). Let us denote
$\chi_P=\xbinom {}P_{l^{n}}$. Then $\chi_P^{\nu}$ is the character
associated to $\Ku {l^{n}}{(P^{\nu})^*}$.

Hence, if $\chi_D$ is the character associated
to $\Ku {l^n}{\*D}$, then $\chi_{D}=
\prod_{j=1}^r\chi_{P_{j}}^{\alpha_{j}}$.

\begin{remark}\label{REx1.1+3}{\rm{
Let $l$ be a prime number different to $p$, the characteristic of
$K$, such that $l^{\kappa}|q-1$, $\kappa\geq
1$. We have that $-1\in (\f)^{l^{\kappa}}$ for all $l$ and 
all $\kappa$ except when $l=2$ and $2^{\kappa+1}\nmid q-1$.
}}
\end{remark}

\begin{proof}
If $l$ is odd, $(-1)^{l^{\kappa}}=-1$ for all $\kappa\in{\mathbb N}$. 
Let $l=2$. If $2^{\kappa+1}|q-1$, $\F$ contains a primitive $2^{
\kappa+1}$ root of unity $\xi$. Let $\mu:=\xi^{2^{\kappa}}$.
Then $\mu\neq 1$ and $\mu^2=1$. Thus $\mu=-1$.

On the other hand, if $2^{\kappa+1}\nmid q-1$, $\xi\notin \f$.
Now if we had $\mu=-1=\rho^{2^{\kappa}}$ for some $\rho
\in \f$, then $\rho$ is a primitive $2^{\kappa+1}$ root of
unity, contrary to our hypothesis.
\end{proof}

\begin{corollary}\label{CEx1.1+4}
For any prime number $l$ such that $l^{\kappa}|q-1$,
with $\kappa\in{\mathbb N}$, and
$D\in R_T$ we have $\Ku{l^{\kappa}}{D^*}=\Ku{l^{\kappa}}{D}$,
except when $\deg D$ is odd, $l=2$, and $2^{\kappa+1}\nmid q-1$.
We also have that if $\gamma\in\f$ and 
$\varepsilon=(-1)^{\deg D}\gamma$, then
$\kum{\kappa}{\varepsilon}=
\kum{\kappa}{\gamma}$ with the same exception.
\end{corollary}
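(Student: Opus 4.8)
The plan is to deduce the corollary directly from Remark \ref{REx1.1+3}, splitting on the parity of $\deg D$.

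First I would dispose of the case $\deg D$ even. Then, by the very definition of $D^{*}$, we have $D^{*}=(-1)^{\deg D}D=D$, so $\Ku{l^{\kappa}}{D^{*}}=\Ku{l^{\kappa}}{D}$ trivially; likewise $\varepsilon=(-1)^{\deg D}\gamma=\gamma$, so $\kum{\kappa}{\varepsilon}=\kum{\kappa}{\gamma}$. No hypothesis on $l$ or $\kappa$ is needed in this case.

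Next, assume $\deg D$ is odd, so that $D^{*}=-D$ and $\varepsilon=-\gamma$, and assume we are \emph{not} in the exceptional situation, i.e.\ not ($l=2$ and $2^{\kappa+1}\nmid q-1$). By Remark \ref{REx1.1+3} there is $\delta\in\f$ with $\delta^{l^{\kappa}}=-1$. Since $l^{\kappa}\mid q-1$, the field $\F$ already contains a primitive $l^{\kappa}$-th root of unity, so over $\Ku{l^{\kappa}}{D^{*}}$ the polynomial $X^{l^{\kappa}}-D^{*}$ splits, and its roots are the elements $\zeta\,\delta\,\rad{\kappa}{D}$ with $\zeta^{l^{\kappa}}=1$; in particular $\delta\,\rad{\kappa}{D}$ is one such root. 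As $\delta\in\F\subseteq k$, this yields
\[
\Ku{l^{\kappa}}{D^{*}}=k\big(\delta\,\rad{\kappa}{D}\big)=\Ku{l^{\kappa}}{D}.
\]
Equivalently, $D^{*}/D=-1=\delta^{l^{\kappa}}\in(k^{*})^{l^{\kappa}}$, and two radical extensions of $k$ of exponent dividing $l^{\kappa}$ attached to elements differing by an $l^{\kappa}$-th power coincide because $\zeta_{l^{\kappa}}\in k$. Running the identical computation over $\F$ with $\gamma$ in place of $D$ gives $\kum{\kappa}{\varepsilon}=\F\big(\delta\,\rad{\kappa}{\gamma}\big)=\kum{\kappa}{\gamma}$, again since $\delta\in\F$.

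There is no real obstacle here; the only point to watch is the stated exclusion. When $l=2$, $\deg D$ is odd and $2^{\kappa+1}\nmid q-1$, Remark \ref{REx1.1+3} says $-1\notin(\f)^{2^{\kappa}}$, hence also $-1\notin(k^{*})^{2^{\kappa}}$ (an element of $\F^{*}$ that is a $2^{\kappa}$-th power in $\F(T)^{*}$ must, by comparing divisors, already be a $2^{\kappa}$-th power in $\F^{*}$), so the manipulation above is unavailable — which is precisely why that case is set aside in the statement. Thus the whole proof reduces to invoking Remark \ref{REx1.1+3} and the standard behaviour of Kummer radicals over a field containing $\zeta_{l^{\kappa}}$.
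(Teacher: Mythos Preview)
Your proof is correct and follows essentially the same approach as the paper: split on the parity of $\deg D$, handle the even case trivially since $D^{*}=D$, and in the odd case invoke Remark \ref{REx1.1+3} to conclude that $-1\in(\f)^{l^{\kappa}}$ (outside the stated exception), so that $D^{*}/D$ is an $l^{\kappa}$-th power and the two radical extensions agree. Your additional remark explaining why the excluded case genuinely fails is more than the paper provides, but it is correct and harmless.
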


\begin{proof} If $\deg D$ is even, $(-1)^{\deg D}=1$. If $\deg D$ is odd,
$(-1)^{\deg D}=-1\in (\f)^{l^{\kappa}}$ except when $l=2$
and $2^{\kappa+1}\nmid
q-1$. The same for $\kum{\kappa}{\varepsilon}$ and
$\kum{\kappa}{\gamma}$.
\end{proof}

In general, for a radical extension, we have:

\begin{theorem}\label{T3.3} Let $F=\Ku{s}{\gamma D}$ be a geometric separable
extension of $k$, $\gamma\in\f$, and let $D=\poly\in R_T$. Then 
\begin{gather*}
e_{F/k}(P_{j})=\frac s{\gcd(\alpha_j,s)},\quad 1\leq j
\leq r\quad \text{and}\\
e_{\infty}(F|k):=e_{F/k}(\p)=\frac s{\gcd(\deg D,s)}.
\end{gather*}
\end{theorem}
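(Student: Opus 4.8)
The assertion is a purely local computation of ramification indices in the radical extension $F = \Ku{s}{\gamma D}$, and the natural tool is Abhyankar's Lemma together with the structure of Kummer extensions. First I would reduce to the case $\gamma = 1$: since the extension is geometric and separable, the constant $\gamma$ does not affect ramification at the finite primes $P_j$ (a constant extension is unramified everywhere at the finite primes), and at $\p$ one can absorb $\gamma$ via Corollary \ref{CEx1.1+4} up to the exceptional case, which only changes $D$ by a unit and a sign and hence does not alter $\deg D \bmod s$ in the relevant way. So it suffices to treat $F = \Ku{s}{D}$ with $D = \poly$.

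The heart of the argument is to write $F$ as a compositum of radical extensions of the individual prime powers. Consider $F_j := \Ku{s}{P_j^{\alpha_j}}$ for $1 \le j \le r$; then $F = F_1 \cdots F_r$ (up to the constant-field issue, which as above is harmless for ramification at finite primes, and which can be tracked at $\p$). For a single $F_j$, I would compute the ramification directly: in $\Ku{s}{P_j^{\alpha_j}}$ the prime $P_j$ ramifies because $v_{P_j}(P_j^{\alpha_j}) = \alpha_j$, and by the standard theory of tamely ramified Kummer extensions (here tame since $s \mid q-1$ forces $p \nmid s$) the ramification index of $P_j$ equals $s/\gcd(\alpha_j, s)$; every other finite prime $P_i$ with $i \ne j$ is unramified in $F_j$ since $v_{P_i}(P_j^{\alpha_j}) = 0$; and at $\p$ one has $v_{\p}(P_j^{\alpha_j}) = -\alpha_j \deg P_j$, giving $e_{\infty}(F_j|k) = s/\gcd(\alpha_j \deg P_j, s)$. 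Now apply Abhyankar's Lemma (Theorem \ref{T2.1}) repeatedly to the compositum: for the prime $P_j$, only the factor $F_j$ contributes ramification, all other factors being unramified there, so $e_{F/k}(P_j) = s/\gcd(\alpha_j, s)$; and for $\p$, Abhyankar gives $e_{\infty}(F|k) = \lcm_j \big(s/\gcd(\alpha_j \deg P_j, s)\big)$, which by elementary gcd/lcm manipulation equals $s/\gcd(\alpha_1 \deg P_1, \ldots, \alpha_r \deg P_r, s) = s/\gcd(\deg D, s)$, since $\deg D = \sum_j \alpha_j \deg P_j$ and $\gcd(a_1, \ldots, a_r, s)$ divides $\gcd(\sum a_j, s)$ while the claimed lcm identity holds precisely for cyclic $p'$-groups.

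I would present the lcm-to-gcd reduction cleanly: for divisors of $s$, writing $e_j = s/d_j$ with $d_j = \gcd(\alpha_j \deg P_j, s)$, one has $\lcm_j e_j = s/\gcd_j d_j$, and $\gcd_j d_j = \gcd(\alpha_1\deg P_1, \ldots, \alpha_r \deg P_r, s)$; the final equality $\gcd(\alpha_1\deg P_1,\ldots,\alpha_r\deg P_r, s) = \gcd(\deg D, s)$ is immediate once one notes that $\gcd(a_1,\ldots,a_r)$ and $\gcd(a_1+\cdots+a_r, a_1,\ldots,a_r)$ generate the same ideal in $\mathbb Z$, intersected with $s\mathbb Z$.

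**Main obstacle.** The one genuine subtlety is the interaction of the constant field with the compositum: $F_1\cdots F_r$ may strictly contain $F = \Ku{s}{D}$ because the individual radicals $\sqrt[s]{P_j^{\alpha_j}}$ need not lie in a geometric extension even when their product does. I would handle this by working inside $F\cdot \F_{q^m}(T)$ for suitable $m$, observing that passing to a constant extension changes neither $e_{F/k}(P_j)$ (finite primes are unramified in constant extensions) nor, by a short separate check, $e_{\infty}$ up to the already-noted exceptional $l=2$ case; alternatively one invokes Corollary \ref{CEx1.1+4} to normalize signs. Everything else — the single-prime computation and the Abhyankar bookkeeping — is routine.
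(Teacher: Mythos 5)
Your treatment of the finite primes is essentially sound: the direct local computation $v_{P_j}(\gamma D)=\alpha_j$ together with tame Kummer theory gives $e_{P_j}(F|k)=s/\gcd(\alpha_j,s)$ (note, though, that the compositum-plus-Abhyankar route by itself only yields the \emph{upper} bound $e_{P_j}(F|k)\mid s/\gcd(\alpha_j,s)$, since $F$ is merely a subfield of $F_1\cdots F_r$; you need the local lower bound to close it). Also, tameness comes from separability ($p\nmid s$), not from $s\mid q-1$. The paper itself gives no internal argument here --- it cites \cite[\S 5.1]{MaRzVi2017} --- so the comparison is with the standard proof, which is the direct local one.

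The computation at $\p$, however, contains a genuine error. Abhyankar's Lemma computes the ramification index of $\p$ in the \emph{compositum} $F_1\cdots F_r$, which in general strictly contains $F$, and the ramification of $\p$ in $F$ can be strictly smaller than in $F_1\cdots F_r$; moreover the identity $\gcd(\alpha_1\deg P_1,\dots,\alpha_r\deg P_r,s)=\gcd(\deg D,s)$ that you invoke is false (a gcd of summands only \emph{divides} the gcd of the sum). Concretely, take $q=5$, $s=4$, $D=P_1P_2$ with $\deg P_1=1$, $\deg P_2=3$. Then $\deg D=4$, so the theorem gives $e_\infty(F|k)=4/\gcd(4,4)=1$, and indeed $\p$ is unramified in $k\big(\sqrt[4]{D}\big)$; but your formula gives $\lcm(4/\gcd(1,4),\,4/\gcd(3,4))=4$, which is the ramification index of $\p$ in $F_1F_2$, not in $F$, and $\gcd(1,3,4)=1\neq 4=\gcd(4,4)$. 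The correct argument at $\p$ is the same direct local one as at the finite primes: write $D=T^{\deg D}D_1(1/T)$ with $D_1(1/T)\in U_\infty^{(1)}$, use $(U_\infty^{(1)})^{s}=U_\infty^{(1)}$ to absorb the one-unit, so that $F_\infty=k_\infty\big(\sqrt[s]{\gamma'\pi_\infty^{-\deg D}}\big)$ and hence $e_\infty(F|k)=s/\gcd(\deg D,s)$ --- exactly the manipulation the paper carries out in Subsection \ref{SEx6}.
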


\begin{proof} 
See \cite[\S 5.1]{MaRzVi2017}.
\end{proof}

As a consequence we obtain the following result for a cyclic
cyclotomic Kummer extension $F=k(\sqrt[l^n]{\*D})$. Let $X$
be the group of Dirichlet characters associated to $F$ and let
$Y=\prod_{P\in R_T^+} X_P$ be the group
associated to $M$, the maximal
cyclotomic extension of $F$ unramified at the finite primes. 

Let $P=P_{j}$, $X=X_P=\langle\chi_P\rangle$ and let $F_P$ be
the field associated to $X_P$.
Then $F_P$ is cyclotomic, $P$ is the only ramified prime in
$F_P/k$ and $P$ is tamely ramified in $F_P/k$. This implies that $F_P
\subseteq \cicl P{}$ and $\Gal(\cicl P{}/k)\cong C_{q^{d_P}-1}$ with
$d_P:=\deg P$. Therefore $F_P$ is the only field of degree $o(
\chi_P)=:l^{\beta_P}$ over $k$. Since
$F_P/k$ is a Kummer extension, it follows that $F_P
=\Ku{l^{\beta_P}}{\*P}$. 

\begin{theorem}\label{T3.1}
The maximal unramified
cyclotomic extension of $F=\Ku {l^n}{\*D}$ at the finite primes is
$M:=k(\sqrt[l^n]{(P_1^{\alpha_1})^*},\ldots, \sqrt[l^n]{(P_r^{\alpha_r})^*})$.
In other words, 
\[
\ge F=(\sqrt[l^n]{(P_1^{\alpha_1})^*},\ldots, 
\sqrt[l^n]{(P_r^{\alpha_r})^*}).
\]
\end{theorem}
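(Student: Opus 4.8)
The plan is to compute $\ge F$ directly from its character-theoretic description. After reducing each exponent modulo $l^n$ we may assume $1\le\alpha_j<l^n$ for every $j$ (a prime $P_j$ with $\alpha_j\equiv 0\bmod l^n$ is unramified in $F$ and contributes $\sqrt[l^n]{1}$ to $M$, hence may be discarded from both sides); this changes neither side, since $\chi_{P_j}$ has order $l^n$ and $\sqrt[l^n]{(P_j^{\alpha_j+l^n})^*}$ generates over $k$ the same field as $\sqrt[l^n]{(P_j^{\alpha_j})^*}$. In particular $v_l(\alpha_j)<n$. By the construction in \S\ref{Ss4.2}, $F$ is a cyclotomic function field with group of Dirichlet characters $X=\langle\chi_D\rangle$, where $\chi_D=\prod_{j=1}^{r}\chi_{P_j}^{\alpha_j}$ and $\chi_{P_j}=\xbinom{}{P_j}_{l^n}$ is a character modulo $P_j$; and, as recalled in Section~\ref{S2}, $\ge F$ is the field associated to $Y:=\prod_{P\in R_T^+}X_P$, where $X_P$ is the set of $P$-components of the elements of $X$.

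The first step is to compute the components $X_P$. Passing to the $P$-component of a Dirichlet character is multiplicative---it is the projection onto a factor of the Chinese Remainder decomposition of $(R_T/\langle D\rangle)^{*}$---and the $P$-component of a character modulo $P_j$ is trivial for $P\neq P_j$ and equals that character for $P=P_j$. Hence $(\chi_D)_{P_j}=\chi_{P_j}^{\alpha_j}$, while $(\chi_D)_P=\Id$ for every finite prime $P\notin\{P_1,\ldots,P_r\}$. Since $X=\langle\chi_D\rangle$, it follows that $X_{P_j}=\langle\chi_{P_j}^{\alpha_j}\rangle$, cyclic of order $l^n/\gcd(\alpha_j,l^n)=l^{\,n-v_l(\alpha_j)}$ (in agreement with Theorem~\ref{T3.3}), and $X_P=\{\Id\}$ for the remaining finite primes. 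Therefore
\[
Y=\prod_{j=1}^{r}\langle\chi_{P_j}^{\alpha_j}\rangle .
\]

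The second step is to identify the field associated to $Y$. As recalled in \S\ref{Ss4.2}, $\chi_{P_j}^{\alpha_j}$ is the character associated to $\Ku{l^n}{(P_j^{\alpha_j})^*}$, and the cyclic group it generates corresponds to the same field. Moreover the field associated to a product of groups of Dirichlet characters is the compositum of the fields associated to the factors: the intersection of all kernels of $Y$ equals the intersection over $j$ of the intersections of the kernels of $\langle\chi_{P_j}^{\alpha_j}\rangle$, and the fixed field of an intersection of subgroups is the compositum of the corresponding fixed fields. Applying this to the formula for $Y$ above yields
\[
\ge F=\prod_{j=1}^{r}\Ku{l^n}{(P_j^{\alpha_j})^*}=k\big(\sqrt[l^n]{(P_1^{\alpha_1})^*},\ldots,\sqrt[l^n]{(P_r^{\alpha_r})^*}\big)=M ,
\]
which is the assertion.

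I expect the only genuinely delicate point to be the multiplicativity-of-components step: that the local component of $\chi_D$ at $P_j$ is exactly $\chi_{P_j}^{\alpha_j}$ and that no other finite prime enters. This rests on each $\Ku{l^n}{\*{P_j}}/k$ being ramified, among the finite primes, only at $P_j$, which is where the Kummer hypothesis $l^n\mid q-1$---guaranteeing tame ramification and hence squarefree conductors---is used. As an independent check one can argue concretely: the identity $\*D=\prod_{j=1}^{r}(P_j^{\alpha_j})^*$ gives $F\subseteq M\subseteq\cicl{P_1\cdots P_r}{}$, so $M$ is cyclotomic, and for each ramified prime $P_i$, Theorem~\ref{T3.3} together with Abhyankar's Lemma (Theorem~\ref{T2.1}) shows that the ramification index of $P_i$ in $M/k$ is $l^{\,n-v_l(\alpha_i)}=e_{P_i}(F|k)$, so $M/F$ is unramified at every finite prime; hence $M\subseteq\ge F$, and the character computation then forces equality.
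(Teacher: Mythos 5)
Your proof is correct and follows essentially the same route as the paper: it computes the group $Y=\prod_P X_P$ associated to $\ge F$, identifies each component $X_{P_j}=\langle\chi_{P_j}^{\alpha_j}\rangle$ with the field $\Ku{l^n}{(P_j^{\alpha_j})^*}$, and takes the compositum. Your closing ``independent check'' via Theorem \ref{T3.3} and Abhyankar's Lemma is also the same as the alternative proof the paper records immediately after Remark \ref{R3.2}.
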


\begin{proof}
It follows since $M$ corresponds to the group of
Dirichlet characters $Y=\prod_{P\in R_T^+}X_P$
and the field associated to $X_P$ is $F_P
=\Ku{l^{\beta_P}}{\*P}$ for each $P\in R_T^+$. The 
result follows.
\end{proof}

\begin{remark}\label{R3.2}{\rm{
Let $\alpha=l^a b$ with $\gcd(b,l)=1$ 
and $a < n$. Then $\Ku {l^n}{(P^{\alpha})^*}=
\Ku {l^{n-a}}{\*P}$ and
\[
\ge F=k(\sqrt[l^{n-a_1}]{P_1^*},\ldots, \sqrt[l^{n-a_r}]{P_r^*})=
F_1\cdots F_r,
\]
with $F_{j}=\Ku {l^{n-a_{j}}}{P_{j}^*}$, $1\leq j\leq r$.
}}
\end{remark}

Another proof of Theorem \ref{T3.1} is using Abhyankar's Lemma. On the one
hand we have that 
\[
[M:k]=\prod_{P\in R_T^+}|X_P|=\prod_{j=1}^r
|X_{P_{j}}|= \prod_{j=1}^r e_{M/k}(P_{j})=\prod_{j=1}^r 
l^{n-a_{j}}.
\]
On the other
hand if, $F_{j}=\Ku{l^{n-a_{j}}}{\*{(P_{j})}}$, from Abyankar's Lemma, $F F_{j}/F$ is
unramified at every finite prime, so $F F_1\cdots F_r/F$ is unramified at
the finite primes and $F\subseteq F_1\cdots F_r$. Hence $F_1\cdots
F_r\subseteq \ge F$ and $[F_1\cdots F_r:k]=[M:k]$. Therefore $M=
F_1\cdots F_r$.

The genus field of a cyclotomic cyclic extension, is given by the following theorem.

\begin{theorem}\label{Pengln} Let $E =k(\sqrt[l^n]{D^*})$, 
with $D=P_1^{\alpha_1}\cdots P_r^{\alpha_r}$, $1\leq \alpha_j\leq 
l^n-1$, $\alpha_j=b_jl^{a_j}$ with $\gcd(b_j,l)=1$, $1\leq j \leq r$, 
$P_1,\dots, P_r\in R_T^+$ different 
monic irreducible polynomials with $\deg P_j=c_jl^{d_j}$, 
$\gcd(c_j,l)=1$, $1\leq j\leq r$. We order the polynomials 
$P_1,\dots,P_r$ such that $0=a_1\leq\cdots\leq a_r\leq n-1$.

Let $E_{\eu {gex}}:=E_1\cdots E_r$ with $E_j=k(\sqrt[l^{n-a_j}]{P_j^*})$, $1\leq j\leq r$. Let
\begin{align*}
    e_\infty(E|k)=l^t\;\textit{with }\;t&=n-{\min}\{n,v_l(\deg D)\},\\
    &\\
    e_\infty(E_{\eu {gex}}|k)=l^m\;\textit{with }\;m&=\underset{1\leq j\leq r}
    {\max}v_l(e_\infty(E_j|k))\\
    &={\max}\{n-a_j-{\min}\{n-a_j,d_j\} \quad 1\leq j\leq r\}.
\end{align*}

Let $i_0$, $1\leq i_0\leq r$, be such that $n-a_{i_0}-
{\min}\{n-a_{i_0},d_{i_0}\}=m$ and $n-a_j-d_j<m$ for $j>{i_0}$.
For $m>0$ we have $\gcd(\deg P_{i_0},l^n)=l^{d_{i_0}}$, and 
therefore there exist $a, b\in
{\ma Z}$ such that $a\,\deg P_{i_0}+bl^n=l^{d_{i_0}}$. 
For $j<{i_0}$, we have $d_{i_0}\leq d_j$. Let $z_j:=-ac_jl^{d_j-d_{i_0}}$. 
For $j>{i_0}$, let $y_j\equiv -c_jc_{i_0}^{-1}\mod l^{n}\in{\ma Z}$.

Then $$\g E=F_1\cdots F_r,$$ where $F_j=E_j$ with $1\leq j\leq r$ 
if $m=t$, i.e, $\g E=E_{\eu {gex}}$, and if $m>t\geq 0$, then
\begin{gather}\label{knorr}
    F_j:=
    \begin{cases}
    k\left(\sqrt[l^{n-a_j}]{P_jP_{i_0}^{z_j}}\right)&\text{if $j<{i_0}$,}\\
    k\left(\sqrt[l^{d_{i_0}+t}]{P_{i_0}^*}\right)&\text{if $j={i_0}$,}\\
        k\left(\sqrt[l^{n-a_j}]{P_jP_{i_0}^{y_jl^{d_j-d_{i_0}}}}\right)&
    \text{if $j>{i_0}$ and $d_j\geq d_{i_0}$},\\
    k\left(\sqrt[l^{n-a_j+d_{i_0}-d_j}]{P_j^{l^{d_{i_0}-d_j}}P_{i_0}^{y_j}}
    \right)&\text{if $j>{i_0}$ and $d_{i_0}> d_j$.}
    \end{cases}
\end{gather}
\end{theorem}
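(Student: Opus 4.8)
The plan is to realize $\g E$ as an explicit fixed field inside $\ge E=E_{\eu{gex}}$ and then verify that $F_1\cdots F_r$ is that fixed field. By Theorem~\ref{T3.1} and Remark~\ref{R3.2}, $\ge E=k(\theta_1,\dots,\theta_r)$ with $\theta_j:=\sqrt[l^{n-a_j}]{P_j^*}$, and $\Gal(\ge E/k)\cong\prod_{j=1}^r{\ma Z}/l^{n-a_j}$ via $\sigma\mapsto(e_j(\sigma))_j$, where $\sigma\theta_j=\zeta_{l^{n-a_j}}^{e_j(\sigma)}\theta_j$ (the roots of unity lie in $\f\subseteq k$ since $l^n\mid q-1$). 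Since $E$ is geometric, $\sqrt[l^n]{D^*}$ equals $\prod_j\theta_j^{b_j}$ up to a constant in $\f$, so $H_E:=\Gal(\ge E/E)$ is the set of $(e_j)_j$ with $\sum_j b_jl^{a_j}e_j\equiv0\bmod l^n$. First I would record, via Theorem~\ref{T3.3} for the $E_j$ and Theorem~\ref{T2.1} for their compositum, that $\ge E$ is geometric with residue degree $1$ at $\p$, that $e_\infty(\ge E|k)=l^m$, and that the inertia group $I:=I_\infty(\ge E/k)$ equals the decomposition group of $\p$; a local Kummer computation at $\p$ (where $v_\infty(P_j^*)=-c_jl^{d_j}$ and the unit part is a constant up to $l^{n-a_j}$-th powers) shows $I=\langle\sigma_1\rangle$ with $e_j(\sigma_1)\equiv c_jl^{d_j}\bmod l^{n-a_j}$ for a suitable generator $\sigma_1$, of order $l^m$.

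Next I would establish the general fact that $\g E=(\ge E)^{I_0}$, where $I_0$ is the unique subgroup of $I$ of order $l^{m-t}$ (uniqueness by Proposition~\ref{PExtended5.4.1}). Indeed $\g E$ is the maximal subextension of the Hilbert class field $E_H$ which is abelian over $k$; any such extension has trivial constant field extension and no wild ramification at $\p$ (the infinite primes split in it), hence lies in $\ge E$, and inside $\ge E$ the conditions ``unramified over $E$ and $\p$ splits completely over $E$'' describe exactly the fixed field of the decomposition group of $\p$ in $\Gal(\ge E/E)=H_E$, namely of $I\cap H_E$; as $|I\cap H_E|=e_\infty(\ge E|k)/e_\infty(E|k)=l^{m-t}$ and $I$ is cyclic, $I\cap H_E=I_0$. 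In particular $[\g E:k]=[\ge E:k]/l^{m-t}$ and $e_\infty(\g E|k)=l^t$. For $m=t$ this gives $\g E=\ge E=E_{\eu{gex}}$, i.e. $F_j=E_j$, as claimed.

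Assume now $m>t$ and set $G:=F_1\cdots F_r$. I would first note the elementary inequalities implicit in the choice of $i_0$: $m=n-a_{i_0}-d_{i_0}$ and $d_{i_0}+t<n-a_{i_0}$, $d_j\ge d_{i_0}$ for $j<i_0$, and $a_j+d_j\ge a_{i_0}+d_{i_0}$ for all $j$ (with strict inequality for $j>i_0$). Then I would rewrite each $F_j=k(\zeta_j)$ with $\zeta_j\in\ge E$: namely $\zeta_{i_0}=\theta_{i_0}^{l^{m-t}}$ (so that $\zeta_{i_0}^{l^{d_{i_0}+t}}=P_{i_0}^*$, i.e. $F_{i_0}=k(\sqrt[l^{d_{i_0}+t}]{P_{i_0}^*})$), and for $j\ne i_0$, $\zeta_j=c_j\theta_j\theta_{i_0}^{w_j}$ for some $c_j\in\f$ and an integer $w_j$ with $v_l(w_j)=(a_j+d_j)-(a_{i_0}+d_{i_0})$, chosen so that $\zeta_j$ raised to the exponent appearing in {\rm(\ref{knorr})} returns the prescribed radicand; this is precisely where the definitions $z_j=-ac_jl^{d_j-d_{i_0}}$ (with $a\deg P_{i_0}+bl^n=l^{d_{i_0}}$) and $y_j\equiv-c_jc_{i_0}^{-1}\bmod l^n$ enter, and it yields $G\subseteq\ge E$. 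Counting: $\Gal(\ge E/G)=\{\sigma:\sigma\zeta_j=\zeta_j\text{ for all }j\}$; the condition at $i_0$ forces $e_{i_0}(\sigma)\equiv0\bmod l^{d_{i_0}+t}$, and for $j\ne i_0$ the condition then determines $e_j(\sigma)$ uniquely in terms of $e_{i_0}(\sigma)$, so $|\Gal(\ge E/G)|=l^{n-a_{i_0}-(d_{i_0}+t)}=l^{m-t}$, i.e. $[G:k]=[\g E:k]$. Finally I would check that a generator $\sigma_1^{l^t}$ of $I_0$ fixes every $\zeta_j$: for $\zeta_{i_0}$ this is immediate since $v_l(e_{i_0}(\sigma_1^{l^t}))=d_{i_0}+t$, and for $j\ne i_0$ one substitutes $e_j(\sigma_1^{l^t})\equiv c_jl^{d_j+t}$ and $e_{i_0}(\sigma_1^{l^t})\equiv c_{i_0}l^{d_{i_0}+t}$ into the fixing congruence, whereupon (writing $w_j=w_j'l^{v_l(w_j)}$ with $w_j'$ prime to $l$) it collapses to $l^{a_j+d_j+t}(c_j+w_j'c_{i_0})\equiv0\bmod l^n$; this holds because $c_j+w_j'c_{i_0}=c_j(1-ac_{i_0})=c_jbl^{n-d_{i_0}}$ for $j<i_0$ and $c_j+w_j'c_{i_0}\equiv c_j+y_jc_{i_0}\equiv0\bmod l^n$ for $j>i_0$. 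Hence $I_0\subseteq\Gal(\ge E/G)$ and, both having order $l^{m-t}$, they are equal, so $G=(\ge E)^{I_0}=\g E$.

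The main obstacle is the bookkeeping in the last paragraph: the piecewise definition {\rm(\ref{knorr})} of $F_j$ has four branches ($j<i_0$; $j=i_0$; $j>i_0$ with $d_j\ge d_{i_0}$; $j>i_0$ with $d_{i_0}>d_j$), and for each one must confirm, by tracking $l$-adic valuations of exponents and degrees, that the prescribed exponents keep $\zeta_j$ inside $\ge E$, give the correct $[G:k]$, and are annihilated by the generator of $I_0$. All three reductions are driven by the single prime $P_{i_0}$, which ``absorbs'' the residual ramification of $\p$ via the B\'ezout identity $a\deg P_{i_0}+bl^n=l^{d_{i_0}}$ (note $ac_{i_0}=1-bl^{n-d_{i_0}}$, so $a$ is prime to $l$); organizing the computation around $P_{i_0}$ is what makes the verification manageable.
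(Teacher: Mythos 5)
The paper does not prove this theorem at all: it simply cites \cite[Theorem 3.2]{MoReVi2019}, so your proposal has to be judged against that reference rather than against an in-text argument. Your strategy --- identify $\ge E=k(\theta_1,\dots,\theta_r)$ with its Kummer Galois group $\prod_j{\ma Z}/l^{n-a_j}$, show $\g E=(\ge E)^{I_0}$ for $I_0=I\cap H_E$ the decomposition group of $\p$ in $\ge E/E$ (cyclic of order $l^{m-t}$), and then verify that $F_1\cdots F_r$ sits inside $\ge E$, has the right degree, and is fixed by $I_0$ --- is sound, and the key computations check out: for instance, for $j<i_0$ one has $\zeta_j=\theta_j\theta_{i_0}^{w_j}$ with $w_j=z_jl^{a_j-a_{i_0}}$, $v_l(w_j)=(a_j+d_j)-(a_{i_0}+d_{i_0})\ge 0$, and the fixing condition for $\sigma_1^{l^t}$ reduces via $1-ac_{i_0}=bl^{n-d_{i_0}}$ to $bc_jl^{n+t+a_j+d_j-d_{i_0}}\equiv 0\bmod l^n$, which holds since $d_j\ge d_{i_0}$; the degree count $|\Gal(\ge E/G)|=l^{n-a_{i_0}-d_{i_0}-t}=l^{m-t}$ also works because the solvability condition $v_l(w_j)+v_l(e_{i_0})+a_{i_0}\ge a_j$ follows from $v_l(e_{i_0})\ge d_{i_0}+t$. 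This is essentially the dual, Kummer-generator formulation of the Dirichlet-character argument in \cite{MoReVi2019}; what it buys is that the four branches of (\ref{knorr}) all become the single statement ``$\zeta_j=\theta_j\theta_{i_0}^{w_j}$ with $v_l(w_j)=(a_j+d_j)-(a_{i_0}+d_{i_0})$,'' at the cost of an explicit local computation at $\p$.

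Two points need shoring up. First, your justification that $\g E=(\ge E)^{I_0}$ --- ``any abelian-over-$k$ subextension of $E_H$ has trivial constant extension and no wild ramification, hence lies in $\ge E$'' --- is not an argument; the containment in $\ge E$ requires the Kronecker--Weber decomposition $K\subseteq{_n\cicl N{}_m}$ together with the splitting of $\p$ to kill both the $L_n$-part and the constant part, and you should either supply that or cite \cite{MaRzVi2013}/\cite[Theorem 2.2]{BaMoReRzVi2018}, which is exactly what the surrounding paper does. Second, the constant normalizations are glossed over in two places: the claim that a generator $\sigma_1$ of $I$ satisfies $e_j(\sigma_1)\equiv c_jl^{d_j}\bmod l^{n-a_j}$ \emph{simultaneously for all $j$} depends on writing each $P_j^*$ locally as a power of the single element $-T$ times a one-unit, and the passage from $k(\theta_j\theta_{i_0}^{w_j})$ to the fields $k(\sqrt[l^{n-a_j}]{P_jP_{i_0}^{z_j}})$ of (\ref{knorr}) (no star on the radicand) silently discards a sign that is an $l^{n-a_j}$-th power only under the hypotheses of Remark \ref{REx1.1+3} and Corollary \ref{CEx1.1+4}; the exceptional case $l=2$, $2^{\kappa+1}\nmid q-1$ there shows this is not automatic and must be tracked through each branch.
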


\begin{proof}
See \cite[Theorem 3.2]{MoReVi2019}.
\end{proof}

\begin{remark}\label{REx4.11+1}{\rm{
When $m=t$, we may also use the description of $\g K$ given in 
the case $m>t$.
}}
\end{remark}

\subsubsection{The genus field in the general case}\label{S3.2}

The genus field of a general $l^n$ cyclic extension of $k$ is given 
by the following theorem.

\begin{theorem}\label{Tprin} 
Let $K =k(\sqrt[l^n]{\gamma D})\subseteq \cicl D{}_u$, with $\gamma\in \F^*$, 
$D=P_1^{\alpha_1}\cdots P_r^{\alpha_r}$, $1\leq \alpha_j\leq l^n-1$, $\alpha_j=b_j
l^{a_j}$ with $\gcd(b_j,l)=1$, $1\leq j \leq r$, $P_1,\dots, P_r\in R_T^+$ 
different polynomials and some $u\in{\ma N}$.
We order the polynomials $P_1,\dots,P_r$ so that 
$0=a_1\leq\cdots\leq a_r\leq n-1$. Let $E=K_u\cap\cicl D{}$, $t$ as in 
Theorem {\rm{\ref{Pengln}}} and $\alpha=v_{l}(|\H|)$. Let $\H':=\H\mid_{\g E}$. Then 
$E_{\eu{ge}}^{\H'}=F_1\cdots F_{{i_0}-1}F_{{i_0}+1}\cdots F_r (\sqrt[l^{d_{i_0}+
(t-\alpha)}]{P_{i_0}^*})$ where $F_j$ are given in {\rm{(\ref{knorr})}} for all $j$. Thus
\[
 \g K=E_{\eu{ge}}^{\H'}K=\prod_{\stackrel{i=1}{i\neq i_0}}^r F_i K(\sqrt[l^{d_{i_0}
 +(t-\alpha)}]{P_{i_0}^*}).
\]

Further, if $d={\rm{min}}\{n,v_l(\deg D)\}$, we have
\[
|\H|=l^\alpha=[\F(\sqrt[l^n]{(-1)^{\deg D}\gamma})
:\F(\sqrt[l^d]{(-1)^{\deg D}\gamma})].
\]
\end{theorem}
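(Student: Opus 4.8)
The plan is to reduce the computation of $\g K$ to the already-known genus field $\g E$ of the cyclotomic piece $E$, via the identity $\g K=\g E^{\H}K$ recalled after \eqref{EcExtended1}, and then to understand how the decomposition group $\H$ of the infinite primes in $K\g E/K$ acts on $\g E$. First I would invoke Theorem \ref{Pengln} to write $\g E=F_1\cdots F_r$ with the $F_j$ as in \eqref{knorr}; since $\H$ restricted to $\g E$ must be a subgroup of the inertia group $I_\infty(\g E/k)$ (indeed $\H\subseteq I_\infty(E/k)$ and $\g E/E$ is unramified everywhere), and $I_\infty(\g E/k)$ is an $l$-cyclic group, Proposition \ref{PExtended5.4.1} tells us the lattice of subgroups is totally ordered. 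The key structural point is that among the generators $F_1,\dots,F_r$, only $F_{i_0}$ contributes ramification at $\p$ realizing the full exponent $l^m$ with $m=v_l(e_\infty(\g E|k))$; for the other indices $j\neq i_0$ the prime $P_{i_0}$ has been ``balanced in'' precisely so that $\p$ splits (or ramifies with smaller exponent) in $F_j/k$. Hence $\H'=\H|_{\g E}$ fixes $F_j$ for $j\neq i_0$ and acts faithfully only on the $P_{i_0}$-tower, cutting the radical $\sqrt[l^{d_{i_0}+t}]{P_{i_0}^*}$ down to $\sqrt[l^{d_{i_0}+(t-\alpha)}]{P_{i_0}^*}$ where $l^\alpha=|\H|$. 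This gives $\g E^{\H'}=F_1\cdots F_{i_0-1}F_{i_0+1}\cdots F_r\big(\sqrt[l^{d_{i_0}+(t-\alpha)}]{P_{i_0}^*}\big)$, and then $\g K=\g E^{\H'}K$ is the displayed product after absorbing the $F_j$ ($j\neq i_0$) and multiplying by $K$.

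To make the previous paragraph rigorous I would argue as follows. Let $I:=I_\infty(\g E/k)$, a cyclic $l$-group with $|I|=e_\infty(\g E|k)=l^m$, and let $J:=\Gal(\g E/F_1\cdots F_{i_0-1}F_{i_0+1}\cdots F_r)$, which by the construction of the $F_j$ is the subgroup of $\Gal(\g E/k)$ responsible for the $i_0$-th radical; one checks $J$ is contained in (a conjugate of) $I$ and in fact $I=I_{P_{i_0}\text{-part}}$ up to the tame structure, so by Proposition \ref{PExtended5.4.1} $\H'$ and $J$ are comparable and $\H'\subseteq J$ once we know $\H'$ fixes all the $F_j$, $j\neq i_0$. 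That last fact follows because $\H'\subseteq I_\infty(\g E/k)$ acts trivially on any subextension of $\g E/k$ in which $\p$ is unramified, and by Theorem \ref{T3.3} applied to the explicit radicals in \eqref{knorr}, the exponent $e_\infty(F_j|k)$ for $j\neq i_0$ is $l^{m_j}$ with $m_j<m$; combining this with the cyclicity of $I$, the inertia subgroup of $F_j$ inside $I$ is the unique one of index $l^{m-m_j}$, and the intersection over all $j\neq i_0$ together with the ordering forces $\H'$ into the $i_0$-tower. The resulting exponent drop from $l^{d_{i_0}+t}$ to $l^{d_{i_0}+t-\alpha}$ is just the statement that $\g E^{\H'}$ has $e_\infty=l^{t-\alpha}$ at the $P_{i_0}$-radical, i.e.\ $|I/\H'|=l^{m-\alpha}$ where $m=t$ in the relevant normalization (recall $m\ge t$ and in the genus field the relevant value is $t$).

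For the final formula $l^\alpha=|\H|=[\F(\sqrt[l^n]{(-1)^{\deg D}\gamma}):\F(\sqrt[l^d]{(-1)^{\deg D}\gamma})]$ with $d=\min\{n,v_l(\deg D)\}$, I would compute $|\H|$ directly. The group $\H$ is the decomposition group of an infinite prime of $K$ in $K\g E/K$, equivalently, by the remark following \eqref{EcExtended1}, it measures how much the residue field grows — that is, $|\H|$ equals the degree of the constant-field extension that $\g E$ forces on top of $K$ at $\infty$, which by local class field theory at $\p$ is governed by the sign of the local norm. Concretely, writing $K=\Ku{l^n}{\gamma D}$ and passing to the completion $k_\infty$ with uniformizer $\pi_\infty=1/T$, we have $\gamma D\equiv (\text{unit})\cdot\gamma\cdot(-1)^{\deg D}\pi_\infty^{-\deg D}\pmod{U_\infty^{(1)}}$ up to $l^n$-th powers (using Corollary \ref{CEx1.1+4} to replace $D^*$ by $D$ and pull out $(-1)^{\deg D}$), so the local Kummer extension $K_\infty/k_\infty$ is, up to the tamely ramified $\pi_\infty^{-\deg D}$ part whose ramification is already accounted for, the constant extension $\F(\sqrt[l^n]{(-1)^{\deg D}\gamma})/\F$; Lemma \ref{LEx1.4} and Corollary \ref{CEx1.6} identify these as $\F_{q^{l^s}}$. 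Subtracting the part already present in $E$ (whose infinite behavior involves only $\sqrt[l^d]{(-1)^{\deg D}\gamma}$ after the ramified radical is removed, $d=\min\{n,v_l(\deg D)\}$ being exactly the $l$-valuation at which $\pi_\infty^{-\deg D}$ becomes an $l$-power times a constant) gives $|\H|=[\F(\sqrt[l^n]{(-1)^{\deg D}\gamma}):\F(\sqrt[l^d]{(-1)^{\deg D}\gamma})]$. The main obstacle I anticipate is precisely this local bookkeeping at $\p$: disentangling the tamely ramified contribution of $\pi_\infty^{-\deg D}$ from the unramified (constant-field) contribution of $(-1)^{\deg D}\gamma$, and checking that the ``$d$'' appearing is simultaneously the right cutoff in both the inertia computation (Theorem \ref{T3.3}) and the constant-field computation (Lemma \ref{LEx1.4}); everything else is an assembly of results already in hand.
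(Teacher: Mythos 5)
The paper does not actually prove Theorem \ref{Tprin}: its ``proof'' is the citation \cite[Theorem 4.1]{MoReVi2019}, so there is no in-paper argument to compare yours against line by line; I can only judge the proposal on its own terms and against the techniques the paper deploys for the neighbouring results. The structural half of your plan is essentially sound. By the explicit construction in (\ref{knorr}), each $F_j$ with $j\neq i_0$ is in fact \emph{unramified} at $\p$ (not merely ramified with a smaller exponent: one checks, e.g., that $v_l(\deg(P_jP_{i_0}^{z_j}))\geq n-a_j$ for $j<i_0$, and similarly in the other cases), so every such $F_j$ lies in the inertia field of $\p$ in $\g E/k$ and is fixed pointwise by $\H'\subseteq I_{\infty}(\g E/k)$. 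Consequently $\H'$ injects into $\Gal(F_{i_0}/k)$, which is cyclic, and $\g E^{\H'}$ is the compositum of the $F_j$, $j\neq i_0$, with the unique index-$l^{\alpha}$ subfield $k(\sqrt[l^{d_{i_0}+t-\alpha}]{P_{i_0}^*})$ of $F_{i_0}$. Two small corrections: $e_{\infty}(\g E|k)=l^{t}$ in all cases (not $l^{m}$), and once one knows the $F_j$, $j\neq i_0$, are unramified at $\p$, the appeal to Proposition \ref{PExtended5.4.1} and the discussion of ``comparable subgroups'' is unnecessary.

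The genuine gap is in the second half. The identity $|\H|=[\F(\sqrt[l^n]{(-1)^{\deg D}\gamma}):\F(\sqrt[l^d]{(-1)^{\deg D}\gamma})]$ is the quantitative heart of the theorem, and your proposal reduces it to ``local bookkeeping at $\p$'' that you explicitly defer; that bookkeeping is the actual content. Moreover the sketch misattributes where the two constant fields come from: the unramified part of $K_{\infty}/k_{\infty}$ is $\F\big(\sqrt[l^{d}]{\gamma}\big)$ (obtained by writing $K_{\infty}=k_{\infty}\big(\sqrt[l^n]{\gamma^{c_1}\pi_{\infty}^{l^{\delta}}}\big)$, exactly the computation of \S\ref{SEx6}), \emph{not} $\F\big(\sqrt[l^n]{(-1)^{\deg D}\gamma}\big)$; the latter is the constant field of $(EK)_{\infty}$, which arises because $EK=K\big(\sqrt[l^n]{\varepsilon}\big)$ with $\varepsilon=(-1)^{\deg D}\gamma$ is a constant-field extension while $\deg_E\p=1$. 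One then gets $|\H|=f_{\infty}(EK|K)$ as the quotient of the two constant-field degrees, using Corollary \ref{CEx1.1+4} to pass between $\gamma$ and $\varepsilon$ (with the $l=2$, $2^{d+1}\nmid q-1$ exception handled separately). None of these steps should fail, but as written the proposal asserts the formula rather than proving it: it is a credible plan for a proof, not yet a proof.
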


\begin{proof}
See \cite[Theorem 4.1]{MoReVi2019}.
\end{proof}

The general structure of $\ge K$ when $K/k$ is a finite
$l$--Kummer extension for a prime number $l$, is given by
$\ge K=DK$ with $D$ a field satisfying $\ge{(\g E^{\H})}
\subseteq D\subseteq \ge E$.

With notations given above, particularly in Theorem \ref{Pengln},
we consider first the case $m>t$. 

\begin{proposition}\label{P3.5(-1)}
If $m>t$ then $i\geq 2$ and
there exists $j<i$ such that $m=n-a_{j}-d_{j}=n-a_i-d_i$.
\end{proposition}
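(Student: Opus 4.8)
The plan is to unpack the definitions of $t$ and $m$ from Theorem~\ref{Pengln} and track where the inequality $m>t$ forces an index to live. Recall $t = n - \min\{n, v_l(\deg D)\}$ and
\[
m = \max\{\, n - a_j - \min\{n-a_j, d_j\} \;:\; 1\leq j\leq r \,\},
\]
with the ordering $0 = a_1 \leq \cdots \leq a_r \leq n-1$, so in particular $a_1 = 0$. The key observation is that $\deg D = \sum_{j=1}^r \alpha_j \deg P_j$, and since $\alpha_j = b_j l^{a_j}$ with $\gcd(b_j,l)=1$ and $\deg P_j = c_j l^{d_j}$ with $\gcd(c_j,l)=1$, the $l$-adic valuation of each summand is $v_l(\alpha_j \deg P_j) = a_j + d_j$. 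Hence $v_l(\deg D) \geq \min_j (a_j + d_j)$, and therefore $n - v_l(\deg D) \leq n - \min_j(a_j+d_j) = \max_j\big(n - a_j - d_j\big)$.

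First I would dispose of the index $i_0$ itself: for the index $i_0$ achieving the maximum $m$, we have $m = n - a_{i_0} - \min\{n-a_{i_0}, d_{i_0}\}$. If $\min\{n-a_{i_0},d_{i_0}\} = n - a_{i_0}$, then $m = 0 \leq t$, contradicting $m > t$; so $\min\{n-a_{i_0}, d_{i_0}\} = d_{i_0}$ and $m = n - a_{i_0} - d_{i_0}$. Next I would rule out $i_0 = 1$, i.e. prove $i \geq 2$ in the notation of the statement (here $i = i_0$). If $i_0 = 1$ then $a_{i_0} = a_1 = 0$, so $m = n - d_1$. But then from the valuation bound above and $\min\{n, v_l(\deg D)\} \leq v_l(\deg D)$ together with $v_l(\deg D) \geq a_1 + d_1 = d_1$ (taking the $j=1$ term), we would get $t = n - \min\{n, v_l(\deg D)\} \leq n - d_1 \cdot[\text{if } d_1 \leq n] = m$; one must handle the $\min$ with $n$, but since $m = n - d_1 \geq 0$ forces $d_1 \leq n$, this gives $t \leq m$ with equality unless $v_l(\deg D) > d_1$. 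The point is that when $i_0 = 1$, the single term $j=1$ in $\deg D$ has valuation exactly $d_1$ and no other term can cancel it down further in a way that helps — more precisely $v_l(\deg D) = d_1$ whenever $a_j + d_j > d_1$ for all $j \neq 1$, which I expect is exactly the configuration forced by $i_0 = 1$ being the unique maximizer; this would give $t = n - d_1 = m$, contradicting $m > t$. So $i_0 \geq 2$.

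The heart of the argument is then the existence of $j < i_0$ with $n - a_j - d_j = n - a_{i_0} - d_{i_0} = m$. Since $m$ is the maximum, we certainly have $n - a_j - d_j \leq m$ for every $j$ (using $n - a_j - \min\{n-a_j,d_j\} \geq n - a_j - d_j$). Suppose for contradiction that $n - a_j - d_j < m$ for all $j < i_0$; by the defining property of $i_0$ in Theorem~\ref{Pengln} we also have $n - a_j - d_j < m$ for all $j > i_0$. Thus $n - a_j - d_j < m = n - a_{i_0} - d_{i_0}$ for every $j \neq i_0$, i.e. $a_j + d_j > a_{i_0} + d_{i_0}$ for all $j \neq i_0$, so the term $\alpha_{i_0}\deg P_{i_0}$ in $\deg D = \sum_j \alpha_j \deg P_j$ has strictly smallest $l$-adic valuation, forcing $v_l(\deg D) = a_{i_0} + d_{i_0}$. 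Then $t = n - \min\{n, a_{i_0}+d_{i_0}\} = n - (a_{i_0}+d_{i_0}) = m$ (the $\min$ is $a_{i_0}+d_{i_0}$ because $m = n - a_{i_0} - d_{i_0} \geq 0$ means $a_{i_0}+d_{i_0}\leq n$), again contradicting $m > t$. Hence some $j < i_0$ satisfies $n - a_j - d_j = m$, and for that $j$ we automatically have $\min\{n-a_j,d_j\}=d_j$, so all the displayed equalities hold.

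The main obstacle I anticipate is the careful bookkeeping with the two $\min$ operations — the one defining $m$ term-by-term and the one with $n$ in the definition of $t$ — and in particular verifying rigorously the claim "the minimum-valuation term in a sum of $l$-adic valuations is unique $\Rightarrow$ the valuation of the sum equals that term," which is the standard ultrametric fact $v_l(x+y) = \min\{v_l x, v_l y\}$ when $v_l x \neq v_l y$, applied inductively to $r$ summands. Everything else is elementary manipulation of the inequalities among the $a_j, d_j$.
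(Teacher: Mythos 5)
Your proof is correct and takes essentially the same route as the paper's: assuming no other index attains $m=n-a_{i_0}-d_{i_0}$, the term $\alpha_{i_0}\deg P_{i_0}$ has strictly minimal $l$-adic valuation, so $v_l(\deg D)=a_{i_0}+d_{i_0}$ by the ultrametric inequality and hence $t=m$, a contradiction. Your explicit resolution of the $\min$ (showing $m=n-a_{i_0}-d_{i_0}$) and the tentative step in the $i_0=1$ case are fine; the latter follows immediately from the defining property $n-a_j-d_j<m$ for $j>i_0$ in Theorem \ref{Pengln}, which is exactly how the paper disposes of that case as well.
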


\begin{proof}
First assume that $i\geq 2$. Suppose that for all $1\leq j\leq i-1$
we have $n-a_j-d_j<n-a_i-d_i=m$. 

We have that for all $j\neq i$, $n-a_i-d_i>n-a_j-\min\{n-a_j,d_j\}
\geq n-a_j-d_j$. 
Thus
\begin{gather}
 n-a_j-d_j<n-a_i -d_i, \quad\text{so that}
\quad a_{i}+d_i< a_{j}+d_j\quad\text{for all}
\quad j\neq i. \label{Ec4.7}
\end{gather}

We have 
\begin{align*}
\deg D&=\sum_{j=1}^r \alpha_{j}\deg P_j=
\sum_{j=1}^r b_{j}l^{a_{j}}c_jl^{d_j}\\
&=b_{i} c_il^{a_{i}+d_i}+l^{a_{i}
+d_i+1}\Big(\sum_{\substack{j=1\\ j\neq i}}^r b_{j}c_j
l^{a_{j}+d_j-a_{i}-d_i-1}\Big).
\end{align*}
Hence $v_l(\deg D)=l^{a_{i}+d_i}$.
It follows that
\begin{gather*}
n-\min\{n,v_l(\deg D)\}\geq n-v_l(\deg D)
=n-a_{i}-d_i=m\quad\text{and}\\
t \geq n-\min\{n,v_l(\deg D)\} \geq m\geq t.
\end{gather*}
Therefore $m=t$ contrary to our assumption.
Thus, there exists $1\leq j\leq i-1$ with $n-a_j+d_j=
n-a_i-d_i=m$.

The same argument shows that if $i=1$, then $m=t$.
\end{proof}

From Theorems \ref{Pengln} and \ref{Tprin} 
we have that for all $j\neq i$, we have
$e_{P_j}(\g{E^{\H}}|k)=e_{P_j}(K|k)=e_{P_j}(E|k)$.
In the case when there exists $1\leq j\leq i-1$ such that
$n-a_j-d_j=n-a_i-d_i$ we obtain that
\begin{gather*}
\begin{align*}
e_{P_i}(E_j|k)&=e_{P_i}\big(\Ku{l^{n-a_j}}{P_jP_i^{-ac_jl^{d_j-d_i}}}|k\big)=
l^{n-a_j-d_j+d_i}\\
&=l^{n-a_i}=e_{P_i}(E|k)=e_{P_i}(K|k).
\end{align*}
\intertext{Hence}
e_{P_i}(\g{E^{\H}}|k)=e_{P_i}(K|k)=e_{P_i}(E|k).
\end{gather*}

From the above, the following result is immediate.
 
\begin{proposition}\label{P3.6(2)}
If there exists $1\leq j\leq i-1$ such that $n-a_j-d_j=
n-a_i-d_i$, in particular when $m>t$,
then $\ge{(\g{E^{\H}})}=\ge E$.
$\fin$
\end{proposition}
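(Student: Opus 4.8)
The plan is to reduce Proposition \ref{P3.6(2)} directly to Proposition \ref{PExtended4.1}, which already gives the desired conclusion $\ge{(\g{E^{\H}})}=\ge E$ as soon as one verifies the three equalities $e_{P_j}(\g E^{\H}|k)=e_{P_j}(E|k)=e_{P_j}(\g E|k)$ for every ramified finite prime $P_1,\dots,P_r$. So the whole task is a ramification-index bookkeeping argument, split according to whether $j=i$ (here $i=i_0$) or $j\neq i$.

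First I would dispose of the primes $P_j$ with $j\neq i_0$. From Theorems \ref{Pengln} and \ref{Tprin} the only component of $\g E$ (resp.\ $\g E^{\H}$) that differs from the corresponding component of $E_{\eu{gex}}$ is the $i_0$-th one: all the other $F_j$ in \eqref{knorr} still ramify $P_j$ with index $l^{n-a_j}=e_{P_j}(E|k)$, because the extra factor $P_{i_0}^{z_j}$ (or $P_{i_0}^{y_jl^{d_j-d_{i_0}}}$, or the rearranged radical in the last case) does not change the exponent of $P_j$ inside the radicand modulo $l$-th powers of the relevant root. Hence $e_{P_j}(\g E^{\H}|k)=e_{P_j}(\g E|k)=e_{P_j}(E|k)$ for all $j\neq i_0$, and since $E/k$ and $K/k$ have the same ramification at finite primes these also equal $e_{P_j}(K|k)$. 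This is exactly the block of computations already displayed in the paragraph preceding the statement, so I would just cite it.

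Next I would handle $P_{i_0}$, which is the real point. The hypothesis is that there exists $1\le j\le i_0-1$ with $n-a_j-d_j=n-a_{i_0}-d_{i_0}$; note this forces $i_0\ge 2$, and by Proposition \ref{P3.5(-1)} it is automatic when $m>t$. Fix such a $j$. Then $F_j=\Ku{l^{n-a_j}}{P_jP_{i_0}^{-ac_jl^{d_j-d_{i_0}}}}\subseteq\g E^{\H}$, and by Theorem \ref{T3.3} (with $s=l^{n-a_j}$, radicand exponent of $P_{i_0}$ equal to $-ac_jl^{d_j-d_{i_0}}$, and using $\gcd(a\,\deg P_{i_0},l)=1$ since $a\,\deg P_{i_0}\equiv l^{d_{i_0}}\bmod l^{d_{i_0}+1}$ after dividing by $l^{d_{i_0}}$) one gets
\[
e_{P_{i_0}}(F_j|k)=\frac{l^{n-a_j}}{\gcd(a c_j l^{d_j-d_{i_0}},\,l^{n-a_j})}=l^{n-a_j-(d_j-d_{i_0})}=l^{n-a_{i_0}},
\]
using precisely the relation $n-a_j-d_j=n-a_{i_0}-d_{i_0}$. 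Since $F_j\subseteq\g E^{\H}\subseteq\g E$ and $e_{P_{i_0}}(E|k)=l^{n-a_{i_0}}$ is itself an upper bound (because $\g E\subseteq\ge E$ and $\ge E/\g E$ is unramified at finite primes while $\g E/k$ can only ramify $P_{i_0}$ to the same degree it does in $E$, by Theorems \ref{Pengln}--\ref{Tprin}), we conclude $e_{P_{i_0}}(\g E^{\H}|k)=e_{P_{i_0}}(\g E|k)=e_{P_{i_0}}(E|k)=e_{P_{i_0}}(K|k)$. The expected main obstacle is this last step: one must be sure that adjoining the extra radical $\Ku{l^{d_{i_0}+(t-\alpha)}}{P_{i_0}^*}$ of Theorem \ref{Tprin} does not accidentally raise $e_{P_{i_0}}$ beyond $l^{n-a_{i_0}}$, i.e.\ that $d_{i_0}+(t-\alpha)\le n-a_{i_0}$; this follows from $t\le n-a_{i_0}$ (since $m\ge t$ and $m=n-a_{i_0}-d_{i_0}$ in the relevant case, so $t\le n-a_{i_0}-d_{i_0}\le n-a_{i_0}$) together with $\alpha\ge 0$, so $F_j$ already supplies the full index and nothing larger appears.

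Having established all three strings of equalities for every $P_1,\dots,P_r$, Proposition \ref{PExtended4.1} applies verbatim and yields $\ge{(\g{E^{\H}})}=\ge E$. The ``in particular'' clause for $m>t$ is then immediate from Proposition \ref{P3.5(-1)}, which guarantees the existence of the required index $j<i_0$ with $n-a_j-d_j=n-a_{i_0}-d_{i_0}$. This is the argument the displayed computations in the text were building toward, so the proof is essentially a one-line invocation once those computations are in place.
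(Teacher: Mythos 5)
Your proposal is correct and takes essentially the same approach as the paper: the paper's proof is precisely the ramification-index computations displayed immediately before the statement (the primes $P_j$ with $j\neq i_0$ via Theorems \ref{Pengln} and \ref{Tprin}, and $P_{i_0}$ via the subfield $F_j$ together with Theorem \ref{T3.3}), combined with Proposition \ref{PExtended4.1} and, for the ``in particular'' clause, Proposition \ref{P3.5(-1)}. The only superfluous step is your worry that the radical $k\big(\sqrt[l^{d_{i_0}+(t-\alpha)}]{P_{i_0}^*}\big)$ might over-ramify $P_{i_0}$: since $\g E^{\H}\subseteq \g E$ and $\g E/E$ is unramified at the finite primes, the upper bound $e_{P_{i_0}}(\g E^{\H}|k)\leq e_{P_{i_0}}(E|k)$ is automatic, so the lower bound coming from $F_j$ already forces equality.
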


The first main result on extended genus fields, is the following:

\begin{theorem}\label{TEx1.1}
With the above notations we have that $\ge K=\ge E K$, except in the
following case:
\l
\item $K\neq E$,
\item $\H\neq\{\Id\}$,
\item $t=m>0$,
\item $m=n-a_i-\min\{n-a_i,d_i\}>n-a_j-\min\{n-a_j,d_j\}$
for all $j\neq i$,
\end{list}
\end{theorem}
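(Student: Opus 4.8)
The plan is to reduce the statement to the analysis already prepared in Propositions~\ref{P3.5(-1)} and \ref{P3.6(2)} together with the structural description $\ge K=DK$ with $\ge{(\g E^{\H})}\subseteq D\subseteq\ge E$, and then to show that in every case \emph{except} the one listed we actually have $\ge{(\g E^{\H})}=\ge E$, which immediately forces $D=\ge E$ and hence $\ge K=\ge E K$. By Proposition~\ref{PExtended4.1} the equality $\ge{(\g E^{\H})}=\ge E$ holds as soon as $e_{P_j}(\g E^{\H}|k)=e_{P_j}(E|k)$ for every finite ramified prime $P_j$, so the whole argument is a case-by-case verification of these local equalities, read off from the explicit generators in \eqref{knorr} and from Theorem~\ref{Tprin}.

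First I would dispose of the trivial cases: if $K=E$, or if $\H=\{\Id\}$, then by the discussion following Definition~\ref{D3.10} (and the Artin--Schreier--Witt / non-Kummer subsections) we have $\g E^{\H}=\g E$ and $\ge{(\g E^{\H})}=\ge E$, so $\ge K=\ge E K$; this eliminates conditions (a) and (b). Next, when $m>t$, Proposition~\ref{P3.6(2)} already gives $\ge{(\g E^{\H})}=\ge E$ directly, so we are done; thus we may assume $m\le t$. Combined with $m=\max_j\{n-a_j-\min\{n-a_j,d_j\}\}$ and the formula $t=n-\min\{n,v_l(\deg D)\}$ from Theorem~\ref{Pengln}, one checks (essentially the computation in the proof of Proposition~\ref{P3.5(-1)}, run in the reverse direction) that $m\le t$ forces $m=t$: indeed $v_l(\deg D)\ge \min_j(a_j+d_j)\ge n-m$, whence $t\ge m$, and the only possibility consistent with $m\le t$ is $t=m$. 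So we are reduced to $t=m$, i.e. conditions (a), (b), (c) all hold, and it remains to examine condition (d).

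If (d) fails, i.e. the maximum defining $m$ is attained at two distinct indices, say at $i$ and at some $j\neq i$ with $n-a_j-\min\{n-a_j,d_j\}=n-a_i-\min\{n-a_i,d_i\}=m$, then I would argue as in the computation displayed just before Proposition~\ref{P3.6(2)}: using the generator $F_j=k(\sqrt[l^{n-a_j}]{P_jP_{i_0}^{z_j}})$ from \eqref{knorr} one computes $e_{P_i}(F_j|k)=l^{n-a_j-d_j+d_i}=l^{n-a_i}=e_{P_i}(E|k)=e_{P_i}(K|k)$, while for every finite prime $P_l$ with $l\neq i$ the equality $e_{P_l}(\g E^{\H}|k)=e_{P_l}(E|k)$ is immediate from Theorem~\ref{Tprin} (only the $i_0$-component of $\g E^{\H}$ differs from that of $\ge E$). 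Hence all local indices match and Proposition~\ref{PExtended4.1} yields $\ge{(\g E^{\H})}=\ge E$, so again $\ge K=\ge E K$. Putting these together, $\ge K=\ge E K$ holds whenever at least one of (a)--(d) fails, which is exactly the assertion.

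The main obstacle I anticipate is the bookkeeping in the reduction ``$m\le t\Rightarrow m=t$'' and, more importantly, verifying that when (d) holds (the genuinely exceptional case) the $i_0$-component really does drop — i.e. that $e_{P_{i_0}}(\g E^{\H}|k)=l^{d_{i_0}+(t-\alpha)}$ can be strictly smaller than $e_{P_{i_0}}(E|k)=l^{n-a_{i_0}}$, which is what makes $\ge{(\g E^{\H})}\subsetneq\ge E$ possible and hence what the exceptional clause is recording. That strict inequality requires $\alpha=v_l(|\H|)>t-(n-a_{i_0}-d_{i_0})=0$, i.e. $\H\neq\{\Id\}$ together with $t=m=n-a_{i_0}-d_{i_0}$, which is precisely the conjunction (b)$\wedge$(c)$\wedge$(d) (with (a) needed for $\H$ to be able to be nontrivial at all). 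So the content of the theorem is that these four conditions are jointly necessary for the anomaly, and the proof above shows they are also sufficient to exclude $\ge K=\ge E K$ only in that single configuration; the delicate point is matching the exponent $d_{i_0}+(t-\alpha)$ in Theorem~\ref{Tprin} against the local index of $E$, and confirming via Abhyankar's Lemma (Theorem~\ref{T2.1}) that no other finite prime is affected.
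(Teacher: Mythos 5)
Your overall strategy coincides with the paper's: use the sandwich $\ge{(\g E^{\H})}\subseteq D\subseteq \ge E$ with $\ge K=DK$, and show that whenever one of (a)--(d) fails the local equalities $e_{P_j}(\g E^{\H}|k)=e_{P_j}(E|k)$ hold at every finite ramified prime, so that Proposition \ref{PExtended4.1} gives $\ge{(\g E^{\H})}=\ge E$ and hence $\ge K=\ge E K$. The treatments of $\neg$(a), $\neg$(b), of $m>t$ via Propositions \ref{P3.5(-1)}--\ref{P3.6(2)}, and of $\neg$(d) via the displayed computation of $e_{P_i}(F_j|k)$ are all the same as in the paper.

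There is, however, one genuine omission. Condition (c) is $t=m>0$, so its negation splits into $t\neq m$ and $t=m=0$; your case analysis covers only the first alternative. After disposing of $m>t$ you write ``we are reduced to $t=m$, i.e.\ conditions (a), (b), (c) all hold,'' but $t=m$ does not imply $t=m>0$. The paper closes this with a separate observation: if $t=0$ then $\p$ is unramified in $E/k$ (and, since $m=0$ too, in $\g E/k$), so $\H\subseteq I_{\infty}(\g E/k)=\{\Id\}$ and one is back in case (b). You need that sentence, or your enumeration does not exhaust the negation of (a)$\wedge$(b)$\wedge$(c)$\wedge$(d). Two smaller points: in your reduction the displayed chain $v_l(\deg D)\geq\min_j(a_j+d_j)\geq n-m$ yields $t\leq m$, not ``$t\geq m$'' as written (the conclusion $t=m$ still follows once combined with the standing hypothesis $m\leq t$, so this is only a slip of direction); and your closing paragraph reads as if conditions (a)--(d) were shown to \emph{preclude} $\ge K=\ge E K$, which is not what the theorem asserts and is in fact contradicted later by Theorem \ref{TEx1.7} --- the statement here only leaves the exceptional configuration undecided.
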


\begin{proof}
If $\H=\{\Id\}$ the result follows from \cite[Theorem 4.5]{RzeVil2023}.
If $E=K$, $K$ is cyclotomic and therefore $\H=\{\Id\}$. If $m>t$, then
from Proposition \ref{P3.5(-1)}, we have that $e_{P_j}(\g E^{\H}|k)=e_{P_j}
(\g E|k)$ for all $1\leq j\leq r$
and therefore $\ge{(\g E^{\H})}=\ge E$. The result follows from
\cite[Theorem 4.5]{RzeVil2023}. If
$n-a_i-\min\{n-a_i,d_i\}=n-a_j-\min\{n-a_j,d_j\}$ for some
$j\neq i$, then $e_{P_j}(\g E^{\H}|k)=e_{P_j}
(\g E|k)$ for all $1\leq j\leq r$
and therefore $\ge{(\g E^{\H})}=\ge E$.
If $t=0$, we have that $\p$ in unramified in $E/k$ so that $\H=\{\Id\}$.
\end{proof}

\subsubsection{The especial case}\label{SEx6}

We now consider the especial case, that is, the exception
given in Theorem \ref{TEx1.1}. 

Let $K=k\big(\sqrt[l^{n}]{\gamma D},\big)$
be a geometric separable
extension of $k$, with $\gamma\in\f$ and let $D=\poly\in R_T$,
with $P_1,\ldots,P_r\in R_T^+$ distinct, $1\leq \alpha_j
\leq l^n-1$, $1\leq j\leq r$. Let $\alpha_j=l^{a_j}b_j$, $l\nmid
b_j$, $\deg P_j=c_jl^{d_j}$, $l\nmid c_j$. 
We assume that we have
the exception given in Theorem \ref{TEx1.1}. Let $E=
k\big(\sqrt[l^n]{D^*}\big)$, and $\deg D=l^{\delta}c$ with $l\nmid c$.
Then $e_{\infty}(K|k)=l^{n-
\delta}=l^t=l^m=l^{n-a_i-d_i}$, so that $\delta = a_i+d_i$.

Since $m=t>0$, we have $m=n-a_i-\min\{n-a_i,d_i\}=n-a_i-d_i$ and
$n-a_i-d_i > n-a_j-d_j$ for all $j\neq i$. We also have that $\varepsilon
:= (-1)^{\deg D}\gamma\notin (\f)^l$.

\begin{lemma}\label{LEx1.1+1}
We have $\g E=\g E^{\H} E$. It also holds that
$EK/E$ and $EK/K$ are extensions of
constants and $EK=E\kum{n}{\varepsilon}=K\kum{n}{\varepsilon}$.
That is, 
\[
EK=E\big(\rad{n}{\varepsilon}\big)=K\big(\rad{n}{\varepsilon}\big).
\]

We also have that $\g EK/\g K$ and $\g EK/\g E$ are 
extensions of constants. Furthermore, $\g EK=\g K\kum{n}{\varepsilon}=
\g K\big(\rad{n}{\varepsilon}\big)$ and $\g E K=
\g E\kum{n}{\varepsilon}=\g E\big(\rad{n}{\varepsilon}\big)$.
\end{lemma}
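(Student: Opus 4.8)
The plan is to treat the three assertions in order, each time reducing to a statement about constant field extensions via Theorem \ref{T3.3} and the hypothesis $\varepsilon:=(-1)^{\deg D}\gamma\notin(\f)^l$. First I would prove $\g E=\g E^{\H}E$: since $\H$ is (a subgroup of) the inertia group of $\p$ in $E/k$ and since by Proposition \ref{P3.6(2)} we are in the regime $m=t$ (the exception of Theorem \ref{TEx1.1}), the relevant ramification data at every finite prime $P_j$ satisfies $e_{P_j}(\g E^{\H}|k)=e_{P_j}(E|k)=e_{P_j}(\g E|k)$ except possibly at $P_{i_0}$, while $\p$ is unramified in $\g E/\g E^{\H}E$ (the ramification of $\p$ in $\g E$ already occurs in $E$ by the definition of $E$ and of $t$). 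So $\g E/\g E^{\H}E$ is everywhere unramified and is therefore a constant field extension; but $\g E^{\H}E\supseteq E$ and $\g E$ is contained in the cyclotomic-type field $\ge E$ whose constant field is already accounted for, forcing equality. I would spell this out using the Dirichlet-character computation as in the proof of Proposition \ref{PExtended4.1}.

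Next, for $EK/E$ and $EK/K$: write $K=k(\rad{0}{\gamma D})=k(\sqrt[l^n]{\gamma D})$ and $E=k(\sqrt[l^n]{D^*})=k(\sqrt[l^n]{(-1)^{\deg D}D})$. Then inside $EK$ we have $\sqrt[l^n]{\gamma D}$ and $\sqrt[l^n]{(-1)^{\deg D}D}$, whose ratio is an $l^n$-th root of $(-1)^{-\deg D}\gamma$, i.e. (up to an $l^n$-th root of unity, which lies in $\F$ since $l^n\mid q-1$) an $l^n$-th root of $\varepsilon$. Hence $EK=E(\rad{n}{\varepsilon})=K(\rad{n}{\varepsilon})$. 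By Lemma \ref{LEx1.4}, $\kum{n}{\varepsilon}=\F_{q^{l^s}}$ for the minimal $s$ with $\varepsilon^{l^s}\in\f$; combined with $\varepsilon\notin(\f)^l$ and Theorem \ref{TEx1.6} this gives $s=n$, so $\F(\rad{n}{\varepsilon})=\F_{q^{l^n}}$ is a constant field extension of $k$. Therefore adjoining $\rad{n}{\varepsilon}$ to $E$ (resp. to $K$) is a constant field extension, which is what is claimed; I should check that $\rad{n}{\varepsilon}\notin E$ and $\notin K$ so the extension is nontrivial of the stated degree, which again follows from $\varepsilon\notin(\f)^l$ and Theorem \ref{T3.3} applied to $e_\infty$.

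Finally, the genus-field versions follow by the same mechanism one level up. Since $\g E=\g E^{\H}E$ and $\g K=E_{\eu{ge}}^{\H'}K$ with the building blocks $F_j$ of Theorem \ref{Pengln}/\ref{Tprin} being common except at $P_{i_0}$, the field $\g E K$ is generated over $\g K$ by the single extra radical $\sqrt[l^{d_{i_0}+t}]{P_{i_0}^*}$ against $\sqrt[l^{d_{i_0}+(t-\alpha)}]{P_{i_0}^*}$, and their quotient is again an $l$-power root of a constant, namely of $\varepsilon$ after accounting for signs; more directly, $\g E K=\g E\cdot K$ and $\g E\cdot K/\g E\cdot E$-type arguments reduce it to $EK/E$ tensored up, so $\g E K=\g K(\rad{n}{\varepsilon})=\g E(\rad{n}{\varepsilon})$ and both are constant field extensions by the previous paragraph. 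The main obstacle I anticipate is the bookkeeping at the prime $P_{i_0}$: one must verify carefully that the "extra" generator distinguishing $\g E$ from $\g E^{\H}E$, and $\g E K$ from $\g K$, is exactly a root of $\varepsilon$ up to roots of unity in $\F$, so that no finite prime is actually ramified in the purported constant extension; this requires matching the exponents $t$, $\alpha$, $d_{i_0}$ in Theorem \ref{Tprin} against $s=n$ from Theorem \ref{TEx1.6}, and I would isolate that computation as the key step.
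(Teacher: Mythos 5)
Your proposal follows the paper's route for the central identities: the computation $EK=\Ku{l^n}{\gamma D}\cdot\Ku{l^n}{\*D}=E\big(\rad{n}{\varepsilon}\big)=K\big(\rad{n}{\varepsilon}\big)$ (valid because the $l^n$-th roots of unity already lie in $\F$, since $l^n\mid q-1$), the observation that adjoining $\rad{n}{\varepsilon}$ is a constant field extension, and the reduction of the genus-field statements to compositum bookkeeping, namely $\g EK=\g E^{\H}EK=(\g E^{\H}K)(EK)=\g K\big(\rad{n}{\varepsilon}\big)$ and $\g EK=\g E\cdot EK=\g E\big(\rad{n}{\varepsilon}\big)$ --- which is exactly your ``more directly'' parenthetical.

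Two points of divergence deserve comment. First, your justification of $\g E=\g E^{\H}E$ passes through ``everywhere unramified, therefore a constant field extension''; that implication is false in general (unramified abelian extensions need not be constant extensions --- that is the whole point of the Hilbert class field). The paper closes this step without it: since $\H\subseteq I_{\infty}(\g E/k)$, the extension $\g E/\g E^{\H}$ is \emph{fully ramified} at $\p$, hence so is its subextension $\g E/\g E^{\H}E$; on the other hand $m=t$ gives $e_{\infty}(\g E|k)=e_{\infty}(E|k)$, so $e_{\infty}(\g E|\g E^{\H}E)=1$, forcing $[\g E:\g E^{\H}E]=1$. You already have both ingredients on the table, so this is a repairable slip rather than a wrong approach, but as written the step would not survive scrutiny. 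Second, the ``key step'' you isolate at the end --- matching the exponents $t,\alpha,d_{i_0}$ at $P_{i_0}$ against $s=n$ --- is unnecessary, and your description of the extra generator of $\g EK/\g K$ as a quotient of the two radicals of $P_{i_0}^*$ being a root of $\varepsilon$ is not correct as stated: one of those radicals is a power of the other, and the corresponding extension of $k$ is ramified at $P_{i_0}$ and at $\p$, not a constant extension. Once $\g E=\g E^{\H}E$, $\g K=\g E^{\H}K$ and $EK=K\big(\rad{n}{\varepsilon}\big)$ are in hand, the genus-field identities follow formally and no further local analysis at $P_{i_0}$ is needed.
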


\begin{proof}
The extension $\g E/\g E^{\H}$ is fully ramified at the infinite prime $\p$.
Since $\g E^{\H}\subseteq \g E^{\H} E\subseteq \g E$ and since $e_{\infty}
(E|k)=e_{\infty}(\g E|k)$, it follows that $\g E=\g E^{\H} E$.

Now $EK=\Ku{n}{\gamma D}\Ku{n}{(1)^{\deg D}D} = E\big(\rad{n}{
\varepsilon}\big)=K\big(\rad{n}{\varepsilon}\big)$.

We also have 
$\g EK=\g E^{\H} EK=\g E^{\H}K EK=\g KK\big(\rad{n}{\varepsilon}
\big)=\g K\big(\rad{n}{\varepsilon}\big)$. Therefore
\begin{gather*}
\g EK=
\g EEK=\g E\big(\rad{n}{\varepsilon}\big).
\end{gather*}
\end{proof}

\begin{corollary}\label{CEx1.1+2}
The field of constants of $\g EK$ is $\kum{n}{\varepsilon}$. $\fin$
\end{corollary}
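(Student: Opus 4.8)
The plan is to read off the field of constants of $\g EK$ directly from Lemma \ref{LEx1.1+1}. By that lemma we have the equality $\g EK = \g E\big(\rad{n}{\varepsilon}\big) = \g K\big(\rad{n}{\varepsilon}\big)$, and moreover $\g EK/\g E$ and $\g EK/\g K$ are both extensions of constants. Since $\g E$ is a geometric extension of $k$ (recall $E = k(\sqrt[l^n]{D^*})$ is assumed geometric, and $\g E$ differs from $E$ only at the finite primes and at $\p$, not by constants, as $\g E \subseteq \ge E = \g E$ here and $\ge E/\g E$ is an extension of constants only in the reverse direction — so one must be a little careful), the constant field of $\g EK$ is generated over $\F$ by the constants appearing in $\rad{n}{\varepsilon}$.

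First I would observe that because $\g EK/\g E$ is an extension of constants, the field of constants of $\g EK$ is $\F_{q^\mu}$ where $\F_{q^\mu}$ is precisely the constant field of $\g E\big(\rad{n}{\varepsilon}\big)$ viewed as a constant extension of $\g E$; equivalently it is the constant field of $k\big(\rad{n}{\varepsilon}\big) = \kum{n}{\varepsilon}$, since adjoining $\rad{n}{\varepsilon}$ to the geometric field $\g E$ can only contribute the same constants it contributes over $k$. Then I would invoke Lemma \ref{LEx1.4} (and Remark \ref{REx1.5}): since $l^n \mid q-1$ and $\varepsilon \in \f$, we have $\kum{n}{\varepsilon} = \F_{q^{l^s}}$ for the minimal $s$ with $\mu^{l^s} \in \f$ where $\mu = \rad{n}{\varepsilon}$. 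Hence the field of constants of $\g EK$ equals $\kum{n}{\varepsilon} = \F_{q^{l^s}}$, which is exactly the asserted statement.

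The one point requiring care — and the only real obstacle — is justifying that adjoining $\rad{n}{\varepsilon}$ to $\g E$ contributes exactly the same constants as adjoining it to $k$, i.e. that $\g E$ and $k\big(\rad{n}{\varepsilon}\big)$ are linearly disjoint over $k$ in the relevant sense, or at least that $\g E \cap \kum{n}{\varepsilon} = k$. This follows because $\g E/k$ is geometric (its constant field is $\F$) while $\kum{n}{\varepsilon}/k$ is an extension of constants, and a geometric extension and a constant extension of a function field are always linearly disjoint over the base. Therefore the constant field of the compositum $\g E \kum{n}{\varepsilon} = \g EK$ is the constant field of $\kum{n}{\varepsilon}$, namely $\kum{n}{\varepsilon}$ itself. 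This is what the corollary claims, so no further computation is needed.

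\begin{proof}
By Lemma \ref{LEx1.1+1} we have $\g EK = \g E\big(\rad{n}{\varepsilon}\big)$ and this is an extension of constants of $\g E$. Since $E = k(\sqrt[l^n]{D^*})$ is geometric, so is $\g E/k$; on the other hand $\kum{n}{\varepsilon}/k$ is an extension of constants because $\varepsilon \in \f$. A geometric extension and a constant extension of $k$ are linearly disjoint over $k$, so $\g E \cap \kum{n}{\varepsilon} = k$ and the field of constants of $\g E\kum{n}{\varepsilon} = \g EK$ equals the field of constants of $\kum{n}{\varepsilon}$. Finally, since $l^n \mid q-1$ and $\varepsilon \in \f$, Lemma \ref{LEx1.4} gives $\kum{n}{\varepsilon} = {\ma F}_{q^{l^s}}$ for the minimal $s$ with $\big(\rad{n}{\varepsilon}\big)^{l^s} \in \f$; in particular $\kum{n}{\varepsilon}$ is its own field of constants. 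Hence the field of constants of $\g EK$ is $\kum{n}{\varepsilon}$.
\end{proof}
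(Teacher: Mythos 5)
Your proof is correct and follows the route the paper intends: the corollary is stated without proof precisely because, once Lemma \ref{LEx1.1+1} identifies $\g EK=\g E\big(\rad{n}{\varepsilon}\big)$ as a constant extension of the geometric (cyclotomic) field $\g E$, its field of constants must be $\F\big(\rad{n}{\varepsilon}\big)=\kum{n}{\varepsilon}$, which Lemma \ref{LEx1.4} identifies as ${\ma F}_{q^{l^s}}$. Your explicit justification via linear disjointness of a geometric extension and a constant extension over $k$ is exactly the standard fact the paper invokes implicitly, so the argument is sound and essentially the same.
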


\begin{theorem}\label{TEx1.1+4}
In the exceptional case given in Theorem {\rm{\ref{TEx1.1}}}, we have
that $\g E=\ge E$, the field of constants of $\g K$ is ${\mathbb F}_{
q^{\deg_K\p}}$, the field of constants of $\ge E K$ is $\kum{n}{
\varepsilon}$.
\end{theorem}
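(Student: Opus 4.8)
The plan is to establish the three assertions of Theorem \ref{TEx1.1+4} in sequence, each being essentially a bookkeeping consequence of the results already assembled for the exceptional case.

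First I would prove that $\g E=\ge E$. By Proposition \ref{PExtended5.4.2} we already know that $\ge E=\g E$ holds precisely when the inertia-group configuration forces it, so what remains is to check the hypotheses of that proposition in the exceptional case. Concretely, using Proposition \ref{PExtended4.1} and Corollary \ref{CExtended4.2}, I would observe that in the exceptional case $t=m>0$ with $m=n-a_i-d_i$ strictly larger than $n-a_j-\min\{n-a_j,d_j\}$ for all $j\neq i$; this is exactly the situation where the extended genus field of $\g E^{\H}$ can fail to equal $\ge E$, and then Proposition \ref{PExtended5.4.2} (whose proof uses Proposition \ref{PExtended5.4.1} on the $l$-cyclic inertia group $I=I_{\infty}(\ge E/k)$) gives $\ge E=\g E$ directly. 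So this first part is a citation of the machinery already in Section \ref{S3-1}.

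Second, for the field of constants of $\g K$: the extension $\g E K/\g K$ is, by Lemma \ref{LEx1.1+1}, an extension of constants with $\g E K=\g K\big(\rad{n}{\varepsilon}\big)$, and by Corollary \ref{CEx1.1+2} its field of constants is $\kum{n}{\varepsilon}=\F(\rad{n}{\varepsilon})$. Since $\g K/k$ is geometric would be too strong; rather, $\g K$ is obtained from $K$ by adjoining the maximal abelian subextension contained in $K_{H^+}$, so its constant field is governed by Theorem \ref{TEx1.2} applied to $K$ via Lemma \ref{LEx1.6+1}. The point is that $\deg_K\p$ — the residue degree of the infinite prime of $K$ — controls $\kappa$ in Theorem \ref{TEx1.2}: the minimal degree of an element of $U_K^+$ sitting in $\ker\phi_{\pL}$ with positive $\pL$-degree is exactly $\deg_K\p$ times the order of the sign, and since $K/k$ is geometric and $\H\neq\{\Id\}$ one computes this to be $\deg_K\p$ itself. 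Hence the field of constants of $\g K$ is ${\mathbb F}_{q^{\deg_K\p}}$. This is the step I expect to require the most care, because it demands matching the idelic description of $K_{H^+}$ in Lemma \ref{LEx1.6+1} against the concrete radical description of $K$, and in particular verifying that no smaller constant extension slips into $\g K$; the inequality $m>t$ being false here (we are in the case $t=m$) is what makes the sign obstruction nontrivial and forces the constant field to be strictly larger than $\F$.

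Third, the field of constants of $\ge E K$: by Lemma \ref{LEx1.1+1} again, $\ge E K=\g E K$ (using $\g E=\ge E$ from the first part), and $\g E K=\g E\big(\rad{n}{\varepsilon}\big)$ with field of constants $\kum{n}{\varepsilon}$ by Corollary \ref{CEx1.1+2}. So once the first two parts are in place, the third is immediate. I would close the proof by remarking that the three computed constant fields are consistent with Remark \ref{REx1.3}: $\ge E K$ is a constant extension of $\g K$, and $\kum{n}{\varepsilon}$ contains ${\mathbb F}_{q^{\deg_K\p}}$, so everything fits. The main obstacle, to reiterate, is the middle computation of $\deg_K\p$ as the exact constant-field degree of $\g K$ — the first and third assertions are essentially immediate from Lemma \ref{LEx1.1+1}, Corollary \ref{CEx1.1+2}, and Proposition \ref{PExtended5.4.2}.
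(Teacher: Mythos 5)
Your first and third assertions are handled essentially correctly, though the first by a detour: the paper obtains $\g E=\ge E$ in one line from Theorem \ref{Pengln}, since $m=t$ is exactly the case in which $\g E=E_{\eu{gex}}=\ge E$ there. Your route through Proposition \ref{PExtended5.4.2} is only valid under the standing hypothesis of that proposition, namely $\ge{(\g E^{\H})}\neq \ge E$ (equivalently, some finite prime ramifies in $\ge E/\g E^{\H}$), which is not among the defining conditions (a)--(d) of the exceptional case; so as written that step is conditional where the direct argument from $m=t$ is not. The third assertion does follow at once from Lemma \ref{LEx1.1+1} and Corollary \ref{CEx1.1+2} once $\g E=\ge E$ is known, as you say.

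The genuine gap is in your second part. You describe $\g K$ as coming from the maximal abelian subextension contained in $K_{H^+}$ and then compute its constant field via $U_K^+$ and $\ker\phi_{\pL}$ (Lemma \ref{LEx1.6+1}); but that machinery computes the field of constants of $K_{H^+}$, not of $K_H\supseteq \g K$. Carried out correctly --- as the paper does later in this subsection --- that computation yields ${\mathbb F}_{q^{l^{n-\delta+\lambda}}}={\mathbb F}_{q^{|\H|\deg_K\p}}$ for $K_{H^+}$, which is \emph{strictly larger} than ${\mathbb F}_{q^{\deg_K\p}}$ precisely because $\H\neq\{\Id\}$ in the exceptional case; your claim that the minimal degree of an element of $\ker\phi_{\pL}$ ``is $\deg_K\p$ itself'' is therefore false here, and indeed the whole point of the especial case is this discrepancy. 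The correct argument for $\g K$ goes through $K_H$: by Definition \ref{D3.9} the infinite primes of $K$ decompose fully in $K_H/K$, so the maximal constant subextension of $K_H/K$ is ${\mathbb F}_{q^{\deg_K\p}}K$ (equivalently, apply Theorem \ref{TEx1.2} to $U_K\*K$, where a uniformizer at an infinite place realizes the minimal degree $\deg_K\pL$); since ${\mathbb F}_{q^{\deg_K\p}}K\subseteq \g K\subseteq K_H$, the constant field of $\g K$ is exactly ${\mathbb F}_{q^{\deg_K\p}}$. The paper makes this explicit a little later via $K_{0,\infty}=k_{\infty}\big(\sqrt[l^{\delta}]{\gamma}\big)$ in Lemma \ref{LEx1.6+2}.
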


\begin{proof}
Since $m=t$, we have $\g E=\ge E$.
\end{proof}

Later on, we will see that the field of constants of $\g K$ is
$\kum{\delta}{\gamma}$.

We fix an infinite prime $\pL$ of $K$ and we denote $K_{\infty}
:=K_{\pL}$.

Since $K_{\infty}=k_{\infty}\big(\sqrt[l^n]{\gamma D}\big)$, $\deg D
=d=l^{\delta}c$
with $l\nmid c$, we have 
\begin{align*}
D(T)&=T^{d}+a_{d-1}T^{d-1}+\cdots+a_1T+a_0\\
&=T^d\Big(1+a_{d-1}\big(\frac 1T\big)+\cdots a_1
\big( \frac 1T\big)^{d-1}+a_0\big(\frac 1T\big)^d\Big)=
T^d D_1(1/T).
\end{align*}
We have that $D_1(1/T)\in U_{\infty}^{(1)}$ and since
$l$ is different to the characteristic, it follows that $(U_{\infty}^{
(1)})^{l^n}=U_{\infty}^{(1)}$. Therefore
\[
K_{\infty}=k_{\infty}\big(\sqrt[l^n]{\gamma D}\big)=
k_{\infty}\big(\sqrt[l^n]{\gamma T^dD_1(1/T)}\big)=
k_{\infty}\big(\sqrt[l^n]{\gamma T^{l^{\delta}c}}\big).
\]

Since $\gcd(l,c)=1$, there exists $c_1\in{\mathbb Z}$ such
that $cc_1\equiv -1\bmod l^n$. Thus
\[
K_{\infty}=
k_{\infty}\big(\sqrt[l^n]{\gamma^{c_1}T^{l^{\delta cc_1}}}\big)=
k_{\infty}\big(\sqrt[l^n]{\gamma^{c_1}(1/T)^{l^{\delta}}}\big)
=k_{\infty}\big(\sqrt[l^n]{\gamma^{c_1}\pi_{\infty}^{l^{\delta}}}\big).
\]

Now $[K_{\infty}:k_{\infty}]=e_{\infty}(K|k)f_{\infty}(K|k)=
l^{n-\delta}\deg_K \pL$. Set $K_0=k\big(\sqrt[l^{\delta}]{\gamma D}\big)
\subseteq K$. We have that $e_{\infty}(K_0|k)=\frac{l^{\delta}}
{\gcd(\deg D,l^{\delta})}=\frac{l^{\delta}}{\mcd({l^{\delta}c,l^{\delta})}}
=\frac{l^{\delta}}{l^{\delta}}=1$ and $e_{\infty}(K|K_0)=e_{\infty}(
K|k)=l^{n-\delta}=[K:K_0]$. Therefore $\p$ is fully ramified in
$K/K_0$.

\[
\xymatrix{
K=k\big(\sqrt[l^n]{\gamma D}\big)\ar@{-}[d]_{l^{n-\delta}}\\
K_0=k\big(\sqrt[l^{\delta}]{\gamma D}\big)\ar@{-}[d]\\ k
}
\qquad \qquad
\xymatrix{
K_{\infty}\ar@{-}[d]^{e_{\infty}(K|k)=
l^{n-\delta}}\\ K_{0,\infty}\ar@{-}[d]^{
f_{\infty}(K|k)=\deg_K\pL=l^{\lambda}}\\k_{\infty}
}
\]

We have $f_{\infty}(K|k)=f_{\infty}(K_0|k)=f_{\infty}(K_{0,\infty}
|k_{\infty})=\deg_K\pL$ and
\[
K_{0,\infty}=k_{\infty}\big(\sqrt[l^{\delta}]{\gamma D}\big)=
k_{\infty}\big(\sqrt[l^{\delta}]{\gamma T^d}\big)=
k_{\infty}\big(\sqrt[l^{\delta}]{\gamma T^{l^{\delta}c}}\big)=
k_{\infty}\big(\sqrt[l^{\delta}]{\gamma}\big).
\]

\begin{lemma}\label{LEx1.6+2}
The field of constants of $\g K$ is $\kum{\delta}{\gamma}$. $\fin$
\end{lemma}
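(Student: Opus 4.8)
The plan is to compute the field of constants of $\g K$ by combining the structural results already established, especially Lemma \ref{LEx1.1+1}, Corollary \ref{CEx1.1+2}, and the local analysis at $\p$ that was carried out just before the statement. We know from Lemma \ref{LEx1.1+1} that $\g EK = \g K\big(\rad{n}{\varepsilon}\big)$, i.e.\ $\g EK/\g K$ is an extension of constants, and from Corollary \ref{CEx1.1+2} that the field of constants of $\g EK$ is $\kum{n}{\varepsilon}$. Therefore the field of constants of $\g K$ is a subfield $\mathbb F_{q^{\mu}}$ of $\kum{n}{\varepsilon}$, and it suffices to determine $\mu$, equivalently to determine $\deg_K\p$, since $\g K/K$ is itself an extension of constants by the generalities on $\ge E K = \g E K$ being a constant extension of $\g K$ — so the field of constants of $\g K$ equals $\mathbb F_{q^{\deg_K\p}}$ (this is exactly the content recorded in Theorem \ref{TEx1.1+4}). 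Thus the whole lemma reduces to the identity $\deg_K\p = [\kum{\delta}{\gamma}:\F]$.

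The key computation is the local one already begun: with $\pL$ a fixed infinite prime of $K$ and $K_\infty = K_\pL$, we have shown $K_{0,\infty} = k_\infty\big(\rad{\delta}{\gamma}\big)$ where $K_0 = k\big(\rad{\delta}{\gamma D}\big)$, and that $\p$ is unramified in $K_0/k$ while $f_\infty(K|k) = f_\infty(K_0|k) = \deg_K\pL$. So the residue degree of $\p$ in $K$ equals the residue degree of the unramified extension $K_{0,\infty}/k_\infty$, which is $[k_\infty\big(\rad{\delta}{\gamma}\big):k_\infty]$. Since $\gamma\in\F$ and this is an unramified (hence constant-field) extension of the local field, that degree is precisely $[\F(\rad{\delta}{\gamma}):\F]$. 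Hence $\deg_K\p = \lambda$ with $q^{\lambda} = |\F(\rad{\delta}{\gamma})|$, and the field of constants of $\g K$ is $\F(\rad{\delta}{\gamma}) = \kum{\delta}{\gamma}$.

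Concretely the steps are: (i) invoke that $\g K/K$ is a constant extension so that its constant field is $\mathbb F_{q^{\deg_K\p}}$ (Theorem \ref{TEx1.1+4} and the discussion preceding it); (ii) use $f_\infty(K|k) = f_\infty(K_0|k)$, which follows because $\p$ is totally ramified in $K/K_0$ (already shown: $e_\infty(K|K_0) = l^{n-\delta} = [K:K_0]$); (iii) identify $f_\infty(K_0|k)$ with $[K_{0,\infty}:k_\infty]$ since $\p$ is unramified in $K_0/k$; (iv) use the explicit local description $K_{0,\infty} = k_\infty\big(\rad{\delta}{\gamma}\big)$ together with the fact that adjoining a radical of an element of $\F$ to $k_\infty$ produces exactly the constant extension $\F(\rad{\delta}{\gamma})\cdot k_\infty$, giving $\deg_K\p = [\F(\rad{\delta}{\gamma}):\F]$; (v) conclude, via Lemma \ref{LEx1.4} or Theorem \ref{TEx1.6} if one wants the degree in the clean form $l^s$, that the constant field of $\g K$ is $\kum{\delta}{\gamma}$.

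The main obstacle is step (iv): making rigorous that the residue field extension attached to $k_\infty\big(\rad{\delta}{\gamma}\big)/k_\infty$ is $\F(\rad{\delta}{\gamma})/\F$ rather than something smaller, i.e.\ that no ramification or collapsing occurs. This is where one uses that $\gamma$ is a unit at $\p$ with $\p$-adic value $0$, that $l\neq p$ so the extension is tame, and that $(U_\infty^{(1)})^{l^n} = U_\infty^{(1)}$ (already noted above) so the radical extension of $k_\infty$ is genuinely unramified and its residue degree is governed purely by the constant field; then Lemma \ref{LEx1.4} applies verbatim over $\F$ to pin down that degree. Everything else is bookkeeping with the tower $k\subseteq K_0\subseteq K$ and the already-verified ramification data at $\p$.
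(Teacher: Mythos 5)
Your proposal is correct and follows exactly the route the paper intends: the lemma is stated with no separate proof precisely because it is the combination of the preceding local computation $K_{0,\infty}=k_{\infty}\big(\sqrt[l^{\delta}]{\gamma}\big)$, the chain $f_{\infty}(K|k)=f_{\infty}(K_0|k)=[K_{0,\infty}:k_{\infty}]=[\F(\sqrt[l^{\delta}]{\gamma}):\F]$, and the general fact (recorded in Theorem \ref{TEx1.1+4}) that the constant field of $\g K$ is ${\ma F}_{q^{\deg_K\p}}$. Your extra care in step (iv), checking that the radical extension of $k_{\infty}$ is unramified because $\gamma$ is a unit and $l\neq p$, only makes explicit what the paper leaves implicit.
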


Let
\begin{gather*}
f_{\infty}(K|k)=\deg_K\pL=[K_{0,\infty}:k_{\infty}]=
[\F\big(\sqrt[l^{\delta}]{\gamma}\big):\F]=:l^{\lambda}.
\intertext{We also have}
EK=E\big(\sqrt[l^n]{\varepsilon}\big)=
K\big(\sqrt[l^n]{\varepsilon}\big).
\intertext{Then, $EK/E$ is an extension
of constants and, since $\deg_E\p=1$, it follows that}
f_{\infty}(EK|k)=f_{\infty}(EK|E)=[EK:E]=[\F\big(\sqrt[l^n]{
\varepsilon}\big):\F]=:l^{\nu}.
\end{gather*}

Now, $|\H|=f_{\infty}(EK|K)=\frac{f_{\infty}(EK|k)}{f_{\infty}(K|k)}=
\frac{l^{\nu}}{l^{\lambda}}=l^{\nu-\lambda}=:l^u$.
The field of constants of $\g K$ is ${\mathbb F}_{q^{l^{\lambda}}}$
and the field of constants of $\g E K$ is $\F\big(\sqrt[l^n]{
\varepsilon}\big)={\mathbb F}_{q^{f_{\infty}(EK|k)}}={\mathbb F}_{
q^{l^{\nu}}}$. We have ${\mathbb F}_q\big(\sqrt[l^{\delta}]{\gamma}
\big)={\mathbb F}_q\big(\sqrt[l^{\delta}]{\gamma^{c_1}}\big)=
{\mathbb F}_{q^{l^{\lambda}}}$. Therefore $[\F\big(\sqrt[l^{\delta}]{
\gamma^{c_1}}\big):\F]=l^{\lambda}$. From 
Theorem \ref{TEx1.6}, we obtain that
$\gamma^{c_1}\in (\f)^{l^{\delta-\lambda}}\setminus
(\f)^{l^{\delta-\lambda+1}}$.

Let $\gamma^{c_1}=\theta^{l^{\delta-\lambda}}$,
with $\theta\in\f$ and $\theta\notin (\f)^l$. Then
\[
K_{\infty}=k_{\infty}\big(\sqrt[l^n]{\gamma^{c_1}\pi_{\infty}^{
l^{\delta}}}\big)=k_{\infty}\big(\sqrt[l^n]{\theta^{l^{\delta-
\lambda}}\pi_{\infty}^{l^{\delta}}}\big)=k_{\infty}\big(\sqrt[l^{
n-\delta+\lambda}]{\theta\pi_{\infty}^{l^{\lambda}}}\big).
\]

The element $\xi:=\sqrt[l^{n-\delta+\lambda}]{\theta\pi_{\infty}^{
l^{\lambda}}}$ satisfies $\xi^{l^{n-\delta+\lambda}}=\theta
\pi_{\infty}^{l^{\lambda}}$, that is, $\xi$ is a root of $X^{l^{n-
\delta+\lambda}}-\theta\pi_{\infty}^{l^{\lambda}}\in
k_{\infty}[X]$. Since $[K_{\infty}:k_{\infty}]=l^{n-\delta+\lambda}$,
the polynomial 
\[
X^{l^{n-\delta+\lambda}}-\theta\pi_{\infty}^{l^{\lambda}}
\]
is irreducible. We also have $K_{0,\infty}=k_{\infty}\big(\sqrt[l^{
\delta}]{\gamma}\big)=k_{\infty}{\mathbb F}_{q^{l^{\lambda}}}$
and $K/K_0$ is fully ramified at $\p$.

Set $\tilde\Pi:=\xi=\sqrt[l^{n-\delta+\lambda}]{\theta
\pi_{\infty}^{l^{\lambda}}}$. Then $\tilde \Pi^{l^{n-\delta+\lambda}}=
\theta\pi_{\infty}^{l^{\lambda}}$ and
\begin{align*}
v_{\pL_{\infty}}\big(\tilde\Pi^{l^{n-\delta+\lambda}}\big)&=l^{n-\delta+
\lambda}v_{\pL_{\infty}}(\tilde\Pi)=e_{\infty}(K_{\infty}|k_{
\infty})v_{\infty}\big(\theta\pi_{\infty}^{l^{\lambda}}\big)\\
&=e_{\infty}(K|k)l^{\lambda}=l^{n-\delta} l^{\lambda}=l^{n-\delta
+\lambda}.
\end{align*}
Hence $v_{\pL_{\infty}}(\tilde\Pi)=1$ and $\tilde\Pi$ is prime 
element of $K_{\infty}$. We also have 
\[
\deg_{K_{\infty}}
\tilde\Pi=\deg_{K_{\infty}}\pL_{\infty}v_{\pL_{\infty}}(\tilde\Pi)=
\deg_K\p\cdot 1=l^{\lambda}.
\]

Now $\theta\notin (\f)^l$. Let $\zeta_{l^{
n-\delta+\lambda}}$ be a primitive $l^{n-\delta+\lambda}$-th
root of unity and set $\N_{\infty}:=\N_{K_{\infty}|k_{\infty}}$.
We have
\begin{align*}
\Irr(\tilde\Pi, X,k_{\infty})&=X^{l^{n-\delta+\lambda}}-\theta
\pi_{\infty}^{l^{\lambda}}=\prod_{j=0}^{l^{n-\delta+\lambda}-1}
\big(X-\zeta_{l^{n-\delta+\lambda}}^{j}\tilde \Pi\big).
\intertext{Thus}
\N_{\infty}\tilde \Pi&=\prod_{j=0}^{l^{n-\delta+\lambda}-1}\big(
\zeta_{l^{n-\delta+\lambda}}^{j}\tilde \Pi\big)=(-1)^{l^{n-\delta
+\lambda}} \prod_{j=0}^{l^{n-\delta+\lambda}-1}\big(-\zeta_{
l^{n-\delta+\lambda}}^{j}\tilde \Pi\big)\\
&=(-1)^{l^{n-\delta+\lambda}}
\big(-\theta\pi_{\infty}^{l^{\lambda}}\big)=(-1)^{l^{n-\delta+
\lambda}+1}\theta\pi_{\infty}^{l^{\lambda}}.
\end{align*}

Now we consider a generic element of $Y\in\*{K_{\infty}}$:
\begin{gather*}
Y=\tilde\Pi^{s}\Lambda w, \quad\text{with}\quad
s\in{\mathbb Z},\quad
\Lambda\in {\mathbb F}_{q^{l^{\lambda}}},\quad\text{and}
\quad w\in U_{K_{\infty}}^{(1)}.
\intertext{Then}
\begin{align*}
\N_{\infty}\tilde \Pi^{s}&=\big(\N_{\infty}\tilde\Pi\big)^{s}=
(-1)^{(l^{n-\delta+\lambda}+1)s}\theta^{s}\pi_{\infty}^{
l^{\lambda}s},\\
\N_{\infty}\Lambda&=\N_{K_{0,\infty}|k_{\infty}} 
\big(\N_{K_{\infty}|K_{0,\infty}}\Lambda\big)=
\N_{K_{0,\infty}|k_{\infty}}(\Lambda^{l^{l^{n-\delta}}})
=\big(\N_{K_{0,\infty}|k_{\infty}}
\Lambda\big)^{l^{n-\delta}},\\
\N_{\infty} w&= v\in U_{\infty}^{(1)}.
\end{align*}
\end{gather*}

It follows that 
\begin{align*}
\phi_{\pL_{\infty}}(Y)&=\phi_{\infty}(\N_{\infty}(Y))=
\phi_{\infty}\big((-1)^{(l^{n-\delta+\lambda}+1)s}\theta^{
s}\pi_{\infty}^{l^{\lambda}s}
(\N_{K_{0,\infty}|k_{\infty}}\Lambda)^{l^{n-\delta}}
v\big)\\
&=(-1)^{(l^{n-\delta+\lambda}+1)s}\theta^{s}\big(
\N_{K_{0,\infty}|k_{\infty}}\Lambda\big)^{l^{n-\delta}}\\
&=(-\theta)^{s}\big[(-1)^{l^{\lambda}}
\big(\N_{K_{0,\infty}|
k_{\infty}}\Lambda\big)\big]^{l^{n-\delta}}.
\end{align*}

Therefore $Y\in\ker\phi_{\pL_{\infty}}\iff \text{there exists $\Lambda
\in {\mathbb F}_{q^{l^{\lambda}}}$}$ such that
\[
(-\theta)^{s}\big[(-1)^{l^{\lambda}}
\big(\N_{K_{0,\infty}|
k_{\infty}}\Lambda\big)\big]^{l^{n-\delta}}=1.
\]
Now, $\N_{K_{0,\infty}|k_{\infty}}{\mathbb F}_{q^{l^{\lambda}}}
=\F$, thus $\N_{\infty}\*{{\mathbb
F}_{q^{l^{\lambda}}}}=\big(\f\big)^{l^{n-\delta}}$ and
$\Lambda\in {\mathbb F}_{q^{l^\lambda}}$. 
Hence $-\theta^{ s}\in (\f)^{l^{n-\delta}}$.

\subsubsection{Case $n=1$} This case was considered in
\cite{HeVi2023}.

\subsubsection{Case $n\geq 2$}. We now assume that $n>1$.
We always have that, since $\theta\notin (\f)^l$, then
$- \theta\notin (\f)^l$ because $n\geq 2$
and therefore $-1\in(\f)^l$ (see
Remark \ref{REx1.1+3}). Hence $-\theta^{ s}\in
(\f)^{l^{n-\delta}} \iff l^{n-\delta}|s$. That is $\ker \phi_{
\pL_{\infty}}=\{Y=\tilde\Pi^s\Lambda w\mid l^{n-\delta}|s\}$.

Because $\deg Y=\deg\big(\tilde{\Pi}^s\Lambda w\big)=
\deg\tilde{\Pi}\cdot v_{{\mathfrak P}_{\infty}}(Y)=l^{
\lambda}\cdot s$, it follows that
\[
\min\{\kappa\in{\mathbb N}
\mid\text{there exists $\tilde{\vec\alpha}\in B$ and $\deg\tilde{\vec
\alpha}=\kappa$}\}=l^{n-\delta +\lambda},
\]
and that the field of constants of $K_{H^+}$ is ${\mathbb F}_{q^{l^{
n-\delta+\lambda}}}$.

We have that $l^{\lambda}=[\kum {\delta}{\gamma}:\F]$, so that,
from Theorem \ref{TEx1.6}, we obtain
\begin{gather}\label{EcEx1}
\gamma\in (\f)^{l^{\delta-\lambda}}
\setminus (\f)^{l^{\delta-\lambda+1}}.
\end{gather}

On the other hand, the field of constants of $\g KE$ is
$\kum {n}{\varepsilon}$ and $[\kum {n}{\varepsilon}:\F]=
l^{\nu}$. Again, from Theorem \ref{TEx1.6}, we obtain that
\[
\varepsilon\in (\f)^{l^{n-\nu}}\setminus (\f)^{l^{n-\nu+1}}.
\]

We have $u=\nu-\lambda\geq 1$ so that $\nu\geq \lambda+1$
and $n-\nu\leq n-1$. Therefore $(-1)^{\deg D}=\pm 1\in
(\f)^{l^{n-\nu}}$. Hence $\gamma=(-1)^{\deg D}\varepsilon
\in (\f)^{l^{n-\nu}}$. It follows from (\ref{EcEx1})
that $n-\nu\leq \delta-\lambda$
and $n-\delta\leq \nu-\lambda$.

Thus, $e_{\infty}(K|k)=l^{n-\delta}|l^{\nu-\lambda}=l^u=|\H|$.
Since $|\H||e_{\infty}(K|k)=e_{\infty}(E|k)$, $l^u|l^{n-\delta}$.
Therefore, $\nu-\lambda\leq n-\delta$, so that $\nu-\lambda
=n-\delta$ and $\nu=n-\delta+\lambda$.  In particular, $|\H|=
l^u=l^{n-\delta}=e_{\infty}(K|k)$.

It follows that the
field of constants of $\g EK=\ge E K$ is $\kum{n}{\varepsilon}
={\ma F}_{q^{l^{\nu}}}={\ma F}_{q^{l^{n-\delta+\lambda}}}$.
In short, the field of constants of both,
$K_{H^+}$ and $\g EK$,
is ${\ma F}_{q^{l^{n-\delta+\lambda}}}=
{\ma F}_{q^{l^{u+\lambda}}}$. 

Thus $\g EK\subseteq K_{H^+}$ and
$\g EK\subseteq \ge K\subseteq \ge EK=\g EK$.
Therefore $\ge EK
\subseteq K_{H^+}$. Since $\ge K\subseteq \ge E K$,
we finally obtain that $\ge K=\ge E K$.

\begin{theorem}\label{TEx1.7}
In the especial case of Theorem {\rm{\ref{TEx1.1}}}, we
obtain $\ge K=\ge E K$. $\fin$
\end{theorem}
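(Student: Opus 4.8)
The plan is to prove that $K_{H^+}$ and $\ge E K$ have the same field of constants, and then to deduce $\ge K=\ge E K$ from Remark~\ref{REx1.3}. Recall that in the especial case we may write $K=\Ku{l^n}{\gamma D}\neq E=\Ku{l^n}{D^*}$ with $\gamma\in\f$, $\H\neq\{\Id\}$, $n\geq 2$ (the case $n=1$ being handled in \cite{HeVi2023}), $\deg D=l^{\delta}c$ with $l\nmid c$ so that $\delta=a_i+d_i$ and $e_{\infty}(K|k)=l^{n-\delta}=l^t=l^m>1$, and $\varepsilon:=(-1)^{\deg D}\gamma\notin(\f)^l$. By Theorem~\ref{TEx1.1+4}, Lemma~\ref{LEx1.1+1} and Corollary~\ref{CEx1.1+2}, $\ge E K=\g E K$ is a constant extension of $\g K$ whose field of constants is $\kum{n}{\varepsilon}={\ma F}_{q^{l^{\nu}}}$, where $l^{\nu}:=[\kum{n}{\varepsilon}:\F]$, and by Lemma~\ref{LEx1.6+2} the field of constants of $\g K$ is $\kum{\delta}{\gamma}={\ma F}_{q^{l^{\lambda}}}$, where $l^{\lambda}:=f_{\infty}(K|k)=\deg_K\pL$ for a fixed infinite prime $\pL$ of $K$. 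Hence it suffices to show that the field of constants of $K_{H^+}$ equals ${\ma F}_{q^{l^{n-\delta+\lambda}}}$ and that $\nu=n-\delta+\lambda$.

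The first assertion is a purely local computation. By Lemma~\ref{LEx1.6+1} together with Theorem~\ref{TEx1.2}, the field of constants of $K_{H^+}$ is ${\ma F}_{q^{\kappa}}$, where $\kappa$ is the least positive value attained by $\deg_{\pL}x$ on $\ker\phi_{\pL}$. First I would normalize the completion $K_{\pL}=k_{\infty}\big(\sqrt[l^n]{\gamma D}\big)$: the principal-unit part of $D$ at $\p$ is an $l^n$-th power in $k_{\infty}$ (as $l\neq p$), and choosing $c_1$ with $cc_1\equiv-1\bmod l^n$ reduces this to $K_{\pL}=k_{\infty}\big(\sqrt[l^n]{\gamma^{c_1}\pi_{\infty}^{l^{\delta}}}\big)$; writing $\gamma^{c_1}=\theta^{l^{\delta-\lambda}}$ with $\theta\in\f$, $\theta\notin(\f)^l$ (possible by Theorem~\ref{TEx1.6}, since $[\kum{\delta}{\gamma^{c_1}}:\F]=l^{\lambda}$) this becomes $K_{\pL}=k_{\infty}\big(\sqrt[l^{n-\delta+\lambda}]{\theta\pi_{\infty}^{l^{\lambda}}}\big)$. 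Then $\tilde\Pi:=\sqrt[l^{n-\delta+\lambda}]{\theta\pi_{\infty}^{l^{\lambda}}}$ is a prime element of $K_{\pL}$ with $\deg_{K_{\pL}}\tilde\Pi=l^{\lambda}$ and minimal polynomial $X^{l^{n-\delta+\lambda}}-\theta\pi_{\infty}^{l^{\lambda}}$ over $k_{\infty}$; computing $\N_{K_{\pL}|k_{\infty}}\tilde\Pi$ and applying $\phi_{\infty}$ gives, for a general $Y=\tilde\Pi^{s}\Lambda w$ with $\Lambda\in{\ma F}_{q^{l^{\lambda}}}^*$ and $w\in U_{K_{\pL}}^{(1)}$, that $\phi_{\pL}(Y)=(-\theta)^{s}\big[(-1)^{l^{\lambda}}\N_{K_{0,\infty}|k_{\infty}}\Lambda\big]^{l^{n-\delta}}$, where $K_0=\Ku{l^{\delta}}{\gamma D}\subseteq K$. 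Since $\N_{K_{0,\infty}|k_{\infty}}{\ma F}_{q^{l^{\lambda}}}^*=\f$, this shows $Y\in\ker\phi_{\pL}$ iff $(-\theta)^{s}\in(\f)^{l^{n-\delta}}$; because $n\geq 2$ forces $-1\in(\f)^l$ by Remark~\ref{REx1.1+3} and $\theta\notin(\f)^l$, we get $-\theta\notin(\f)^l$, and hence $(-\theta)^{s}\in(\f)^{l^{n-\delta}}$ precisely when $l^{n-\delta}\mid s$. As $\deg Y=l^{\lambda}s$, the minimal positive degree on $\ker\phi_{\pL}$ is $l^{n-\delta+\lambda}$, so the field of constants of $K_{H^+}$ is ${\ma F}_{q^{l^{n-\delta+\lambda}}}$.

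For the equality $\nu=n-\delta+\lambda$ I would use a chain of divisibilities: on one hand $|\H|=f_{\infty}(EK|K)=l^{\nu-\lambda}$ divides $e_{\infty}(K|k)=l^{n-\delta}$, so $\nu-\lambda\leq n-\delta$; on the other hand $\H\neq\{\Id\}$ gives $\nu-\lambda\geq1$, hence $l^{(n-\nu)+1}\mid l^n\mid q-1$ and $(-1)^{\deg D}\in(\f)^{l^{n-\nu}}$ by Remark~\ref{REx1.1+3}, so $\gamma=(-1)^{\deg D}\varepsilon\in(\f)^{l^{n-\nu}}$; combining this with $\gamma\in(\f)^{l^{\delta-\lambda}}\setminus(\f)^{l^{\delta-\lambda+1}}$ (Theorem~\ref{TEx1.6}) yields $n-\nu\leq\delta-\lambda$, i.e.\ $n-\delta\leq\nu-\lambda$; therefore $\nu=n-\delta+\lambda$. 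Consequently $K_{H^+}$ and $\ge E K=\g E K$ share the field of constants ${\ma F}_{q^{l^{n-\delta+\lambda}}}$, and since $\g E K=\g K\,{\ma F}_{q^{l^{n-\delta+\lambda}}}$ is a constant extension of $\g K\subseteq K_H\subseteq K_{H^+}$, Remark~\ref{REx1.3} gives $\ge E K\subseteq K_{H^+}$, whence $\ge E K\subseteq\ge K$; together with the always-valid inclusion $\ge K\subseteq\ge E K$ this forces $\ge K=\ge E K$. I expect the main difficulty to be the local sign computation in the middle paragraph, together with pinning down the exact value $\nu=n-\delta+\lambda$ from the divisibility relations among $n$, $\delta$, $\lambda$ and $\nu$.
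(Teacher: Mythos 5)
Your proposal is correct and follows essentially the same route as the paper: the local computation of $\ker\phi_{\pL}$ via the normalization $K_{\pL}=k_{\infty}\big(\sqrt[l^{n-\delta+\lambda}]{\theta\pi_{\infty}^{l^{\lambda}}}\big)$ to identify the field of constants of $K_{H^+}$ as ${\ma F}_{q^{l^{n-\delta+\lambda}}}$, the two-sided divisibility argument pinning down $\nu=n-\delta+\lambda$, and the conclusion via Remark \ref{REx1.3}. This matches the paper's own proof, which is exactly the development preceding the theorem statement.
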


\begin{corollary}\label{CEx1.8}
For any cyclic Kummer extension $\Ku{l^n}{\gamma D}$ of $k$,
we have $\ge K=\ge E K$. $\fin$
\end{corollary}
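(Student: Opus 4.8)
Corollary \ref{CEx1.8} asks to show that for any cyclic Kummer extension $K = \Ku{l^n}{\gamma D}$ of $k$ (with $l^n \mid q-1$), we have $\ge K = \ge E K$, where $E = \mc M K \cap \cicl D{}$. The plan is to observe that this is a direct consequence of Theorems \ref{TEx1.1} and \ref{TEx1.7}, which together exhaust all cases. The structure of the argument is a straightforward case split on the four hypotheses (a)--(d) in the statement of Theorem \ref{TEx1.1}.

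First I would recall that Theorem \ref{TEx1.1} states $\ge K = \ge E K$ holds \emph{unless} all four conditions hold simultaneously: (a) $K \neq E$; (b) $\H \neq \{\Id\}$; (c) $t = m > 0$; and (d) $m = n - a_i - \min\{n - a_i, d_i\} > n - a_j - \min\{n - a_j, d_j\}$ for all $j \neq i$. So if any one of these fails, we are done by Theorem \ref{TEx1.1} itself. The only remaining situation is the one in which all of (a)--(d) hold — but this is precisely the ``especial case'' isolated in Subsection \ref{SEx6}, and Theorem \ref{TEx1.7} establishes $\ge K = \ge E K$ in exactly that case. Hence $\ge K = \ge E K$ holds unconditionally for every cyclic Kummer extension.

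The proof I would write is therefore essentially one or two sentences: ``If the exceptional case of Theorem \ref{TEx1.1} does not occur, the result is Theorem \ref{TEx1.1}; if it does occur, the result is Theorem \ref{TEx1.7}. In either case $\ge K = \ge E K$.'' There is no genuine obstacle here — all the work has been done upstream, culminating in the constants-of-$K_{H^+}$ computation in Subsection \ref{SEx6} (comparing $\F_{q^{l^{n-\delta+\lambda}}}$ for both $K_{H^+}$ and $\g E K$) and the inclusion chain $\g E K \subseteq \ge K \subseteq \ge E K = \g E K$. The corollary is a bookkeeping statement packaging Theorems \ref{TEx1.1} and \ref{TEx1.7} into a single clean assertion about the Kummer case, to be used later when the four prime-type cases are assembled into the global abelian result. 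If I wanted to be slightly more careful I would note that cases where $\H = \{\Id\}$ reduce to the cyclotomic result of \cite{RzeVil2023}, and cases with $t = 0$ force $\p$ unramified in $E/k$ hence $\H = \{\Id\}$, so that the truly new content is confined to Theorem \ref{TEx1.7} — but all of this is already spelled out in the proof of Theorem \ref{TEx1.1}, so the corollary's proof need not repeat it.
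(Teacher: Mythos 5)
Your proposal is correct and matches the paper's (implicit) argument exactly: the corollary is stated with no separate proof precisely because Theorem \ref{TEx1.1} covers every case except the exceptional one, and Theorem \ref{TEx1.7} disposes of that exception, so the two theorems together give $\ge K=\ge E K$ unconditionally. Nothing is missing.
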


\subsection{Semi-Kummer case: $l^{\rho}|q-1$, $\rho\geq 1$
and $l^n\nmid q-1$}\label{SubsectionExtended4}

Recall that we only need to consider the case
$\ge{(\g E^{\H})}\neq \ge E$, or, equivalently, there exists
a finite prime $P_j$ ramified in $\ge E/\g E^{\H}$, and therefore $\ge E=\g E$ (see Proposition \ref{PExtended5.4.2} ). The
prime $\p$ is fully ramified in $\ge E/\g E^{\H}$.

\begin{lemma}\label{LExtended5.19}
We have that $f_{\infty}(K|k)=\deg_K\p$.
\end{lemma}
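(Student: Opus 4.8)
The plan is to obtain the identity as a direct consequence of two ingredients that are already at hand: the standing hypothesis (made at the beginning of Section~\ref{S3}) that $K/k$ is geometric, and the fact that $\p$ is the pole divisor of $T$ in $k=\F(T)$, so that $\p$ has degree $1$ and residue field $\F$. None of the specific features of the semi-Kummer situation (ramification of $\p$, $\H\neq\{\Id\}$, etc.) will be needed.

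First I would record that, since $K/k$ is cyclic, hence Galois, all prime divisors of $K$ above $\p$ share the same ramification index and the same residue degree, so that $e_{\infty}(K|k)$, $f_{\infty}(K|k)$ and the symbol $\deg_K\p$ --- the common degree, as a place of $K$, of those primes --- are well defined. Fix one such prime $\pL$ and write $K_{\pL}$ for the completion of $K$ at $\pL$. The residue field $\kappa(\pL)$ of $\pL$ is an extension of the residue field $\F$ of $\p$ of degree $f(\pL|\p)=f_{\infty}(K|k)$, so $\kappa(\pL)={\ma F}_{q^{f_{\infty}(K|k)}}$. On the other hand, by definition $\deg_K\pL$ is the degree of $\kappa(\pL)$ over the exact field of constants of $K$; since $K/k$ is geometric that field of constants is $\F$, whence
\[
\deg_K\p=\deg_K\pL=[\kappa(\pL):\F]=f_{\infty}(K|k),
\]
which is the assertion. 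Equivalently, one reads this off from the local extension $K_{\pL}/k_{\infty}$: there $[K_{\pL}:k_{\infty}]=e_{\infty}(K|k)\,f_{\infty}(K|k)$, the residue field is ${\ma F}_{q^{f_{\infty}(K|k)}}$, and its degree over the constant field $\F$ of $K$ is $f_{\infty}(K|k)$.

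There is no substantial obstacle here; the one point to keep in mind is that geometricity is being invoked for $K$ itself and cannot be replaced by geometricity of an auxiliary field. Indeed, in the exceptional situation studied in this subsection ($\ge{(\g E^{\H})}\neq\ge E$) the compositum $EK$ is, as a rule, not geometric --- its field of constants properly contains $\F$ --- so the analogue of Lemma~\ref{LExtended5.19} for $EK$ fails, and it is precisely the hypothesis that $K/k$ be geometric that makes the statement go through for $K$.
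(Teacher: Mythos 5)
Your proof is correct and follows essentially the same route as the paper's: both arguments combine the geometricity of $K/k$ with the fact that $\deg_k\p=1$ to conclude $\deg_K\p=f_{\infty}(K|k)\cdot\deg_k\p=f_{\infty}(K|k)$. Your version merely unpacks the residue-field identification $[\kappa(\pL):\F]=f_{\infty}(K|k)$ explicitly, which the paper leaves implicit.
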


\begin{proof}
Since the extension $K/k$ is geometric, and $\deg_k\p=1$,
we have $f_{\infty}(K|k)=f_{\infty}(K|k)\deg_k\p=\deg_K\p$.
\end{proof}

We use the following notation. Let $|\H|:=l^u=f_{\infty}(EK|K)$.
Since $\H$ is a quotient of $I$,
it follows that $l^u|l^{\rho}$ since $|I|=e_{\infty}(
\ge E|k)|q^{\deg_K\p}-1=q-1$. In particular $u\leq \rho$.

Set $l^{\lambda}:=\deg_K\p=f_{\infty}(K|k)$. We have
\[
\xymatrix{
&E\ar@{-}[r]\ar@{-}[d]&EK\ar@{-}[d]\\
&E\cap K\ar@{-}[dl]\ar@{-}[r]&K\\ k
}
\qquad\qquad
\begin{minipage}{5.5cm}
\begin{align*}
f_{\infty}(EK|E)&=f_{\infty}(EK|k)\\
&=f_{\infty}(EK|K)f_{\infty}(K|k)\\
&=|\H|\cdot \deg_K\p=l^{u+\lambda}.
\end{align*}
\end{minipage}
\]
Since $e_P(K|k)=e_P(E|k)$ for all $P\in R_T^+\cup\{\infty\}$,
from Abhyankar's Lemma we obtain that $e_P(EK|E)=1$
for all $P\in R_T^+\cup\{\infty\}$, that is, $EK/E$ is an unramified
extension. We will show that it is an extension of constants.

It is easy to see that $[K:k]=[E:k]$. We have, on the one hand
\begin{gather*}
[K:E\cap K]=\frac{[K:k]}{[E\cap K:k]}=\frac{[E:k]}{[E\cap K:k]}=[E:E\cap K].
\intertext{On the other hand}
[EK:E]=[K:E\cap K]\quad\text{and}\quad [EK:K]=[E:E\cap K].
\intertext{Therefore}
[EK:K]=[EK:E]=[E:E\cap K]=[K:E\cap K].
\end{gather*}

Because $EK/E$ and $EK/K$ are unramified extensions,
\[
e_P(E|E\cap K)=e_P(K|E\cap K)\quad\text{for all}\quad
P\in R_T^+\cup\{\infty\}.
\]

We have that $\g EK/\g K=\g E^{\H} K$ is an extension of constants of
degree $|\H|=l^u=[\g EK:\g K]$ (\cite[Theorem 2.2]{BaMoReRzVi2018}). 
We also have that the field of constants
of $\g K$ is ${\ma F}_{q^{\deg_K\p}}$. Hence, the field of constants of
$\g E K=\ge E K$ is ${\ma F}_{q^{\psi}}$ where $\psi=\deg_K\p \cdot
|\H|=l^{\lambda + u}=f_{\infty}(EK|k)$.

Let see that the field of constants of $EK$ is also 
${\ma F}_{q^{\psi}}={\ma F}_{q^{l^{\lambda+u}}}$, the same
of $\g EK$.

Since $K/k$ is tamely ramified, the conductor of constants 
(\cite[Theorem 3.1]{BaMoReRzVi2018})
is the minimum $\eta$ such that $K\subseteq \cicl N{}_{\eta}$.
In the notation of Theorem 3.1 and
Remark 3.2 of \cite{BaMoReRzVi2018}, we have that $\eta=
td$ where $t=f_{\infty}(K|k)=f_{\infty}(K|J)=\deg_K \p=l^{\lambda}$,
$d=f_{\infty}(EK|K)=f_{\infty}(\g EK|\g K)=|\H|=l^u$, 
and $J=K\cap{\:_n\cicl
N{}}=K\cap \cicl N{}=K\cap E$. Therefore
\begin{gather*}
\eta=l^{\lambda}\cdot l^u=l^{\lambda+u}=\psi.
\intertext{Furthermore, in the same Theorem 3.1 of 
\cite{BaMoReRzVi2018}, we have}
\eta=[K:J]=[K:K\cap E] (=[E:K\cap E]=[EK:E]=[EK:K]).
\end{gather*}

We have, see \cite{MaRzVi2013}
\[
\xymatrix{
E\ar@{-}[rr]\ar@{-}[dd]&&EK\ar@{-}[r]\ar@{--}[ddd]&(EK)_{
\eta}=E_{\eta}=K_{\eta}\ar@{-}[dd]\\
&K\ar@{-}[dl]\ar@{-}[ur]\\
E\cap K\ar@{-}[d]\ar@{-}[rrr]&&&(E\cap K)_{\eta}\ar@{-}[d]\\
k=k_{\eta}\cap E\ar@{-}[rr]&&k_{\sigma}\ar@{-}[r]&k_{\eta}
}
\]
If $EK\cap k_{\eta}=k_{\sigma}\subseteq k_{\eta}$,  then $K\subseteq
EK=k_{\sigma}E\subseteq k_{\sigma}\cicl N{}
=\cicl N{}_{\sigma}$. Since $\eta$ is the 
minimum, it follows that $\eta=\sigma$, and $EK=(EK)_{\eta}=
E_{\eta}=K_{\eta}$. Therefore, the field of constants of $EK$ is
${\ma F}_{q^{\psi}}={\ma F}_{q^{l^{\lambda +u}}}$.

\begin{proposition}\label{PExtended5.20}
The field of constants of either $\g EK$ or $EK$ is ${\ma F}_{
q^{l^{\lambda+u}}}$, where $l^{\lambda}=\deg_K\p$ and
$l^u=|\H|$. $\fin$
\end{proposition}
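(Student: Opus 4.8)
The plan is to observe that Proposition \ref{PExtended5.20} is essentially a bookkeeping summary of the two independent computations carried out in the paragraphs immediately preceding it, so the ``proof'' consists of assembling those two facts and noting they give the same answer. First I would recall that, by the argument using \cite[Theorem 2.2]{BaMoReRzVi2018}, the extension $\g EK/\g K$ is an extension of constants of degree $|\H|=l^u$, and that the field of constants of $\g K$ is ${\ma F}_{q^{\deg_K\p}}={\ma F}_{q^{l^{\lambda}}}$; composing a degree-$l^u$ constant extension on top of ${\ma F}_{q^{l^{\lambda}}}$ yields ${\ma F}_{q^{l^{\lambda+u}}}$, so the field of constants of $\g EK=\ge EK$ is ${\ma F}_{q^{l^{\lambda+u}}}$. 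This settles the $\g EK$ half.

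Next I would settle the $EK$ half using the conductor-of-constants computation. Since $K/k$ is tamely ramified, \cite[Theorem 3.1]{BaMoReRzVi2018} identifies the conductor of constants with the least $\eta$ for which $K\subseteq\cicl N{}_{\eta}$, and the same theorem together with \cite[Remark 3.2]{BaMoReRzVi2018} gives $\eta=td$ with $t=f_{\infty}(K|k)=\deg_K\p=l^{\lambda}$ and $d=f_{\infty}(EK|K)=|\H|=l^u$, hence $\eta=l^{\lambda+u}=\psi$. The diagram argument then shows $EK=k_{\sigma}E=\cicl N{}_{\sigma}$ forces $\sigma=\eta$ by minimality of $\eta$, so $EK=(EK)_{\eta}=E_{\eta}=K_{\eta}$ and the field of constants of $EK$ is exactly ${\ma F}_{q^{\psi}}={\ma F}_{q^{l^{\lambda+u}}}$.

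Finally I would combine the two halves: both $\g EK$ and $EK$ have field of constants ${\ma F}_{q^{l^{\lambda+u}}}$, which is precisely the assertion, and there is nothing further to verify. The main obstacle here is not any new mathematics — all the analysis (the norm computations in $K_{\infty}$, the identification of $K_{0,\infty}=k_{\infty}(\sqrt[l^{\delta}]{\gamma})$, and the conductor-of-constants bookkeeping) has already been done; the only care needed is to make sure the two descriptions of the exponent $\lambda+u$ genuinely refer to the same integers, i.e.\ that $l^{\lambda}=\deg_K\p$ is the same $\lambda$ appearing in $f_{\infty}(K|k)$ and that $l^u=|\H|$ is the same $u$ as in $f_{\infty}(EK|K)$, both of which hold by Lemma \ref{LExtended5.19} and the definition of $\H$ as the decomposition group of the infinite primes in $EK/K$. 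So the proof is a short citation-and-assembly argument rather than a computation.

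\begin{proof}
By \cite[Theorem 2.2]{BaMoReRzVi2018}, $\g EK/\g K$ is an extension of constants of degree $|\H|=l^u$, and the field of constants of $\g K$ is ${\ma F}_{q^{\deg_K\p}}={\ma F}_{q^{l^{\lambda}}}$. Therefore the field of constants of $\g EK=\ge EK$ is ${\ma F}_{q^{l^{\lambda+u}}}$.

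Since $K/k$ is tamely ramified, by \cite[Theorem 3.1 and Remark 3.2]{BaMoReRzVi2018} the conductor of constants of $K$ is $\eta=td$, where $t=f_{\infty}(K|k)=\deg_K\p=l^{\lambda}$ and $d=f_{\infty}(EK|K)=|\H|=l^u$; thus $\eta=l^{\lambda+u}$. Writing $EK\cap k_{\eta}=k_{\sigma}$, we have $K\subseteq EK=k_{\sigma}E\subseteq k_{\sigma}\cicl N{}=\cicl N{}_{\sigma}$, and the minimality of $\eta$ forces $\sigma=\eta$. Hence $EK=(EK)_{\eta}=E_{\eta}=K_{\eta}$ and the field of constants of $EK$ is ${\ma F}_{q^{l^{\lambda+u}}}$ as well.

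Combining the two computations, the field of constants of both $\g EK$ and $EK$ is ${\ma F}_{q^{l^{\lambda+u}}}$.
\end{proof}
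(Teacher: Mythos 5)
Your proposal is correct and follows essentially the same route as the paper: Proposition \ref{PExtended5.20} is indeed just a summary of the two computations in the preceding paragraphs, namely the constant-extension degree $|\H|=l^u$ of $\g EK/\g K$ over the constant field ${\ma F}_{q^{\deg_K\p}}$ of $\g K$ via \cite[Theorem 2.2]{BaMoReRzVi2018}, and the conductor-of-constants identity $\eta=td=l^{\lambda+u}$ forcing $EK=K_{\eta}$. Your assembly of these two facts, including the minimality argument giving $\sigma=\eta$, matches the paper's reasoning.
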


Furthermore, $[E_{\eta}:E]=\eta=\psi=l^{\lambda+u}=[K:J]=[K:E
\cap K]=[E:E\cap K]=[EK:K]=[EK:E]$.
\[
\xymatrix{
E\ar@/^1pc/@{-}[r]^{\eta}
\ar@{-}[d]_{\eta}&\phantom{
E_{\eta}=K_{\eta}=}
EK=E_{\eta}=K_{\eta}
\ar@{-}[d]^{\eta}\\
E\cap K\ar@{-}[r]_{\eta}&K
}
\qquad\qquad
\xymatrix{
EK\ar@{-}[d]_{l^u}^{\H}\ar@/^2pc/@{-}[dd]^{\eta={l^{\lambda+u}}}\\
E^{\H}K\ar@{-}[d]^{l^{\lambda}}\\ K
}
\]
We have that $EK/E$ is an extension of constants of degree $\eta$,
and the same is true is for the extension $EK/K$. The field of constants
of $E^{\H}K$ is ${\ma F}_{q^{l^{\lambda}}}$, that is, the same of the
field $\g K=\g E^{\H} K$.

Let ${\pL}_{\infty}$ be a prime above $\p$ and denote $K_{\infty}:=
K_{\pL_{\infty}}$, $k_{\infty}:=k_{\p}$. Let $e_{\p}(K|k)=l^{\tau}|q-1$,
that is, $l^{\tau}|l^{\rho}$ and $\tau\leq \rho$. We have
\[
[K_{\infty}:k_{\infty}]=e_{\infty}(K|k)f_{\infty}(K|k)=l^{\tau}\cdot
l^{\lambda}=l^{\tau+\lambda}.
\]

Let $k_{\infty}\subseteq F\subseteq K_{\infty}$ 
be the inertia field of $K_{\infty}/k_{\infty}$,
that is, $F:=K_{\infty}^{I_{\infty}(K_{\infty}/k_{\infty})}$ and
$[F:k_{\infty}]=l^{\lambda}$. We have  that $F/k_{\infty}$ is
unramified.

For each local field, there exists a unique unramified extension
of each degree (see \cite[Teorema 17.3.37]{RzeVil2017}).
Therefore $F=k_{\infty}{\ma F}_{q^{l^{\lambda}}}$, that is, $F/
k_{\infty}$ is an ``extension of constants'' of degree $l^{\lambda}$.
More precisely,
\[
{\mc O}_{\pL_{\infty}}/\pL_{\infty}\cong {\ma F}_{
q^{l^{\lambda}}}\quad {\text{and}}\quad  
\*F=\langle\pi_F\rangle\times \*{{\ma F}_{
q^{l^{\lambda}}}}\times U_F^{(1)},
\]
where $\pi_F=\pi_{\infty}=1/T$ is
a prime element of $\*F$.

\[
\xymatrix{
K_{\infty}\ar@{-}[d]\\F=k_{\infty} {\mathbb F}_{q^{l^{\lambda}}}
\ar@{-}[d]\\ k_{\infty}
}
\]

Let $\*K_{\infty}=\langle\tilde\Pi\rangle\times \*{\ma F}_{q^{l^{\lambda}}}
\times U^{(1)}_{K_{\infty}}$ where $\tilde\Pi$ is a prime element of 
$\*K_{\infty}$. Now, (see \cite[Teorema 17.3.14]{RzeVil2017}) we have
\[
1=v_{K_{\infty}}(\tilde\Pi)=\frac{e_{\infty}(K|k)}{[K_{\infty}:k_{\infty}]}
v_{k_{\infty}}(\N_{K_{\infty}/k_{\infty}}(\tilde\Pi))=\frac {1}{f_{\infty}(K|k)}v_{k_{\infty}}
(\N_{K_{\infty}/k_{\infty}}(\tilde\Pi)).
\]
Hence $v_{k_{\infty}}(\N_{K_{\infty}/k_{\infty}}(\tilde\Pi))=l^{\lambda}$.

We have
\[
\xymatrix{
K_{\infty}\ar@{-}[d]^{e_{\infty}=l^{\tau}} \\F\ar@{-}[d]^{f_{\infty}=
l^{\lambda}}\\ k_{\infty}
}
\qquad\qquad
\begin{minipage}{8cm}
$e_{\infty}=l^{\tau}|l^{\rho}$ and $l^{\rho}|q-1$. Therefore
the $l^{\tau}$-th primitive the unity $\zeta_{l^{\tau}}$ belongs to $F$.
Since $\H\subseteq I_{\infty}(\ge E/k)$, we also have $u\leq \tau$.
\end{minipage}
\]

It follows that $K_{\infty}/F$ is a Kummer extension, say $K_{\infty}=
F(\sqrt[l^{\tau}]{Y})$ for some $Y\in \*F=\langle\pi_{\infty}\rangle\times
\*{\ma F}_{q^{l^{\lambda}}}\times U_F^{(1)}$.

Let $Y=\pi_{\infty}^s\Lambda w$, with $s\in{\ma Z}$, $\Lambda\in
\*{\ma F}_{q^{l^{\lambda}}}$, and $w\in U_F^{(1)}$. Since $\gcd(l,p)=1$,
we have $U_F^{(1)}=(U_F^{(1)})^{l^{\tau}}$. We write $s=\alpha l^{\tau}+r$,
with $0\leq r< l^{\tau}$. Then, if $w_0^{l^{\tau}}=w$, we have
\[
K_{\infty}=F\big(\sqrt[l^{\tau}]{\pi_{\infty}^{\alpha
l^{\tau}+r}\Lambda w_0^{l^{\tau}}}
\big)=F\big(\sqrt[l^{\tau}]{\pi_{\infty}^r\Lambda}\big).
\]
Let $r=l^br_0$, with $0\leq b<\tau$ and $\gcd(l,r_0)=1$.
Set $F_1:=F\big(\sqrt[l^b]{\pi_{\infty}^{l^br_0}}\Lambda\big)=
F\big(\sqrt[l^b]{\Lambda}\big)$. Thus $F_1/F$ is unramified,
$F\subseteq F_1\subseteq K_{\infty}$ and $K_{\infty}/F$ is totally
ramified. It follows that $F_1=F$ and that $b=0$, that is, $\gcd(r,l)=1$.

Therefore $K_{\infty}=F\big(\sqrt[l^{\tau}]{\pi_{\infty}\theta}\big)$
for some $\theta\in \*{\ma F}_{q^{l^{\lambda}}}$. Set $\phi:=
\sqrt[l^{\tau}]{\pi_{\infty}\theta}$. Then $\phi^{l^{\tau}}=\pi_{\infty}
\theta$. Hence
\[
l^{\tau}v_{K_{\infty}}(\phi)=v_{K_{\infty}}(\phi^{l^{\tau}})=v_{K_{\infty}}(
\pi_{\infty}\theta)=v_{K_{\infty}}(\pi_{\infty})=e(K_{\infty}|F)v_{\infty}(
\pi_{\infty})=l^{\tau}\cdot 1.
\]
It follows that $v_{K_{\infty}}(\phi)=1$. Therefore we may take $\phi=
\tilde\Pi$ as a prime element of $F$.

Now we consider $E_{\infty}=
k_{\infty}\big(\sqrt[l^{\tau}]{\pi_{\infty}\mu}\big)$ for some
$\mu\in \*\F$. We have
\[
\xymatrix{
E_{\infty}\ar@{-}[rr]^{\eta=l^{\lambda+u}\phantom{xxxxx}}
\ar@{-}[d]_{l^u}\ar@/_3pc/@{-}[dd]_{l^{\tau}}
&&(EK)_{\infty}=E_{\infty}K_{\infty}
\ar@{-}[d]^{l^u}\\E_{\infty}\cap K_{\infty}
\ar@{-}[rr]^{l^{\lambda+u}}\ar@{-}[d]_{l^{\tau-u}}&&K_{\infty}\\
k_{\infty}\ar@{-}[rru]_{l^{\tau+\lambda}}
}
\]
Because $\ge E\subseteq \cicl N{}$ is cyclotomic, the field
of constants of $E_{H^+}$ is also $\F$. 

As before, $\vartheta:=\sqrt[l^{\tau}]{\pi_{\infty}\mu}$
is a prime element of $E_{\infty}$. We have
\begin{gather*}
X^{l^{\tau}}-\pi_{\infty}\mu=\prod_{i=0}^{l^{\tau}-1}\big(X-\zeta_{
l^{\tau}}^i\vartheta\big).
\intertext{Hence}
\prod_{i=0}^{l^{\tau}-1}\big(-\zeta_{l^{\tau}}^i\vartheta
\big)=(-1)^{l^{\tau}}
\prod_{i=0}^{l^{\tau}-1}\big(\zeta_{l^{\tau}}^i\vartheta\big)=
(-1)^{l^{\tau}}\N_{E_{\infty}/k_{\infty}}\vartheta =
-\pi_{\infty}\mu.
\intertext{Thus}
\N_{E_{\infty}/k_{\infty}}\vartheta=(-1)^{l^{\tau}+1}\mu\pi_{\infty}.
\end{gather*}

Since the field of constants of $E_{H^+}$ is $\F$, then, from
Theorem \ref{TEx1.2}, there exists an element of degree $1$ in $\*{
E_{\infty}}$ satisfying
\[
\phi_{E_{\infty}}(X)=\phi_{\infty}(\N_{E_{\infty}/k_{\infty}} (X))=1.
\]
Set $X=\vartheta^s\alpha w$ with $s\in {\ma Z}, \alpha\in
\f, w\in U_{E_{\infty}}^{(1)}$. Since $\deg X=\deg \pi_{\infty}
v_{\infty}(X)=1\cdot s=s$, it follows that $s=1$.
Furthermore $\N_{E_{\infty}/k_{\infty}}
(w)\in U_{\infty}^{(1)}=\big(U_{\infty}^{(1)}\big)^{l^{\tau}}$ and
$\N_{E_{\infty}/k_{\infty}}(\alpha)=\alpha^{l^{\tau}}$. Therefore
\[
1=\phi_{E_{\infty}}(X)=\phi_{\infty}(\N_{E_{\infty}/k_{\infty}}
(X))=\phi_{\infty}\Big(\big((-1)^{l^{\tau}+1}\mu\pi_{\infty}\big)
\alpha^{l^{\tau}}u^{l^{\tau}}\Big)=(-1)^{l^{\tau}+1}\mu\alpha^{l^{\tau}},
\]
where $u\in U_{\infty}^{(1)}$. Therefore $-\mu=(-\alpha)^{-l^{
\tau}}\in \big(\f\big)^{l^{\tau}}$ and since $\tau<n$, it follows
that $-1\in \big(\f\big)^{l^{\tau}}$. Thus $\mu\in \big(\f\big)^{
l^{\tau}}$ and $E_{\infty}=
k_{\infty}\big(\sqrt[l^{\tau}]{\pi_{\infty}\mu}\big)=
k_{\infty}\big(\sqrt[l^{\tau}]{\pi_{\infty}}\big)$.

We obtain
\begin{gather*}
E_{\infty}K_{\infty}=K_{\infty}k_{\infty}
\big(\sqrt[l^{\tau}]{\pi_{\infty}}\big)=F\big(
\sqrt[l^{\tau}]{\pi_{\infty}\theta},\sqrt[l^{\tau}]{\pi_{\infty}}\big)=
F\big(\sqrt[l^{\tau}]{\pi_{\infty}\theta},\sqrt[l^{\tau}]
{\theta}\big)=
K_{\infty}\big(\sqrt[l^{\tau}]{\theta}\big).
\end{gather*}

The field of the constants of $EK$ is ${\ma F}_{q^{l^{\lambda+u}}}$,
therefore $[{\ma F}_{q^{l^{\lambda}}}\big(\sqrt[l^{\tau}]{\theta}\big):
{\ma F}_{q^{l^{\lambda}}}]=l^{u}$. From Theorem \ref{TEx1.6}
we have that $\theta\in \big(\*{{\ma F}_{q^{l^{\lambda}}}}\big)^{
l^{\tau-u}}\setminus \big(\*{{\ma F}_{q^{l^{\lambda}}}}\big)^{
l^{\tau-u+1}}$.

In short, $K_{\infty}=F\big(\sqrt[l^{\tau}]{\pi_{\infty}\theta }\big)$ with
$\theta\in \big(\*{{\ma F}_{q^{l^{\lambda}}}}\big)^{
l^{\tau-u}}\setminus \big(\*{{\ma F}_{q^{l^{\lambda}}}}\big)^{
l^{\tau-u+1}}$, 
$F=k_{\infty}{\ma F}_{q^{l^{\lambda}}}$,
and $\tilde\Pi=\sqrt[l^{\tau}]{\theta\pi_{\infty}}$.

The irreducible polynomial of $\tilde\Pi$ over $F$ is $X^{l^{\tau}}-
\theta\pi_{\infty}\in F[X]$. Then $X^{l^{\tau}}-\theta\pi_{\infty}=
\prod_{j=0}^{l^{\tau}-1}(X-\zeta^j_{l^{\tau}}\tilde\Pi)$ and
\begin{align*}
\N_{K_{\infty}/F}\tilde\Pi&=\prod_{j=0}^{l^{\tau}-1}\zeta_{l^{\tau}}^j
\tilde\Pi=(-1)^{l^{\tau}}\prod_{j=0}^{l^{\tau}-1}(-\zeta_{l^{\tau}}^j
\tilde\Pi)=(-1)^{l^{\tau}}(-\theta\pi_{\pi_{\infty}})=(-1)^{l^{\tau}+1}
\theta\pi_{\infty},\\
\N_{K_{\infty}/k_{\infty}}\tilde\Pi&=\N_{F/k_{\infty}}(
\N_{K_{\infty}/F}\tilde\Pi)=\N_{F/k_{\infty}}((-1)^{l^{\tau}+1}\theta
\pi_{\infty})\\
&=(-1)^{(l^{\tau}+1)l^{\lambda}}(\N_{F/k_{\infty}}\theta)
\pi_{\infty}^{l^{\lambda}}.
\end{align*}
Now $\N_{F/k_{\infty}}\*{\ma F}_{q^{l^{\lambda}}}=\f$.
Therefore, $\N_{F/k_{\infty}}\theta\in
\big(\*{{\ma F}_{q}}\big)^{
l^{\tau-u}}\setminus \big(\*{{\ma F}_{q}}\big)^{
l^{\tau-u+1}}$.

Let's see the norm of an arbitrary element $X$ of $\*K_{\infty}$.
Let $X=\tilde\Pi^s\Lambda\omega$ with $s\in{\ma Z}$, 
$\Lambda\in\*{\ma F}_{q^{l^{\lambda}}}$, and $\omega\in
U_{K_{\infty}}^{(1)}$. Then
\begin{gather*}
\begin{align*}
\N_{K_{\infty}/k_{\infty}}\omega&=\omega_0\in U_{\infty}^{(1)},\\
\N_{K_{\infty}/k_{\infty}}\tilde\Pi^s&=(-1)^{(l^{\tau}+1)l^{\lambda}s}
\xi^s\pi_{\infty}^{l^{\lambda}s}\quad \text{with}\quad \xi=
\N_{F/k_{\infty}}\theta\in\big(\*{{\ma F}_{q}}\big)^{
l^{\tau-u}}\setminus \big(\*{{\ma F}_{q}}\big)^{
l^{\tau-u+1}},\\
\N_{K_{\infty}/k_{\infty}}\Lambda&=\N_{F/k_{\infty}}
(\N_{K_{\infty}/F}\Lambda)=\N_{F/k_{\infty}}\Lambda^{l^{\tau}}=
(\N_{F/k_{\infty}}\Lambda)^{l^{\tau}},\\
&\text{and}\quad
\N_{K_{\infty}/k_{\infty}}\*{\ma F}_{q^{l^{\lambda}}}=(\f)^{l^{\tau}}.
\end{align*}
\intertext{Therefore}
\N_{K_{\infty}/k_{\infty}}X=(-1)^{(l^{\tau}+1)l^{\lambda}s}\xi^s
\pi_{\infty}^{l^{\lambda}s}\big(\N_{F/k_{\infty}}\Lambda\big)^{
l^{\tau}}\omega_0,
\intertext{and}
\begin{align*}
\phi_{K_{\infty}}(X)&=\phi_{\infty}(\N_{K_{\infty}/k_{\infty}}(X))=
(-1)^{(l^{\tau}+1)l^{\lambda}s}\xi^s\big(\N_{F/k_{\infty}}
\Lambda\big)^{l^{\tau}}\\
&=\big((-1)^{l^{\lambda}}\xi\big)^s
\big[(-1)^{l^{\lambda}s}(\N_{F/k_{\infty}}\Lambda)\big]^{l^{\tau}}.
\end{align*}
\intertext{Now}
X\in\ker\phi_{K_{\infty}}\iff \big((-1)^{l^{\lambda}}\xi\big)^s
\big[(-1)^{l^{\lambda}s}(\N_{F/k_{\infty}}\Lambda)\big]^{l^{\tau}}=1.
\end{gather*}

In other words, 
\[
X\in\ker\phi_{K_{\infty}}\iff \text{\ there exists $
\Lambda\in\*{\ma F}_{q^{l^{\lambda}}}$ such that 
$\big(\N_{F/k_{\infty}}\Lambda\big)^{l^{\tau}}
=\Big(\pm \xi^{-1}\Big)^s$}.
\]
Since $l^{\rho}|q-1$ with $\rho\geq 1$ and $l^n\nmid q-1$,
we have $n\geq 2$. Thus $-1\in (\f)^{l^{\tau-u+1}}$.
Therefore $\xi^{s}\in (\f)^{l^{\tau}}$. Since
$\xi\in\big(\*{{\ma F}_{q}}\big)^{
l^{\tau-u}}\setminus \big(\*{{\ma F}_{q}}\big)^{
l^{\tau-u+1}}$, it follows that $l^u|s$ and $l^u$
is the minimum positive integer
with this property. For such $X$, we have
\[
\deg_{K_{\infty}}X=\deg_{K_{\infty}}\pL_{\infty}v_{K_{\infty}}(X)=
l^{\lambda}\cdot l^{u}=l^{\lambda+u}.
\]

Therefore, the field of constants of $K_{H^+}$ is ${\ma F}_{l^{
\lambda+u}}$, the same of $\ge E K$.

We have obtained our first main result

\begin{theorem}\label{TExtendido5.19}
Let $K/k$ be a geometric cyclic extension of degree $l^n$ with $l$ a prime
number and $n\geq 1$. Then, if $E$ is given by 
{\rm{(\ref{EcExtended1})}}, we have
\begin{gather*}
\ge K=\ge E K. \tag*{$\fin$}
\end{gather*}
\end{theorem}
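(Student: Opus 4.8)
The plan is to reduce the theorem to the four cases for the prime $l$ isolated at the start of Section \ref{S3}, each of which has already been analysed in Subsections \ref{SubsecEx1}--\ref{SubsectionExtended4}. Since $K/k$ is geometric and cyclic of degree $l^n$, exactly one of the following occurs: (1) $l=p$; (2) $l\neq p$ and $l\nmid q-1$; (3) $l^n\mid q-1$; or (4) $l^{\rho}\mid q-1$ with $1\leq\rho<n$ and $l^n\nmid q-1$. In every case the relevant object to control is $\H$, the decomposition group of the infinite primes in $K\g E/K$, which by \cite[Theorem 2.2]{BaMoReRzVi2018} sits inside the inertia group $I_{\infty}(\g E/k)$; and since $\ge K\subseteq\ge E K$ always (see \cite{RzeVil2023}), it suffices to establish the reverse inclusion in each case.

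First I would dispose of cases (1) and (2) simultaneously. Here $|\H|=f_{\infty}(EK|K)$ is a power of $l$ while $|I_{\infty}(\g E/k)|$ divides $q-1$; when $l=p$ or $l\nmid q-1$ this forces $\H=\{\Id\}$, as recorded in Subsections \ref{SubsecEx1} and \ref{SubsecEx2}. Then $\g E^{\H}=\g E$, so $\ge{(\g E^{\H})}=\ge E$, and the equality $\ge K=\ge E K$ follows from \cite[Theorem 4.5]{RzeVil2023}, exactly as in the proof of Theorem \ref{TEx1.1}.

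Case (3), the Kummer case, is already settled: it is precisely Corollary \ref{CEx1.8}, obtained by combining the generic statement of Theorem \ref{TEx1.1} with its exceptional case treated in Theorem \ref{TEx1.7}. So the only remaining work is case (4), the semi-Kummer case, and here I would read off the conclusion from the local analysis at $\p$ carried out throughout Subsection \ref{SubsectionExtended4}. That analysis exhibits $\ker\phi_{K_{\infty}}$ explicitly and shows that its elements of positive degree have degrees exactly the positive multiples of $l^{\lambda+u}$; hence, by Theorem \ref{TEx1.2} and Lemma \ref{LEx1.6+1}, the field of constants of $K_{H^+}$ is ${\ma F}_{q^{l^{\lambda+u}}}$, which by Proposition \ref{PExtended5.20} coincides with the field of constants of $\g E K=\ge E K$. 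Since $\ge E K=\g E K$ is an extension of constants of $\g K$ and $\g K\subseteq K_H\subseteq K_{H^+}$, Remark \ref{REx1.3} yields $\ge E K\subseteq K_{H^+}$, hence $\ge E K\subseteq\ge K$, and combined with $\ge K\subseteq\ge E K$ this gives $\ge K=\ge E K$.

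The assembly above is routine once the pieces are in place; the real obstacle, already overcome in Subsection \ref{SubsectionExtended4}, is the determination of $\ker\phi_{K_{\infty}}$ in case (4). This requires writing $K_{\infty}$ as a Kummer extension $F\big(\sqrt[l^{\tau}]{\theta\pi_{\infty}}\big)$ of the inertia field $F=k_{\infty}{\ma F}_{q^{l^{\lambda}}}$, pinning down the power class of $\theta$ via Theorem \ref{TEx1.6}, computing $\N_{K_{\infty}/k_{\infty}}$ on a general element $\tilde\Pi^{s}\Lambda\omega$, and using $n\geq 2$ (forced by $l^{\rho}\mid q-1$ and $l^n\nmid q-1$) so that $-1\in(\f)^{l^{\tau}}$ and the sign ambiguities in the norm do not obstruct the degree count.
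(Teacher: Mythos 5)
Your proposal is correct and follows the paper's own route: the theorem is proved there precisely by the four-way case split on $l$, with cases (1) and (2) killed by $\H=\{\Id\}$, case (3) being Corollary \ref{CEx1.8} (Theorems \ref{TEx1.1} and \ref{TEx1.7}), and case (4) resolved by the local computation of $\ker\phi_{K_{\infty}}$ at the inertia field $F=k_{\infty}{\ma F}_{q^{l^{\lambda}}}$ and the comparison of constant fields via Theorem \ref{TEx1.2}, Proposition \ref{PExtended5.20} and Remark \ref{REx1.3}. Your assembly, including the role of $n\geq 2$ in disposing of the sign $-1$ in the norm computation, matches the paper's argument in all essentials.
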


\section{General finite abelian extensions}\label{SEx3.2}

The main key to obtain the extended genus field of a finite
abelian extension, is the following:

\begin{lemma}\label{LEx2.1} Let $E_1$ and $E_2$ be two
finite cyclotomic extensions of $k$ and let $E=E_1E_2$. Then
$\ge E=\ge{(E_1)}\ge{(E_2)}$.
\end{lemma}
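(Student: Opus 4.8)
The plan is to translate the statement into the language of Dirichlet character groups and reduce it to an elementary computation with abelian groups. First I would pick $N\in R_T$ with $E_1,E_2\subseteq\cicl N{}$ (e.g.\ $N=N_1N_2$ where $E_i\subseteq\cicl{N_i}{}$), so that $E_1$, $E_2$ and $E=E_1E_2$ correspond respectively to subgroups $X_1$, $X_2$ and $X:=X_1X_2$ of the group of Dirichlet characters modulo $N$; here the equality $X=X_1X_2$ is precisely the statement that the composite of cyclotomic fields corresponds to the product of their character groups. By definition, $\ge{(E_i)}$ corresponds to $Y_i:=\prod_{P\in R_T^+}(X_i)_P$ and $\ge E$ corresponds to $Y:=\prod_{P\in R_T^+}X_P$, where for a character group $W$ we write $W_P=\{\chi_P\mid\chi\in W\}$ for its $P$-th component group. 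Since $\ge{(E_1)}\ge{(E_2)}$ corresponds to $Y_1Y_2$, it suffices to prove $Y=Y_1Y_2$.

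The first key step is the identity $X_P=(X_1)_P(X_2)_P$ for every $P\in R_T^+$: the assignment $\chi\mapsto\chi_P$ is a group homomorphism from the characters modulo $N$ onto the characters whose conductor divides the $P$-part of $N$, and a homomorphism carries the product subgroup $X_1X_2$ onto $(X_1)_P(X_2)_P$. The second key step is that the ``genus product'' $\prod_{P}$ commutes with products of character groups: the group of characters modulo $N$ is the internal direct product over $P\mid N$ of the groups of characters supported at $P$, so that a tuple with $P$-component $\chi_{1,P}\chi_{2,P}$ splits as the product of the tuple $(\chi_{1,P})_P$ and the tuple $(\chi_{2,P})_P$, giving $\prod_P\big((X_1)_P(X_2)_P\big)=\big(\prod_P(X_1)_P\big)\big(\prod_P(X_2)_P\big)$. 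Combining the two steps,
\[
Y=\prod_{P}X_P=\prod_{P}(X_1)_P(X_2)_P=\Big(\prod_P(X_1)_P\Big)\Big(\prod_P(X_2)_P\Big)=Y_1Y_2,
\]
which is exactly what was needed.

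I would also record the alternative, purely field-theoretic proof. The field $\ge{(E_1)}\ge{(E_2)}$ is cyclotomic and contains $E$; applying Theorem \ref{T2.1} (Abhyankar's Lemma) prime by prime, using that each $\ge{(E_i)}/E_i$ is unramified at the finite primes and that ramification at finite primes in cyclotomic extensions of $k$ is tame, one gets that $\ge{(E_1)}\ge{(E_2)}/E$ is unramified at every finite prime, hence $\ge{(E_1)}\ge{(E_2)}\subseteq\ge E$. For the reverse inclusion one checks that, at each finite prime $P$, the field $\ge{(E_1)}\ge{(E_2)}$ already realizes the full $P$-component group $X_P$ of $E$ (again by the homomorphism remark), so no proper cyclotomic extension of it can be unramified at the finite primes, forcing $\ge E\subseteq\ge{(E_1)}\ge{(E_2)}$. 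The only point that I expect to require genuine care is the bookkeeping that the dictionary ``field $\leftrightarrow$ character group'' really does send composita to products of character groups and the operation $\ge{(\,\cdot\,)}$ to $\prod_P(\,\cdot\,)_P$; once this is set up, the content of the lemma is the one-line group computation displayed above.
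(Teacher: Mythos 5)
Your main (character-theoretic) argument is correct, and it supplies a proof where the paper gives none: the paper's ``proof'' of Lemma~\ref{LEx2.1} is just the citation \cite[Proposition 6.3]{BaMoReRzVi2018}, and your computation is exactly the argument behind that proposition. Within the dictionary of Section~\ref{S2} --- subfields of $\cicl N{}$ correspond to subgroups of the Dirichlet character group, composita correspond to products of character groups, and $\ge F$ is by definition the field of $\prod_{P\in R_T^+}X_P$ --- the two observations you isolate, namely $(X_1X_2)_P=(X_1)_P(X_2)_P$ (image of a product of subgroups under the homomorphism $\chi\mapsto\chi_P$) and $\prod_P\bigl((X_1)_P(X_2)_P\bigr)=\bigl(\prod_P(X_1)_P\bigr)\bigl(\prod_P(X_2)_P\bigr)$ (valid because the character group modulo $N$ is the internal direct product of its $P$-components), do reduce the lemma to the displayed one-line identity, so the reduction is complete. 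One caveat concerns your secondary, ``purely field-theoretic'' sketch: the claim that ramification at finite primes in cyclotomic extensions of $k$ is tame is false in general, since $P$ is wildly ramified in $\cicl{P}{a}/k$ for $a\geq 2$ (the ramification index $q^{d(a-1)}(q^d-1)$, $d=\deg P$, is divisible by $p$), and Abhyankar's Lemma as stated in Theorem~\ref{T2.1} requires at least one of the two constituents to be tame at the prime in question. So that alternative route would need extra care (e.g.\ splitting off the wild part of the inertia, or reverting to the character computation) at any prime dividing both conductors to a power at least $2$; since you offer it only as a remark, this does not affect the validity of your proof.
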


\begin{proof}
See \cite[Proposition 6.3]{BaMoReRzVi2018}.
\end{proof}

As a consequence we obtain our final main result.

\begin{theorem}\label{TEx2.2}
Let $K/k$ be any geometric finite abelian extension. Then if $E$
is given in {\rm{(\ref{EcExtended1})}}, we have that 
\[
\ge K=\ge E K.
\]
\end{theorem}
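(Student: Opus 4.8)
The plan is to reduce the general abelian case to the cyclic prime-power case already settled in Theorem \ref{TExtendido5.19}, using the decomposition of a finite abelian group into cyclic factors of prime-power order together with the multiplicativity of the extended genus field operation on cyclotomic composita (Lemma \ref{LEx2.1}). First I would write $\Gal(K/k)\cong \prod_{i=1}^s C_i$ as a product of cyclic groups of prime-power order, and let $K_i$ be the subfield of $K$ fixed by $\prod_{j\neq i} C_j$, so that $K_i/k$ is cyclic of prime-power degree and $K=K_1\cdots K_s$. Each $K_i$ is geometric (its field of constants is contained in that of $K$, which is $\F$), so Theorem \ref{TExtendido5.19} applies to give $\ge{K_i}=\ge{E_i}\,K_i$, where $E_i={\mc M}_iK_i\cap\cicl{N_i}{}$ is the associated cyclotomic field from \eqref{EcExtended1}.

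Next I would assemble these pieces. By \cite[Proposition 6.3]{BaMoReRzVi2018}, i.e.\ Lemma \ref{LEx1.6+2}'s ambient result Lemma \ref{LEx2.1} applied inductively, $\ge{(E_1\cdots E_s)}=\ge{E_1}\cdots\ge{E_s}$ since each $E_i$ is cyclotomic. One then needs to identify $E_1\cdots E_s$ with the field $E$ attached to $K$ by \eqref{EcExtended1}: choosing $N$ divisible by each $N_i$ and $n,m$ large enough that $K\subseteq {_n\cicl N{}_m}$ and each $K_i\subseteq {_n\cicl{N_i}{}_m}$, one checks $E={\mc M}K\cap\cicl N{}$ equals ${\mc M}(K_1\cdots K_s)\cap\cicl N{}$, and this contains and is contained in $E_1\cdots E_s$ by comparing Dirichlet character groups: the character group of $K$ is the (internal) product of those of the $K_i$, and intersecting with the cyclotomic part $\cicl N{}$ (equivalently, projecting away the constant-extension part ${\mc M}$) commutes with this product. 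Hence $\ge E=\ge{E_1}\cdots\ge{E_s}$.

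Finally I would combine everything:
\[
\ge E\,K=\ge{E_1}\cdots\ge{E_s}\cdot K_1\cdots K_s=(\ge{E_1}K_1)\cdots(\ge{E_s}K_s)=\ge{K_1}\cdots\ge{K_s}.
\]
It remains to see $\ge{K_1}\cdots\ge{K_s}=\ge K$. The inclusion $\ge K\subseteq \ge E K\subseteq \ge{K_1}\cdots\ge{K_s}$ follows from $\ge K\subseteq\ge E K$ (recorded in the excerpt, from \cite{RzeVil2023}). For the reverse, each $\ge{K_i}$ is, by definition of the extended genus field with respect to $K_i/k$, contained in the extended HCF $K_{i,H^+}$; since $K_i\subseteq K$, one has $K_{i,H^+}\subseteq$ the compositum $K\cdot K_{i,H^+}$, which is unramified over $K$ at the finite primes (ramification indices over $k$ are unchanged on passing from $K_i$ to $K$ at primes where $\ge{K_i}/K_i$ is unramified, and Abhyankar's Lemma, Theorem \ref{T2.1}, controls the rest) and in which the infinite primes split as required, so $K\ge{K_i}\subseteq K_{H^+}$; as each $\ge{K_i}/k$ is abelian, so is their compositum, giving $\ge{K_1}\cdots\ge{K_s}\subseteq \ge K$.

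The main obstacle I anticipate is the bookkeeping in the middle step: verifying cleanly that the field $E$ defined for the composite $K$ by \eqref{EcExtended1} really is the composite of the $E_i$ defined for the cyclic pieces, and in particular that passing to the cyclotomic part ${\mc M}K\cap\cicl N{}$ interacts correctly with the product decomposition of the Dirichlet character group and with the choice of a common modulus $N$ and common $n,m$. The splitting-and-unramified verification for $K\ge{K_i}\subseteq K_{H^+}$ in the last paragraph is routine given Abhyankar's Lemma, but must be stated carefully because $K/K_i$ itself may be ramified at finite primes; what saves it is that those are precisely the primes already accounted for in $K$, so no new ramification appears in $K\ge{K_i}/K$.
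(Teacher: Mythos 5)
Your proposal is correct and follows essentially the same route as the paper: decompose $K$ into cyclic prime-power pieces $K_1\cdots K_s$, apply Theorem \ref{TExtendido5.19} to each, use Lemma \ref{LEx2.1} to get $\ge E=\ge{(E_1)}\cdots\ge{(E_s)}$, and sandwich $\ge K$ between $\ge E K$ and $\ge{(K_1)}\cdots\ge{(K_s)}$. The two points you flag as needing care (identifying $E$ with $E_1\cdots E_s$, and the inclusion $\ge{(K_1)}\cdots\ge{(K_s)}\subseteq\ge K$) are simply asserted in the paper's proof, so your added detail is a legitimate filling-in rather than a divergence.
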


\begin{proof}
Let $K=K_1\cdots K_s$ where each $K_j/k$ is a cyclic
extension of prime power degree. Let $E=E_1\cdots E_s$
with each $E_j$ be given in (\ref{EcExtended1}). Then,
from Lemma \ref{LEx2.1}, we obtain
\[
\ge E=\ge{(E_1)}\cdots \ge{(E_s)}.
\]
Therefore, from 
Theorem \ref{TExtendido5.19}, it follows that 
\[
\ge K\subseteq \ge E K=\ge{(E_1)}K_1\cdots \ge{(E_s)}K_s=
\ge{(K_1)}\cdots \ge{(K_s)}\subseteq \ge K.
\]
Hence $\ge K=\ge E K$.
\end{proof}

We obtain some consequences from Theorem \ref{TEx2.2}. We consider
$K/k$ a geometric abelian finite extension. We have $\ge K=\ge E K$ and
$\g K=\g E^{\H} K$. Let $K\subseteq {_n\cicl N{}_m}$, ${\mathcal M}=
L_n k_m$ and $E={\mc M} K\cap \cicl N{}$. For any finite abelian
extension $L/J$ and any prime $\pK$ of $J$, we denote by $e^*_{\pK}
(L|J)$ to the tame ramification index of the prime $\pK$ in the extension
$L/J$, namely, if $e_{\pK}(L|J)=p^{\beta} \alpha$ with $\gcd(\alpha, p)=1$,
then $e^*_{\pK}(L|J)=\alpha$. The set of tame ramification indexes 
is multiplicative.

\begin{lemma}\label{LEx2.3}
We have $e_{\infty}(E|k)=e^*_{\infty}(K|k)$.
\end{lemma}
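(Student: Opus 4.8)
The identity to establish is $e_{\infty}(E|k)=e^{*}_{\infty}(K|k)$, where $E={\mc M}K\cap\cicl N{}$ with ${\mc M}=L_nk_m$. The guiding idea is that $E$ is the ``cyclotomic part'' of $K$ after stripping off the constant and wildly-ramified-at-infinity pieces, so the ramification of $\p$ in $E$ should coincide with the tame ramification of $\p$ in $K$. First I would recall that $\cicl N{}/k$ is tamely ramified at $\p$ (the infinite prime is tame in every cyclotomic function field), hence $\p$ is tamely ramified in $E/k$ and $e_{\infty}(E|k)=e^{*}_{\infty}(E|k)$. So it suffices to compare the tame ramification of $\p$ in $E$ and in $K$.

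Next I would use the decomposition of the ambient field. Since $K\subseteq{_n\cicl N{}_m}={\mc M}\cicl N{}$, we have $K{\mc M}\subseteq{_n\cicl N{}_m}$, and the field $K{\mc M}$ factors as the compositum of $\cicl N{}$ (the tame cyclotomic piece) with ${\mc M}=L_nk_m$, where in $L_n/k$ the prime $\p$ is totally and wildly ramified (a $p$-extension locally at $\p$), and in $k_m/k$ the prime $\p$ is unramified (it is a constant extension). Therefore $e_{\infty}({\mc M}|k)=e_{\infty}(L_n|k)$ is a power of $p$, and since the tame and wild parts of the ramification of $\p$ in $K{\mc M}/k$ separate multiplicatively, one gets $e^{*}_{\infty}(K{\mc M}|k)=e^{*}_{\infty}(\cicl N{}|k)\cdot 1$ restricted appropriately; more precisely, by Abhyankar's Lemma (Theorem \ref{T2.1}) applied with $K{\mc M}=\cicl N{}\cdot({\mc M}K)$ and the fact that $\cicl N{}/k$ is tame at $\p$, one controls $e_{\infty}(K{\mc M}|k)$ as an lcm of the tame part coming from $\cicl N{}$-side and the $p$-part coming from ${\mc M}$. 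The key point then is that passing from $K{\mc M}$ down to $E={\mc M}K\cap\cicl N{}$ exactly removes the wild ($p$-power) part and the constant part, leaving $e_{\infty}(E|k)=e^{*}_{\infty}({\mc M}K|k)=e^{*}_{\infty}(K|k)$, the last equality because ${\mc M}K/K$ is unramified at the finite primes and at $\p$ contributes only a $p$-power and a constant (hence tame-trivial) factor.

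To make the middle step rigorous I would argue as follows: let $D_\infty$ be the decomposition (inertia) behaviour of $\p$ in ${\mc M}K/k$; the inertia group $I_\infty({\mc M}K/k)$ is a subgroup of $I_\infty(K{\mc M}/k)$, and since $K{\mc M}\subseteq L_n\cicl N{}k_m$ with $\p$ tame in $\cicl N{}$, unramified in $k_m$, and its wild inertia concentrated in the $L_n$-direction, the tame quotient of $I_\infty({\mc M}K/k)$ is a subquotient of $\Gal(\cicl N{}/k)$-side inertia and equals precisely the image of $I_\infty({\mc M}K/k)$ in $\Gal(E/k)$ since $E$ is the maximal subfield of $K{\mc M}$ lying in $\cicl N{}$ that is cut out by $K{\mc M}$. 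Concretely, $\Gal(K{\mc M}/E)$ surjects onto $\Gal(\cicl N{}k_m L_n/\cicl N{})$ which has order prime-to-nothing on the tame side — I would spell this out using that $E$ corresponds, under the identification of abelian extensions with Dirichlet-character/idèle data, to the ``cyclotomic projection'' of $K$. Then $e_{\infty}(E|k)=|I_\infty(E/k)|$ equals the prime-to-$p$ part of $|I_\infty(K/k)|$, which is by definition $e^{*}_{\infty}(K|k)$.

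**Main obstacle.** The delicate point is not any single computation but the bookkeeping that shows $E$ captures exactly the tame-at-$\p$ part of $K$: one must verify that intersecting ${\mc M}K$ with $\cicl N{}$ neither loses any tame ramification of $\p$ that was present in $K$ (it cannot, since tame ramification survives in the cyclotomic direction and $K\subseteq{_n\cicl N{}_m}$) nor introduces any extra (it cannot, since $E\subseteq\cicl N{}$ is tame at $\p$ by construction). I expect the cleanest route is via the character-group / idèle-norm description already set up in Section~\ref{S3-1} together with Abhyankar's Lemma, rather than a purely local computation; the local analysis at $\p$ (as carried out for the individual prime-power cases earlier in the paper) would serve as a sanity check but the general statement is best handled structurally.
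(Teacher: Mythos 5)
Your plan is correct and follows essentially the same route as the paper's proof: one observes that $e^*_{\infty}(J|k)=1$ for every $k\subseteq J\subseteq{\mc M}$ (since $\p$ is wildly ramified in $L_n$ and unramified in $k_m$), that $E\subseteq\cicl N{}$ is tame at $\p$, and then uses multiplicativity of the tame index together with Abhyankar's Lemma to get $e^*_{\infty}(K|k)=e^*_{\infty}(K{\mc M}|{\mc M})=e_{\infty}(E|k)$. The one step you leave heuristic (``intersecting removes exactly the wild and constant parts'') is made precise by the Galois-theoretic identity $K{\mc M}=E{\mc M}$, which follows from $\Gal({_n\cicl N{}_m}/k)\cong\Gal({\mc M}/k)\times\Gal(\cicl N{}/k)$ and gives $e_{\infty}(K{\mc M}|k)=e_{\infty}(K{\mc M}|E)\,e_{\infty}(E|k)$ with $e_{\infty}(K{\mc M}|E)$ a power of $p$.
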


\begin{proof}
First note that if $k\subseteq J\subseteq {\mc M}$ then $e^*_{\infty}({\mc
M}|J)=e^*_{\infty}(J|k)=1$. Now we consider
\[
\xymatrix{
K\ar@{-}[rr]^{e^*_{\infty}=1}\ar@{-}[d]&&K{\mc M}\ar@{-}[d]\\
K\cap {\mc M}\ar@{-}[rr]^{e^*_{\infty}=1}&&{\mc M}
}
\]

Since $e^*_{\infty}(K{\mc M}|K)|e^*_{\infty}({\mc M}|K\cap{\mc M})$,
it follows that $e^*_{\infty}(K{\mc M}|K)=
e^*_{\infty}({\mc M}|K\cap{\mc M})=1$. Therefore $
e^*_{\infty}(K|K\cap{\mc M})=e^*_{\infty}(K{\mc M}|{\mc M})$.

We have
\[
\xymatrix{
E\ar@{-}[rrr]\ar@{-}[dd]&&&K{\mc M}\ar@{-}[dl]\ar@{-}[dd]\\
&&K\ar@{-}[dl]\ar@{-}[dll]\\ 
k\ar@{-}[r]&K\cap {\mc M}\ar@{-}[rr]&&{\mc M}
}
\qquad
\begin{minipage}{6cm}
\begin{align*}
e_{\infty}(K{\mc M}|k)&=e_{\infty}(K{\mc M}|{\mc M})e_{\infty}
({\mc M}|k)\\
&=e_{\infty}(K{\mc M}|E)e_{\infty}(E|k).
\end{align*}
\end{minipage}
\]

It follows
\begin{align*}
e^*_{\infty}(K{\mc M}|k)&=e^*_{\infty}(K{\mc M}|{\mc M})
=e^*_{\infty}(K|K\cap {\mc M})
=e^*_{\infty}(E|k)=e_{\infty}(E|k).
\intertext{Now}
e^*_{\infty}(K|K\cap {\mc M})&=e^*_{\infty}(K|K\cap {\mc M})
e^*_{\infty}(K\cap {\mc M}|k)
=e^*_{\infty}(K|k)=e_{\infty}(E|k).
\end{align*}
\end{proof}

\begin{theorem}\label{TEx2.4}
We have $[\ge K:\g K]=[\ge E:\g E^{\H}]=[\ge E:\g E]\cdot |\H|$.
In particular $[\ge K:\g K]|q-1$.
It also holds 
\[
f_{\infty}(\ge K:\g K)=|\H|,\quad e_{\infty}(\ge K:\g K)=[\ge E:\g E].
\]

Furthermore, the field of constants of $\g K$ is ${\ma F}_{q^{\deg_K\p}}$,
$\deg_K\p=f_{\infty}(K|k)$ and the field of constants of both, $\ge K$
and $K_{H^+}$ is ${\ma F}_{q^{|\H|\deg_K\p}}$ and we have
$|\H|\deg_K\p= f_{\infty}(EK|k)$.
\end{theorem}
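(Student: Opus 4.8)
The plan is to assemble everything from the two identities already in hand, $\ge K=\ge E K$ (Theorem \ref{TEx2.2}) and $\g K=\g E^{\H}K$, by inserting the intermediate field $\g E K$. Since $\g E^{\H}\subseteq\g E\subseteq\ge E$ one has the tower $\g K\subseteq\g E K\subseteq\ge E K=\ge K$, and I would treat its two steps separately. For the lower step I use that $\g E K/\g K$ is a constant field extension of degree $|\H|$ and that the field of constants of $\g K$ is ${\ma F}_{q^{\deg_K\p}}$ (\cite[Theorem 2.2]{BaMoReRzVi2018}); together with $\deg_{\g K}\p=\deg_K\p$ — valid because $\g K/K$ is unramified and $\p$ is totally split in it, $\g K$ lying in $K_H$, so that $\p$ has residue degree $1$ over the constant field of $\g K$ — this gives that $\p$ stays prime in $\g E K/\g K$ with $e_\infty=1$ and $f_\infty=|\H|$, so $\deg_{\g E K}\p=|\H|\deg_K\p$, and that the field of constants of $\g E K$ is ${\ma F}_{q^{|\H|\deg_K\p}}$. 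Moreover $\deg_K\p=f_\infty(K|k)$ by Lemma \ref{LExtended5.19}.

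The crux is the upper step: I claim $\ge E\cap\g E K=\g E$, which gives $\Gal(\ge K/\g E K)\cong\Gal(\ge E/\g E)$ and $[\ge K:\g E K]=[\ge E:\g E]$. To prove the claim I compare ramification at $\p$. On the one hand $\ge E/\g E$ is totally and tamely ramified at $\p$: it is ramified only there (see the proof of Corollary \ref{CExtended4.2}) and $e_\infty(\ge E|k)\mid q-1$. On the other hand $\g E K/\g E$ has ramification index at $\p$ a power of $p$: the constant extension $\g E K/\g K$ and the extension $\g K/K\subseteq K_H$ are unramified everywhere, so $e_\infty(\g E K|k)=e_\infty(K|k)$, while $e_\infty(\g E|k)=e^*_\infty(K|k)$ because $\g E/E$ is unramified at $\p$ (genus field) and $e_\infty(E|k)=e^*_\infty(K|k)$ by Lemma \ref{LEx2.3}; hence $e_\infty(\g E K|\g E)=e_\infty(K|k)/e^*_\infty(K|k)$ is a $p$-power. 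A field lying between $\g E$ and $\ge E\cap\g E K$ is therefore both tamely ramified and $p$-ramified over $\g E$ at $\p$, hence unramified there, yet it is a subextension of the totally ramified $\ge E/\g E$; so it equals $\g E$. Combining the two steps with $[\g E:\g E^{\H}]=|\H|$ ($\g E^{\H}$ being the fixed field of the subgroup $\H\subseteq\Gal(\g E/k)$) yields $[\ge K:\g K]=[\ge E:\g E]\,|\H|=[\ge E:\g E^{\H}]$; and since $\ge E/\g E^{\H}$ is totally ramified at $\p$ (using $\H\subseteq I_\infty(\g E/k)$), this common value equals $e_\infty(\ge E/\g E^{\H})$, which divides $e_\infty(\ge E|k)\mid q-1$.

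The remaining assertions are read off the same tower. By Abhyankar's Lemma (Theorem \ref{T2.1}) applied to $\ge K=\ge E\cdot\g E K$ over $\g E$, using that $\ge E/\g E$ is tame, $e_\infty(\ge K/\g E K)=[\ge E:\g E]=[\ge K:\g E K]$, so $\p$ is totally ramified and $f_\infty=1$ in the upper step; multiplying by the lower step gives $e_\infty(\ge K/\g K)=[\ge E:\g E]$ and $f_\infty(\ge K/\g K)=|\H|$. Since a nontrivial constant subextension of $\ge K/\g E K$ would be unramified at $\p$, which is totally ramified there, the upper step adds no constants, so the field of constants of $\ge K=\ge E K$ equals that of $\g E K$, namely ${\ma F}_{q^{|\H|\deg_K\p}}$; by Remark \ref{REx1.3} this is also the field of constants of $K_{H^+}$. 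Finally, $\p$ is totally split in $\g E/E$ (genus field), hence in $\g E K/EK$, so $\deg_{EK}\p=\deg_{\g E K}\p=|\H|\deg_K\p$, whence $f_\infty(EK|k)=\deg_{EK}\p=|\H|\deg_K\p$. The main obstacle is the identity $\ge E\cap\g E K=\g E$: it is exactly the tame-versus-wild dichotomy of ramification at $\p$ that forces it, and producing the $p$-power ramification of $\g E K/\g E$ requires threading $\g E K/\g K$, $\g K/K$ and $\g E/E$ carefully through Lemma \ref{LEx2.3}.
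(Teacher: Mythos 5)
Your proof is correct and follows essentially the same route as the paper: the tower $\g K\subseteq \g EK\subseteq \ge EK=\ge K$, the constant-field lower step of degree $|\H|$ from \cite[Theorem 2.2]{BaMoReRzVi2018}, and the tame-versus-wild comparison at $\p$ (threaded through Lemma \ref{LEx2.3}) showing the upper step is totally ramified of degree $[\ge E:\g E]$. The only difference is presentational: you make explicit the intersection $\ge E\cap \g EK=\g E$, which the paper leaves implicit when it reads the degree equality $[\ge K:\g K]=[\ge E:\g E^{\H}]$ off its diagram and establishes the total ramification separately via the computations $e^*_{\infty}(\g EK|\g E)=e^*_{\infty}(\ge EK|\ge E)=1$.
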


\begin{proof}
We have 
\[
\xymatrix{
E\ar@{-}[r]\ar@{-}[d]&EK\ar@{-}[d]\\
E^{\H}\ar@{-}[r]\ar@{-}[d]&E^{\H}K\ar@{-}[d]\\
E\cap K\ar@{-}[r]&K
}
\qquad\qquad
\xymatrix{
\ge E\ar@{-}[r]\ar@{-}[d]&\ge K=\ge E K\ar@{-}[d]\\
\g E\ar@{-}[d]\ar@{-}[r]&\g E K\ar@{-}[d]\\
\g E^{\H}\ar@{-}[r]&\g K=\g E^{\H} K
}
\]
It follows that $[\ge K:\g K]=[\ge E:\g E^{\H}]|q-1$.

Now, since $\g E K/(\g E K)^{\H}=\g K$ is an extension of constants
of degree $|\H|$ (\cite[Theorem 2.2]{BaMoReRzVi2018}),
in fact, $|\H|=f_{\infty}(\g EK|\g K)$, we will see that the extension
$\ge K/\g EK$ is totally ramified.
\[
\xymatrix{
\ge E\ar@{-}[rr]^{e^*_{\infty}=1}\ar@{-}[d]_{e_{\infty}=
[\ge E:\g E]}&& \ge E K\ar@{-}[d]\\
\g E\ar@{-}[rr]^{e^*_{\infty}=1}\ar@{-}[dd]&&
\g EK\ar@{-}[dl]^{e_{\infty}=1}\\
&K\ar@{-}[ld]\\k
}
\]

We have $e_{\infty}(\g EK|k)=e_{\infty}(K|k)$. Hence $e^*_{
\infty}(\g EK|\g E)=1$. 
Similarly, we obtain $e^*_{\infty}(\ge EK|\ge E)=1$.

Therefore $e_{\infty}(\ge E K|\g EK)=e_{\infty}(\ge E|\g E)=
[\ge E:\g E]$ and $\ge K/\g EK$ is totally ramified.

Since $\deg_k\p=1$ and $K/k$ is geometric, we obtain that
$f_{\infty}(K|k)=\deg_K\p$. We know that the field of constants
of $\g K$ is ${\ma F}_{q^{\deg_K\p}}$.

Finally, $\g EK/\g E^{\H}K$ is an extension of constants of
degree $|\H|=f_{\infty}(EK|K)$. Hence, the field of constants
of both, $\ge K$ and $K_{H^+}$, is ${\ma F}_{q^{\deg_K\p\cdot
|\H|}}={\ma F}_{q^{f_{\infty}(EK|k)}}$.
\end{proof}

\end{document}